\numberwithin{equation}{section}
\theoremstyle{plain}
\newcommand{\A}{\ensuremath{{\mathbb{A}}}}
\newcommand{\C}{\ensuremath{{\mathbb{C}}}}
\newcommand{\Q}{\ensuremath{{\mathbb{Q}}}}
\newcommand{\F}{\ensuremath{{\mathbb{F}}}}
\newcommand{\E}{\ensuremath{{\mathbb{E}}}}
\newcommand{\B}{\ensuremath{{\mathbb{B}}}}
\newcommand{\charf}{\textbf{1}}
\newcommand{\Vol}{\text{Vol}}
\newcommand{\GL}{\ensuremath{{\text{GL}}}}
\newtheorem{theo}{Theorem}[section]
\newtheorem{lem}[theo]{Lemma}
\newtheorem{prop}[theo]{Proposition}
\newtheorem{cor}[theo]{Corollary}
\theoremstyle{remark}
\newtheorem{rem}[theo]{Remark}
\theoremstyle{definition}
\newtheorem{defn}[theo]{Definition}
\newtheorem*{cor*}{Corollary}
\newcommand{\zxz}[4]{\begin{pmatrix} #1 & #2 \\ #3 & #4 \end{pmatrix}}
\newcommand{\Hom}{\operatorname{Hom}}
\newcommand{\Ind}{\operatorname{Ind}}
\newcommand{\quadchar}{(\frac{\cdot}{q})}
\newcommand{\quadc}[1]{\left(\frac{#1}{q}\right)}
\newcommand{\Tr}{\text{Tr}}
\title{Test vectors for Waldspurger's period integral and application to the mass equidistribution on nonsplit torus}
\begin{document}
\author{Yueke Hu}

\address{MPIM}
\email{huyueke2012@gmail.com}

\begin{abstract}
In this paper we provide local test vector for Waldspurger's period integral, when the level of the representation $\pi_v$ is sufficiently large compared to the level of the character $\Omega_v$ over quadratic extension, while allowing joint ramifications. The test vectors we shall use are variants of classical newforms, and the size of the resulting local integral is asymptotically the inverse of convexity bound for $L(\Pi\otimes\Omega,1/2)$. Such test vectors are used to recover Gross-Prasad type test vectors. We also get vanishing result for local integral when using other test vectors. This phenomenon is used to prove the mass equidistribution of cuspidal newforms on nonsplit torus in depth aspect.
\end{abstract}
\maketitle

\section{introduction}
In \cite{Walds} Waldspurger studied the following period integral
\begin{equation}
\int\limits_{Z_\A\E^*\backslash \A_\E^*}F(e)\Omega(e)de.
\end{equation}
where $F\in \pi$ is a cusp form on $\GL_2$ and $\Omega $ is a character over a nonsplit quadratic extension $\E/\F$ such that $w_\pi\cdot\Omega|_{\A_\F^*}=1$. This integral provides an element in the space $\Hom_{\E^*}(\pi\otimes\Omega,\C)$, which might be trivial according to an epsilon value test by the work of Tunnell and Saito in \cite{Tu83} \cite{Sa}.

In \cite{Walds} Waldspurger established a formula relating this integral to $L(\Pi\otimes\Omega,1/2)$, the special value of twisted base change L-function for $\pi$.
Later on, explicit versions of the formula with level structures have been used to study arithmetic, equidistribution,  and subconvexity problems (see \cite{FW09} \cite{MW09} \cite{MV06} \cite{AP06} \cite{Zhang01}). Most of the work are based on the test vectors studied by Gross and Prasad in \cite{GP91}, where they assumed that $\pi$, $\E$ and $\Omega$ have disjoint ramifications. In a recent paper \cite{FMP13} File, Martin and Pitale gave the local test vector either when locally $\E_v$ is split over $\F_v$, or when $\E_v$ is a field and $c(\Omega_v)$ is sufficiently large compared to $c(\pi_v)$.

In this paper we are mainly interested in providing test vector for nontrivial element in $\Hom_{\E_v^*}(\pi_v\otimes\Omega_v,\C)$ and evaluating local integral in Waldspurger's formula
under the opposite condition for the field extension case, that is when $c(\pi_v)$ is sufficiently large compared to $c(\Omega_v)$, while allowing $\E_v/\F_v$ also to be ramified.

\subsection{Local integrals for different test vectors}
A particular interesting and challenging phenomenon in our setting is that when $\pi_v$ is a supercuspidal representation, the epsilon value test for $\Hom_{\E_v^*}(\pi_v\otimes\Omega_v,\C)$ could fail. For example when $\E_v/\F_v$ is inert and $c(\pi)>2c(\Omega)$, the space is nontrivial if and only if $c(\pi)$ is even. (See Lemma \ref{lem:epsiloninertsc} and \ref{lem:epsilonramifiedsc} for the story.)
One would expect test vectors and resulting local integrals to properly reflect such differences.

We will search for local test vectors coming from the following pool
\begin{equation}\label{eq:pooloftestvec}
\text{(diagonal translates of newform) or twisted newforms (see Definition \ref{def:twistednewform}). } \tag{*}
\end{equation}
One reason to start with such test vectors is that they are more nature from a historical point of view and can be potentially more useful for applications. Also when $\pi_v$ is a supercuspidal representation, elements from (*) form a basis of $\pi_v$, which guarantees that our search will be successful as long as the epsilon value test doesn't fail. 

The local results we get in this paper are actually threefold:
\begin{enumerate}
\item[(i)] Vanishing result for the local integral if we don't choose proper test vectors from the pool (*). See Proposition \ref{prop:vanishinglocalint}  in particular.
\item[(ii)] Nonvanishing results and size of the local integral for proper test vectors from (*). See Proposition \ref{proptestvecinertsc}, \ref{propofexplicitI}, \ref{prop:testvecramifiedsc} for supercuspidal representations and Proposition \ref{prop:inducedrepresentation}, \ref{propIindramified} for induced representations.
\item[(iii)] Using test vector from (ii), we obtain test vectors on which $\E_v^*$ acts by the character $\Omega_v^{-1}$. In addition such test vectors are invariant by principal congruence subgroup of depth roughly $c(\pi_v)/2$. See Corollary  \ref{corinertsc}, \ref{cor:ramifiedsc}, \ref{cor:inertInd} and \ref{cor:ramifiedInd}.
\end{enumerate}

The result in (i) may seem irrelevant for our purpose. But it greatly narrows our choice of test vectors from (*). It also show evidence (though not a proof) on why we will not find test vector when the epsilon value test fails in some cases. Lastly, vanishing and decaying results for local integrals are actually very important to prove power saving for global period integrals. We will give more details in the next subsection.

For the nonvanishing results in (ii), we shall see that when $c(\pi_v)\rightarrow \infty$, the size of the local integral for our test vectors from (*) is asymptotically 
$$ \frac{1}{q^{c(\pi_v)/2}},$$ 
which is exactly the inverse of the convexity bound for $L(\Pi\otimes\Omega,1/2)$. The phenomenon that the local integral for test vectors from (*) is of size $$\frac{1}{\text{convexity bound}}$$ is already observed in \cite{YH14} \cite{YH142} \cite{HMN16} for Rankin-Selberg integral and triple product formula, and applied to subconvexity and equidistribution problems. We see another example of the general phenomenon here. We will not use this to prove subconvexity in our setting, as it is already known for twisted L-function in general. 

We remark here that the author has given detailed descriptions for the local matrix coefficients of supercuspidal representations in \cite{YH14} and \cite{YH142}. These are however not enough to prove the nonvanishing results, which requires more knowledge on the special values of epsilon factors for different twists. So we shall use additional imput from compact induction theory to derive finer structures of supercupsidal representations. In particular we shall use the fact that when $p\neq 2$, the local supercuspidal representations are related to characters over a quadratic extension. As a result, the special value of epsilon factor can be written as a Gauss integral over quadratic extension (see Lemma \ref{thmoflocalepsilon}). So we have assumed that $p\neq 2$ for results in Proposition \ref{proptestvecinertsc}, \ref{propofexplicitI}, \ref{prop:testvecramifiedsc}.

Combining this paper with previous works and assuming $p\neq 2$, the only case where the test vector for Waldspurger's local integral is unknown is when $\pi_v$ is a minimal supercuspidal representation and $c(\pi_v)=c(\pi_{\Omega_v})$, where $\pi_{\Omega_v}$ is the representation of $\GL_2$ associated to $\Omega_v$.

To get (iii) from (ii), one just need to do a weighted average using $\Omega_v$. Actually if the epsilon value test doesn't fail, one can always find a test vector for nontrivial element in $\Hom_{\E_v^*}(\pi_v\otimes\Omega_v,\C)$  such that $\E_v^*$ acts on it by $\Omega_v^{-1}$. What is nontrivial is the additional invariance by a principal congruent subgroup (or its variant, see Definition \ref{defofcongsubgp}) inherited from the test vector in (ii) using Lemma \ref{lem:normalsubK}. In particular if we take $c(\Omega)=0$ and $\E_v$ to be inert, we will recover Gross and Prasad's test vector in our setting. The depth of the principal congruent subgroup is asymptotically $c(\pi_v)/2$, which we believe is best possible.

We note that the local integral for test vector from (iii) is obviously $1$ if the matrix coefficient and the Haar measure are properly normalized.

\subsection{Application to mass equidistribution of cusp forms on nonsplit torus}
The additional power saving in the local integral compared to the convexity bound was already noted in \cite{NPS14} and implicitly shown in \cite{YH142}. It is generalized and applied to prove power saving for the global Rankin-Selberg integral and triple product formula in \cite{HMN16} and used to prove subconvexity bound in a very general setting. 

In this paper we have observed similar behaviors, in particular the vanishing result in (i) for $\pi_v$ of level $c(\pi_v)\geq 2$ (supposing that $w_\pi=1$), and power saving result in Proposition \ref{prop:decaylocalint} when $c(\pi_v)\leq 1$. We shall make a quick application of these results to the mass equidistribution of cusp forms on nonsplit torus in depth aspect.

In particular let $f$ be an automorphic unitary cuspidal newform of finite conductor $N=q^c$ on $\GL_2$, with $L^2$ norm being $1$ and bounded archimedean components. Let $\E^*$ be a fixed nonsplit torus of $\GL_2$. We shall show in Theorem \ref{them:massequi} that the mass measure associated to $f$ is equidistributed on $\E^*$ as $c\rightarrow \infty$, in the sense that for any test function $\varphi$ on $\E^*$,
\begin{equation}
\int\limits_{[\E^*]}|f|^2(e)\varphi(e)de\rightarrow \int\limits_{[\E^*]}\varphi(e)de.
\end{equation}

While similar equidistribution-of-restriction (called quantum ergodic restriction problem by some literatures) results have been established in eigenvalue aspect in various settings as in \cite{DZ13} \cite{TZ13} \cite{MY13}, the author believe that this paper is the first to prove the mass equidistribution of restriction to nonsplit torus in level aspect.

We shall briefly sketch a proof here. To prove the equidistribution result, it will suffice to  test on characters $\Omega$ of $\E^*$. Using spectrum decomposition, we have
\begin{align}
&\int\limits_{[\E^*]}|f|^2(e)\Omega(e)de\\
=&\sum\limits_{\pi}\sum\limits_{\text{basis of cusp forms $\varphi\in\pi$}}<|f|^2,\varphi> \int\limits_{[\E^*]}\varphi(e)\Omega(e)de+<|f|^2,1> \int\limits_{[\E^*]}\Omega(e)de+\int\limits_{E}<|f|^2,E> \int\limits_{[\E^*]}E(e)\Omega(e)de.\notag
\end{align}
We want to show that the main term comes from the constant term $$<|f|^2,1> \int\limits_{[\E^*]}\Omega(e)de.$$ So we need to prove power saving for cuspidal spectrum and continuous spectrum. For simplicity we will just look at the cuspidal spectrum. The vanishing result in (i) allow us to reduce the sum in cuspidal representation $\pi$ to those such that $c(\pi)\ll c(\Omega)$, making the sum much shorter. Then the power saving either comes from the local integral for $\int\limits_{[\E^*]}\varphi(e)\Omega(e)de$ established in this paper, or the power saving in the local integral for $<|f|^2,\varphi>$ established in \cite{HMN16} (and partially noted in \cite{NPS14} and \cite{YH142}).

\subsection{Organization of the paper}
In Section 2 we will review basic notations and results. In particular we will quickly review the basic properties of compact induction theory for supercuspidal representations. 

In Section 3 we will prove vanishing and decaying results for Waldspurger's local integral when we pick improper test vectors from (*).

In Section 4 we will prove the existence of the test vectors from (*) for supercuspidal representations when the epsilon value test doesn't fail. We will also estimate the size of the local integral for such test vectors, and show that Gross-Prasad type test vector exists as a corollary.

In Section 5 we will go through similar process for induced representations. 
The choice of test vector is motivated by \cite{PN16}.

In Section 6 we give a quick application to the mass equidistribution on torus which make use of (i) above and also power saving result in \cite{HMN16}.

The author would like to thank Ameya Pitale for suggesting this problem, and Max-Planck Institute for Mathematics for support as most of the work is done during his visit there.

\section{Notations and previous results}
Let $\F$ be a number field and $\F_v$ be the corresponding local field of $\F$ at a place $v$. Let $O_v$ be the ring of integers of $\F_v$ and $\varpi_v$ be a local uniformizer. Let $q=|\varpi_v|_v^{-1}$.

Let $\E$ be a quadratic extension over $\F$. Suppose that $\E=\F(\sqrt{D})$ for an algebraic integer $D\in \F$. We fix an embedding $\E\hookrightarrow M_2(\F)$ as follows:
\begin{align}\label{quadraticembedding}
t=a+b\sqrt{D}&\mapsto \zxz{a}{b}{bD}{a}.
\end{align}
For simplicity we shall assume that $v(D)=0$ if $\E_v/\F_v$ is inert and $v(D)=1$ if $\E_v/\F_v$ is ramified. See Remark \ref{rem:vD} for what happens in general.

Let $\E_v$ to be the completion of $\E$ with respect to $v$. When $\E_v$ is a field extension over $\F_v$, let $O_{\E,v}$ be its ring of integers, and $\varpi_{\E,v}$ be a local uniformizer of $\E_v$. 
Let $e=e(\E_v/\F_v)$ to be the ramification index of $\E_v/\F_v$.

For an additive character $\psi_v$ over a local field $\F_v$, its level $c(\psi)$ is the least integer such that $\psi_v$ is trivial on $\varpi_v^{c(\psi)}O_v$.
We shall fix $\psi_v $ to be unramified (or level 0), and $\psi_{\E,v}=\psi_v\circ \Tr_{\E_v/\F_v}$. Then $\psi_{\E,v}$ as a function on $\E_v$ is of level $c_{\E_v}(\psi_{\E,v})=ec(\psi_v)-e+1$. In particular $c_{\E_v}(\psi_{\E,v})=0$ if $\E_v$ is an inert extension and $c_{\E_v}(\psi_{\E,v})=-1$ if $\E_v$ is a ramified extension.

For $\chi$ being a multiplicative character of $O_v^*$, its level $c(\chi)$ is the least integer such that $\chi$ is trivial on $1+\varpi_v^{c(\chi)}O_v$. When $\chi$ is trivial on $O_v^*$, we say it's unramified or level 0.
We denote by $\chi_\E$ the character of $O_{\E,v}^*$ defined by $\chi\circ N_{\E_v/\F_v}$. Similarly $c_{\E_v}(\chi_\E)=ec(\chi)-e+1$.

For all characters $\chi$ of $O_F^*$, we extend them to be characters on $\F^*$ by requiring that $\chi(\varpi)=1$ for the fixed uniformizer. For such characters, we denote $\int\limits_{v(x)=-j}\psi(x)\chi(x)d^*x$ simply by $$\int_{-j}\psi\chi.$$

When $2\nmid q$, we denote $\quadchar$ to be the unique nontrivial quadratic character of $O_v^*$.

Let $\lfloor a\rfloor$ be the largest integer which is smaller than $a$. Let $\lceil a\rceil$ be the least integer which is greater than $a$.

When $\E_v/\F_v$ is a field extension, we shall normalize the local Haar measure on $\F_v^*\backslash\E_v^*$ such that $\Vol(\F_v^*\backslash\E_v^*)=1$

\subsection{Theorem of Tunnell-Saito and Waldspurger's formula}\label{Sec:Waldspurger}
Let $\pi$ be an automorphic cuspidal representation of $\GL_2$ over $\F$. 
Let $\B$ be a division algebra over $\F$ and let $\sigma $ be an automorphic representation of $\B^*$ whose Jacquet-Langlands image is $\pi$. Note that we allow $\B=M_2(\F)$ and $\sigma=\pi$. Let $F\in \sigma$. We shall also fix a embedding $\E\hookrightarrow \B$ in general, not to be specified here.
Let $\Omega$ be a Hecke character over $\E$ such that $\Omega|_{\A^*_\F}\cdot w_{\pi}=1$. We consider the following global integral
\begin{equation}
\int\limits_{Z_\A\E^*\backslash \A_\E^*}F(e)\Omega(e)de.
\end{equation}

This period integral actually gives an element in $\Hom_{\A_\E^*}(\sigma\otimes\Omega,\C)$. But it's not necessary that this space is non-zero. The local obstruction for the global period integral to be nonzero is described by an epsilon value test.

Let the Hasse invariant $\epsilon(\B_v)$ of a local quaternion algebra $\B_v$ be $1$ if $\B_v\simeq M_2(\F_v)$, and $-1$ if it's a division algebra. Let $\Pi$ be the base change of $\pi$ to $\E$.
The following theorem is due to Tunnell and Saito (\cite{Sa} \cite{Tu83}).
\begin{theo}\label{Tunnell}
The space $\Hom_{\E_v^*}(\sigma_v\otimes \Omega_v,\C)$ is at most one-dimensional. It is nonzero if and only if 
\begin{equation}
\epsilon(\frac{1}{2},\Pi_{v}\otimes\Omega_v)=\Omega_v(-1)\epsilon(\B_v).
\end{equation}
\end{theo}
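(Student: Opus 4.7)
The plan is to separate the theorem into (a) the multiplicity-one bound and (b) the epsilon dichotomy, and to handle each by representation-theoretic techniques.

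For (a), when $\B_v = M_2(\F_v)$, I would run the Gelfand--Kazhdan method on $(\E_v^*,\E_v^*)$-bi-invariant distributions on $\GL_2(\F_v)$ that transform by $\Omega_v \otimes \Omega_v^{-1}$. The double coset space $\E_v^* \backslash \GL_2(\F_v) / \E_v^*$ is one-parameter with a single closed orbit, and on each orbit a short argument shows there is at most one equivariant distribution: generic orbits give dimension one by extension from the smooth locus, and the closed orbit contributes nothing by a transverse-derivative check. When $\B_v$ is the division algebra, $\sigma_v$ is finite-dimensional and $\E_v^*/\F_v^*$ is compact abelian, so the bound follows directly from Frobenius reciprocity.

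For (b), I would do a case analysis on $\pi_v$ and match each side of the equality. For principal series $\pi_v = \pi(\mu_1,\mu_2)$, Mackey restriction along $B\backslash\GL_2(\F_v)/\E_v^*$ expresses $\pi_v|_{\E_v^*}$ as a sum of induced characters from which the multiplicity of $\Omega_v^{-1}$ can be read off directly, and the result matched with the factorization $\epsilon(1/2,\Pi_v\otimes\Omega_v) = \prod_i \epsilon(1/2,\mu_{i,\E}\Omega_v)$ via Tate's local functional equation. The Steinberg case follows by realizing it as a subquotient of a reducible principal series. For a supercuspidal $\pi_v$ at odd residue characteristic, one realizes $\pi_v = \pi_\theta$ through a regular character $\theta$ of a quadratic extension $\E_v'$; then $\epsilon(1/2,\Pi_v\otimes\Omega_v)$ becomes a Gauss-type integral over $\E_v'$, while $\Hom_{\E_v^*}(\pi_\theta\otimes\Omega_v,\C)$ is computed via Frobenius reciprocity on the compact induction, and a character identity matches the two computations. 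The matching of the sign with $\epsilon(\B_v)$ on the right-hand side uses the Jacquet--Langlands dichotomy: exactly one of $\sigma_v$ on $\GL_2(\F_v)$ or on $\B_v^*$ carries the invariant form, and the sign flip across Jacquet--Langlands is precisely $\epsilon(\B_v)$.

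The hard part is the supercuspidal regime where the defining extension $\E_v'$ coincides with the testing extension $\E_v$, particularly when both are ramified and the characters $\theta$, $\Omega_v$ have overlapping high conductors. In that regime the naive Weil-representation computation degenerates and one must perform a careful analysis of conductors, invoke Tate's functional equation for highly ramified characters, and use Saito's duality identity $\epsilon(1/2,\Pi_v\otimes\Omega_v)\,\epsilon(1/2,\Pi_v\otimes\Omega_v^{-1})^{-1} = \Omega_v(-1)$ to pin down signs. The residue characteristic $p=2$ case is an additional complication that requires Saito's refinement of Tunnell's original odd-$p$ argument, essentially replacing the Weil-representation model with a direct gamma-factor computation on the Kirillov model.
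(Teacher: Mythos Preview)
The paper does not prove this theorem at all: it is stated as a known result and attributed to Tunnell \cite{Tu83} and Saito \cite{Sa}, with no argument supplied. So there is nothing in the paper to compare your proposal against.

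That said, your sketch is a reasonable outline of how the result is established in the literature. Part (a) via Gelfand--Kazhdan on $\E_v^*\backslash \GL_2(\F_v)/\E_v^*$ is standard, and your case analysis in (b) tracks Tunnell's original approach: Mackey theory for principal series, exact sequences for Steinberg, and the Weil-representation/compact-induction model for supercuspidals at odd $p$. Your identification of the hard regime---when the defining extension $\E_v'$ for $\pi_\theta$ coincides with the testing extension $\E_v$, especially in the ramified/high-conductor overlap---is accurate; this is precisely where Tunnell's argument is most delicate. Your remark that $p=2$ requires Saito's refinement is also correct. One caution: the statement that ``generic orbits give dimension one by extension from the smooth locus'' in the distribution argument is doing real work and would need the full Bruhat filtration/Bernstein--Zelevinsky machinery to be made rigorous; likewise, the ``character identity'' matching the Gauss integral to the Frobenius-reciprocity count in the supercuspidal case is not a one-liner and is essentially the heart of Tunnell's paper. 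As written your proposal is a credible roadmap, not a proof, but since the paper itself only cites the result, that is all that is called for here.
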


When there is no local obstruction, Waldspurger proved the following result in \cite{Walds}

\begin{theo}\label{WaldsMCversion}
Let $F_1\in \sigma$ and $F_2\in \hat{\sigma}$. Then
\begin{equation}
\frac{\int\limits_{Z_{\A}\E^{*}\backslash \A_\E^*} F_{1}(e) \Omega(e)de\int\limits_{Z_{\A}\E^{*}\backslash \A_\E^*} F_{2}(e)\Omega^{-1}(e) de}{(F_1,F_2)}=\frac{\zeta(2)L(\Pi\otimes\Omega,1/2)}{2L(\pi,Ad,1)}\prod\limits_{v}P^0_v.
\end{equation}
where 
\begin{equation}\label{formula2.4.3}
P^0_v=\frac{L_v(\pi,Ad,1)L_v(\eta,1)}{\zeta_v(2)L_v(\Pi\otimes\Omega^{-1},1/2)}
  \frac{\int\limits_{\F_v^*\backslash \E_v^*}<\sigma_v(e)F_{1,v},F_{2,v}>\Omega_v(e)de}{<F_{1,v},F_{2,v}>}.
\end{equation}
\end{theo}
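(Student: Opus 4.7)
The plan is to deduce this matrix-coefficient reformulation from Waldspurger's classical L-function identity \cite{Walds} by combining an unfolding of the product of period integrals with the local multiplicity-one theorem. Both sides of the proposed identity are bilinear in $(F_1,F_2)\in\sigma\times\hat\sigma$ and, viewed as functions of $F_{1,v}\otimes F_{2,v}$ at each place, define elements of the Hom-space $\Hom_{\E_v^*}(\sigma_v\otimes\Omega_v,\C)\otimes\Hom_{\E_v^*}(\hat\sigma_v\otimes\Omega_v^{-1},\C)$. By Tunnell--Saito (Theorem \ref{Tunnell}) each tensor factor is at most one-dimensional, so the identity will follow once we match a single global constant.

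For the unfolding, I would start with the double integral
$$\Bigl(\int_{[\E^*]}F_1(e)\Omega(e)de\Bigr)\Bigl(\int_{[\E^*]}F_2(e')\Omega^{-1}(e')de'\Bigr)=\iint_{[\E^*]\times[\E^*]}F_1(e)F_2(e')\Omega(e(e')^{-1})\,de\,de'$$
and perform the substitution $e'=eh$ with $h\in Z_\A\E^*\backslash\A_\E^*$. Recombining the inner $\E^*$-integration against the Petersson integration that is built into the $\hat\sigma$-pairing, using $\E^*(\F)$-automorphy and cuspidality to interchange the integrals, one arrives at an identity of the shape
$$\Bigl(\int F_1\Omega\Bigr)\Bigl(\int F_2\Omega^{-1}\Bigr)=\int_{Z_\A\E^*\backslash\A_\E^*}\langle\sigma(h)F_1,F_2\rangle\,\Omega(h)\,dh.$$
For factorizable test vectors $F_i=\otimes F_{i,v}$, the integrand splits as $(F_1,F_2)\cdot\prod_v\langle\sigma_v(h_v)F_{1,v},F_{2,v}\rangle/\langle F_{1,v},F_{2,v}\rangle$, so the right-hand side factorizes into a product of the local matrix-coefficient integrals over $\F_v^*\backslash\E_v^*$ appearing inside $P_v^0$.

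To identify these factors with the claimed L-ratios, one computes at almost every finite place using Macdonald's formula for the spherical matrix coefficient on $\GL_2$ restricted to the torus: the local integral equals $\zeta_v(2)L_v(\Pi_v\otimes\Omega_v,1/2)L_v(\Pi_v\otimes\Omega_v^{-1},1/2)/(L_v(\pi_v,\mathrm{Ad},1)L_v(\eta_v,1))$ times the normalizing pairing, so $P_v^0=1$ off a finite set and the global L-value $L(\Pi\otimes\Omega,1/2)$ emerges naturally from the product. The remaining global constant $\zeta(2)/(2L(\pi,\mathrm{Ad},1))$ is then pinned down by comparing to Waldspurger's original L-function identity on any one test case (e.g., the Gross--Prasad Eichler-order setup), after which multiplicity one propagates the identity to all $(F_1,F_2)$.

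The main obstacle is the rigorous justification of the unfolding in the second paragraph: the substitution on the double quotient $[\E^*]\times[\E^*]$ and the reinterpretation of the inner period integral as a matrix coefficient require care with the interchange of integrations and the identification of $\int_{[\E^*]}F_1(e)F_2(eh)\,de$ with $\langle\sigma(h)F_1,F_2\rangle$ up to the Petersson pairing, relying essentially on $\E^*(\F)$-automorphy and cuspidality. Once this step is in place, the remaining matching at unramified places and the determination of the global constant via Waldspurger's original formula are standard.
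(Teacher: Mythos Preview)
The paper does not prove this theorem at all: it is quoted as a result of Waldspurger \cite{Walds} and simply cited, so there is no ``paper's own proof'' to compare against. Your write-up is therefore a proposed proof of a theorem the paper takes for granted.

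On the substance of your sketch, the unfolding step in your second paragraph contains a genuine error, not merely a technicality. After the substitution $e'=eh$ you obtain
\[
\int_{Z_\A\E^*\backslash\A_\E^*}\Bigl(\int_{[\E^*]}F_1(e)F_2(eh)\,de\Bigr)\Omega(h^{-1})\,dh,
\]
and the inner integral is a \emph{torus} period of the product $F_1\cdot(\sigma(h)F_2)$, not the Petersson matrix coefficient $\langle\sigma(h)F_1,F_2\rangle$, which is an integral over $Z_\A G(\F)\backslash G(\A)$ (or $[\B^*]$). There is no mechanism by which ``$\E^*(\F)$-automorphy and cuspidality'' converts a compact torus integral into a full-group Petersson integral; these are simply different linear functionals on $\sigma\otimes\hat\sigma$. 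Consequently the factorization you write down does not follow, and the appearance of $L(\pi,\mathrm{Ad},1)$ in the formula --- which genuinely reflects the comparison between the global Petersson pairing and a product of local Whittaker/matrix-coefficient pairings --- has no source in your argument. Waldspurger's original proof proceeds via theta correspondence and the Shimura--Waldspurger lift, and the matrix-coefficient reformulation you are aiming for (closer in spirit to Ichino--Ikeda or Yuan--Zhang--Zhang) requires substantially more than an unfolding plus multiplicity one: one must separately establish the Petersson-to-local-pairing comparison that produces the adjoint $L$-value, and then match the period side to the $L$-function side.
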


We will mainly be interested in the case when $\B=M_2(\F)$.

In the following we shall denote $I(\Phi_v,\Omega_v)$ to be the local integral in Waldspurger's formula:
\begin{equation}
I(\Phi_v,\Omega_v)=\int\limits_{\F_v^*\backslash\E_v^*}\Phi_v(e)\Omega_v(e)de=0,
\end{equation}
where $\Phi_v$ is the matrix coefficient associated to elements of $\pi_v$ (to be specified later on) and always normalized such that $\Phi_v(1)=1$.

\subsection{Local integrals}
From this section on we will mainly work locally so we shall omit most of sub-index $v$ without confusion.
\begin{lem} \label{Iwasawadecomp}
For every positive integer $c$,
$$\GL_2(\F )=\coprod\limits_{0\leq i\leq c} B\zxz{1}{0}{\varpi ^i}{1}K_0(\varpi ^c).$$
Here $B$ is the Borel subgroup of $\GL_2$. 
\end{lem}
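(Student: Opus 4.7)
The claim is an Iwasawa--Bruhat type decomposition for $\GL_2(\F)$ relative to the congruence subgroup $K_0(\varpi^c)$. My plan is to reduce it to a finite orbit analysis on $\P^1(O/\varpi^c)$. The classical Iwasawa decomposition $\GL_2(\F) = B\cdot\GL_2(O)$ reduces the problem to showing
\[
\GL_2(O)=\coprod_{0\le i\le c} B(O)\,w_i\,K_0(\varpi^c),
\]
where $w_i = \zxz{1}{0}{\varpi^i}{1}$ and $B(O):=B\cap\GL_2(O)$ is the upper triangular subgroup of $\GL_2(O)$. Left multiplication by $B$ then recovers the statement, and disjointness is preserved because any $g\in\GL_2(O)$ factored as $b\,w_i\,k$ with $k\in K_0(\varpi^c)$ forces $b\in\GL_2(O)\cap B = B(O)$.

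Next, identify $\GL_2(O)/K_0(\varpi^c)$ with $\P^1(O/\varpi^c)$ via the first-column map $g\mapsto g\cdot e_1 \bmod \varpi^c$. This is the standard bijection $\GL_2(O/\varpi^c)/B(O/\varpi^c) \cong \P^1(O/\varpi^c)$ obtained by reducing modulo $\varpi^c$; it is well defined because $K_0(\varpi^c)$ is the preimage of the upper triangular Borel of $\GL_2(O/\varpi^c)$. Under this identification, the residual left action of $B(O)$ on cosets becomes its standard action on $\P^1(O/\varpi^c)$.

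Finally, classify the $B(O)$-orbits on $\P^1(O/\varpi^c)$ directly. Every primitive class admits a representative of one of two forms: $[x:1]$ with $x\in O/\varpi^c$ (when the second coordinate is a unit), or $[1:b]$ with $b\in\varpi O/\varpi^c$ (otherwise). In the first case, $\zxz{\alpha}{\beta}{0}{\delta}\cdot[x:1]^T = [(\alpha x+\beta)/\delta : 1]$ sweeps out the entire affine patch as $(\alpha,\beta,\delta)$ range over $O^*\times O\times O^*$, giving a single orbit. In the second case, $\zxz{\alpha}{\beta}{0}{\delta}\cdot[1:b]^T = [1 : \delta b/(\alpha+\beta b)]$ since $\alpha+\beta b\in O^*$ (as $\alpha\in O^*$ and $\beta b\in\varpi O$), so $v(b)\in\{1,\ldots,c\}$, interpreted as $c$ when $b\equiv 0$, is an orbit invariant, and every unit multiple of $\varpi^{v(b)}$ is realized. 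Altogether this yields $c+1$ orbits with representatives $[1:\varpi^i]$ for $0\le i\le c$, which are precisely the first columns of the $w_i$. The main---though mild---technical point is verifying that $v(b)$ is both invariant and completely attained in the second case; the remainder is bookkeeping.
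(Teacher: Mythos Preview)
Your argument is correct. The paper states this lemma without proof, treating it as a standard fact; your approach---reducing via the classical Iwasawa decomposition to $\GL_2(O_v)$, identifying $\GL_2(O_v)/K_0(\varpi^c)\cong\P^1(O_v/\varpi^c)$, and then classifying the $B(O_v)$-orbits by the valuation of the second projective coordinate---supplies a clean and complete justification.
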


\begin{defn}\label{defofcongsubgp}
For $k\geq 1$, let $K_0(\varpi^k)$ denote following the compact subgroup of $\GL_2(O_v)$
$$\{g\in \zxz{O_v^*}{O_v}{\varpi^k O_v}{ O_v^*} \}.$$ 
Let $K_1^1(\varpi ^k,\varpi ^k)$ denote the compact subgroup
$$\{g\in \zxz{1+\varpi^k O_v}{\varpi^k O_v}{\varpi^k O_v}{1+\varpi^k O_v} \}.$$ 
Let $K_1^1(\varpi ^{k+1},\varpi ^k)$ denote the compact subgroup
$$\{g\in \zxz{1+\varpi^k O_v}{\varpi^k O_v}{\varpi^{k+1} O_v}{1+\varpi^k O_v} \}.$$
\end{defn}

\begin{lem}\label{lem:normalsubK}
If $t\in \E^*$ is an element from inert quadratic extension where $v(D)=0$, then
$$ t^{-1}K_1^1(\varpi ^k,\varpi ^k)t=K_1^1(\varpi ^k,\varpi ^k).$$
Similarly if $t\in \E^*$ is an element from ramified quadratic extension where $v(D)=1$, then
$$ t^{-1}K_1^1(\varpi ^{k+1},\varpi ^k)t=K_1^1(\varpi  ^{k+1},\varpi ^k).$$

\end{lem}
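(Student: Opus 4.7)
The center $\F^*$ acts trivially by conjugation and $\E^*$ is closed under inversion, so it will suffice to establish the containment $t^{-1} K t \subseteq K$ for $t$ running through representatives of $\F^* \backslash \E^*$ (the reverse inclusion follows by replacing $t$ with $t^{-1}$). I will rewrite the two subgroups in the convenient form
\begin{align*}
K_1^1(\varpi^k,\varpi^k) &= I + \varpi^k M_2(O_v),\\
K_1^1(\varpi^{k+1},\varpi^k) &= I + \varpi^k R, \qquad R = \begin{pmatrix} O_v & O_v \\ \varpi O_v & O_v \end{pmatrix},
\end{align*}
so that the task reduces to showing that the embedded $\E^*$ normalizes the $O_v$-order $M_2(O_v)$ in the inert case, respectively the Eichler order $R$ in the ramified case.

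In the inert case, a uniformizer of $\F$ remains a uniformizer of $\E$, so every class in $\F^* \backslash \E^*$ has a representative in $O_{\E}^*$. The embedding \eqref{quadraticembedding} with $v(D)=0$ sends such a representative into $\GL_2(O_v)$, which visibly normalizes $M_2(O_v)$; this case is immediate.

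In the ramified case, $\sqrt{D}$ is a uniformizer of $\E$, so $\F^* \backslash \E^*$ is represented by $O_{\E}^* \cup \sqrt{D}\, O_{\E}^*$. For $t = a + b\sqrt{D} \in O_{\E}^*$ one must have $a \in O_v^*$ (otherwise the norm $a^2 - b^2 D$ would have odd positive valuation), and then a direct entry-by-entry check of $t^{-1} Y t$ for $Y \in R$, using $v(bD) \geq 1$, keeps each entry in its prescribed valuation range. For the Atkin-Lehner type element $\sqrt{D} = \zxz{0}{1}{D}{0}$ one computes
\[ \sqrt{D}^{-1}\zxz{\alpha}{\beta}{\gamma}{\delta}\sqrt{D} = \zxz{\delta}{\gamma/D}{\beta D}{\alpha}, \]
and observes that $R$ is sent to itself precisely because the asymmetry in its definition is tuned to $v(D)=1$: the condition $\gamma \in \varpi O_v$ absorbs the division by $D$ in the top-right entry, while multiplication by $D$ pushes $\beta \in O_v$ into $\varpi O_v$ in the bottom-left entry. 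This calibration is the only real content of the lemma and, in that sense, the main obstacle --- though it amounts to a short direct verification.
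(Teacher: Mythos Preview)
The paper states this lemma without proof, so there is nothing to compare your argument against. Your proof is correct: rewriting the two subgroups as $I+\varpi^k M_2(O_v)$ and $I+\varpi^k R$ reduces the question to checking that the image of $\E^*$ normalizes the relevant $O_v$-order, and your case split (units of $O_\E$ in both cases, plus the Atkin--Lehner element $\sqrt{D}$ in the ramified case) covers everything. One tiny quibble: your parenthetical justification that $a\in O_v^*$ for $t=a+b\sqrt{D}\in O_\E^*$ in the ramified case (``otherwise the norm would have odd positive valuation'') only literally treats the case $v(a)\geq 1$, $v(b)=0$; when $v(a),v(b)\geq 1$ the norm has \emph{even} positive valuation, but is still not a unit, so the conclusion stands.
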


\begin{lem}\label{lemofGaussint}
Let $m\in\F $ such that $v(m)=-j<0$, and $\mu$ be a character of $O_{v}^*$ of level $k>0$. Then
\begin{equation}
  |\int\limits_{v(x)=0}\psi (mx)\mu^{-1}(x)d^*x|=\begin{cases}
                                                   \sqrt{\frac{q}{(q-1)^2q^{k-1}}},&\text{\ if\ }j=k;\\
                                                   0,&\text{\ otherwise.}
                                                  \end{cases}
\end{equation}
\end{lem}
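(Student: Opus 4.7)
The plan is to split into three cases according to the relation between $j=-v(m)$ and $k=c(\mu)$. The cases $j\neq k$ both produce vanishing integrals by suitable symmetries, while the critical case $j=k$ reduces to a classical Gauss sum computation. Throughout I would work with the normalization $\Vol(O_v^*)=1$, so that $\Vol(1+\varpi_v^k O_v) = \frac{1}{(q-1)q^{k-1}}$.

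For $j<k$, I would exploit that $\mu$ has level exactly $k$ by choosing $u \in \varpi_v^{k-1} O_v$ with $\mu(1+u)\neq 1$, and performing the multiplicative substitution $x\mapsto x(1+u)$, which preserves $d^*x$. Since $v(mxu)\geq -j+(k-1)\geq 0$ and $\psi$ is unramified, one has $\psi(mx(1+u))=\psi(mx)$; meanwhile $\mu^{-1}(x(1+u))=\mu^{-1}(1+u)\mu^{-1}(x)$ introduces a nontrivial overall scalar factor, forcing the integral to equal a nontrivial multiple of itself, hence to vanish. For $j>k$, I would instead decompose $O_v^*$ into cosets of $1+\varpi_v^k O_v$, on each of which $\mu^{-1}$ is constant. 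On a coset $x_0(1+\varpi_v^k O_v)$ the inner integral becomes (after substituting $y=1+u$ with $u\in\varpi_v^k O_v$) a multiple of $\int_{\varpi_v^k O_v}\psi(mx_0 u)\,du$; since $v(mx_0)+k=k-j<0$, the map $u\mapsto \psi(mx_0 u)$ is a nontrivial additive character on the compact group $\varpi_v^k O_v$, and the integral vanishes.

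For the critical case $j=k$, the same coset decomposition applies, but now $\psi(mx_0 u)=1$ on $\varpi_v^k O_v$, so each inner integral simplifies to $\Vol(1+\varpi_v^k O_v)\cdot \psi(mx_0)$. The total becomes
\[
\Vol(1+\varpi_v^k O_v)\cdot \sum_{x_0\in (O_v/\varpi_v^k O_v)^*}\mu^{-1}(x_0)\,\psi(mx_0),
\]
which is a primitive Gauss sum $G(\mu,m)$. The standard orthogonality argument (compute $|G(\mu,m)|^2$ and use that $\mu$ has \emph{exact} conductor $\varpi_v^k$, so shifts by $1+\varpi_v^{k-1}O_v$ cancel cross terms) gives $|G(\mu,m)|=q^{k/2}$. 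Combining with the coset volume $\frac{1}{(q-1)q^{k-1}}$ yields the claimed
\[
\bigl|\tfrac{q^{k/2}}{(q-1)q^{k-1}}\bigr| \;=\; \sqrt{\tfrac{q}{(q-1)^2 q^{k-1}}}.
\]

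The only genuinely non-formal step is the primitive Gauss sum modulus $|G(\mu,m)|=q^{k/2}$; once granted, the remainder is bookkeeping with measures. I expect the main care required is tracking the multiplicative Haar normalization consistently between the three cases, and verifying that the chosen $u$ in case $j<k$ truly satisfies $\mu(1+u)\neq 1$ (which is exactly the defining property of $\mu$ having level $k$ rather than $\leq k-1$).
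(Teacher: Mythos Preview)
Your proof is correct; the paper itself states this lemma without proof, treating it as a standard Gauss sum fact. Your three-case argument (multiplicative shift for $j<k$, additive character orthogonality on cosets for $j>k$, reduction to the primitive Gauss sum modulus for $j=k$) is exactly the classical one, and the measure normalizations are tracked correctly.
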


\begin{lem}
Let $\chi$ be a multiplicative character of $O_v^*$ of level $j$ and $v(b)\geq 0$. Suppose that $p\neq 2$. Then $\chi(\frac{b^2D}{1-b^2D})$ is still of level $j$ in $b$, unless $\chi=\quadchar$ is the unique character of order 2; 
\end{lem}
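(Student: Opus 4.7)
The plan is to reduce everything to an explicit expansion of the ratio $f(b(1+\delta))/f(b)$ and then exploit $p\neq 2$ to control its leading term. Write $f(b)=\frac{b^2D}{1-b^2D}$ and $u=f(b)$, so that $1+u=(1-b^2D)^{-1}$. A direct manipulation gives, for any $\delta\in\varpi O_v$,
\[
\frac{f(b(1+\delta))}{f(b)} \;=\; \frac{(1+\delta)^2}{1-u(2\delta+\delta^2)}.
\]
All subsequent steps are a careful analysis of this identity modulo powers of $\varpi$.

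For the upper bound ``level $\leq j$'', I take $v(\delta)\geq j$. Since $p\neq 2$ we have $v(2\delta)=v(\delta)\geq j$. Moreover $v(u)\geq 0$, because the conventions $v(D)\in\{0,1\}$ together with $v(b)\geq 0$ force $1-b^2D$ to be a unit in both cases: in the inert case $D$ is a non-square mod $\varpi$ so $b^2D\not\equiv 1\pmod{\varpi}$, and in the ramified case $v(b^2D)\geq 1$ makes it automatic. Hence both factors of the ratio lie in $1+\varpi^j O_v$, and $\chi$ of the ratio equals $1$.

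For the lower bound ``level $\geq j$'' with $j\geq 2$, I take $v(\delta)=j-1$ and expand modulo $\varpi^j$, discarding the $\delta^2$ terms, to obtain
\[
\frac{(1+\delta)^2}{1-u(2\delta+\delta^2)} \;\equiv\; 1 + 2\delta(1+u) \;=\; 1 + \frac{2\delta}{1-b^2D}\pmod{\varpi^j}.
\]
Because $\frac{2}{1-b^2D}$ is a unit, $\delta\mapsto\frac{2\delta}{1-b^2D}$ is a valuation-preserving bijection on $\varpi^{j-1}O_v$. By the definition of $c(\chi)=j$ there exists $x\in\varpi^{j-1}O_v$ with $\chi(1+x)\neq 1$, and the bijection lets me realize this $x$ through a suitable choice of $\delta$, so level $\geq j$.

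The exclusion of $\chi=\quadchar$ arises precisely because $(1+\delta)^2$ is always a square, so $\chi((1+\delta)^2)=1$ and the numerator drops out of the analysis. In the ramified setting one further has $\chi(1-b^2D)=1$ since $1-b^2D\in 1+\varpi O_v$ and $\quadchar$ is trivial there; combined with $\chi(b^2)=1$, this gives $\chi(f(b))=\chi(D)$ constant in $b$, so the level collapses to $0$. The main obstacle I anticipate is the borderline case $j=1$, where the $\delta^2$ terms cannot be discarded modulo $\varpi^j$; the cleanest way around this is to rewrite $\chi(f(b))=\chi^2(b)\chi(D)\chi(1-b^2D)^{-1}$ and observe that $\chi^2$ is non-trivial precisely when $\chi\neq\quadchar$, so the factor $\chi^2(b)$ by itself already witnesses that $\chi\circ f$ has level one in $b$.
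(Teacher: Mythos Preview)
The paper states this lemma without proof, so there is nothing to compare against directly; your argument is the natural one and is correct for $j\geq 2$. The identity
\[
\frac{f(b(1+\delta))}{f(b)}=\frac{(1+\delta)^2}{1-u(2\delta+\delta^2)},\qquad u=\frac{b^2D}{1-b^2D},
\]
together with $p\neq 2$ and the fact that $1-b^2D\in O_v^*$ (valid in both the inert and ramified cases under the paper's conventions) gives exactly the right control, and the expansion $1+2\delta(1+u)\pmod{\varpi^j}$ cleanly pins down the level.

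There is, however, a gap in the $j=1$ step. Your claim that ``the factor $\chi^2(b)$ by itself already witnesses level one'' is only justified when $\chi(1-b^2D)=1$, i.e.\ when $v(b^2D)\geq 1$. This covers the ramified case and the inert case with $v(b)>0$, where indeed $\chi(f(b))=\chi^2(b)\chi(D)$ and your conclusion is immediate. But in the inert case with $v(b)=0$ the factor $\chi(1-b^2D)^{-1}$ genuinely depends on $b$, and nontriviality of $\chi^2$ does not by itself rule out cancellation. One way to close this is a counting argument: the map $s\mapsto s/(1-sD)$ is a bijection on $k_v^*$, so if $\chi(f(b))$ were constant then all $(q-1)/2$ nonzero squares would land in a single fibre of $\chi$, forcing $\mathrm{ord}(\chi)\leq 2$, contrary to $\chi\neq 1,\quadchar$.

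That said, this edge case is never invoked in the paper: in the proof of Proposition~\ref{proptestvecinertsc} the sum where the lemma is applied runs over $b\in\varpi O_\F/\varpi^k O_\F$, so $v(b)>0$ throughout, and your argument as written already suffices there.
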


\begin{defn}\label{lemofstructureofchar}
Let $\chi$ be a character of $O_v^*$ such that $c(\chi)\geq 2$. Then there exists a unit $\alpha_\chi$ associated to $\chi$ such that
\begin{equation}
\chi(1+x)=\psi (\frac{\alpha_\chi}{\varpi ^{c(\chi)}}x)
\end{equation}
for any $x\in \varpi ^{\lceil \frac{c(\chi)}{2}\rceil}O_v$.
\end{defn}

\begin{lem}\label{lem:surjective}
$\alpha_\chi$ is unique up to $\varpi ^{\lfloor \frac{c(\chi)}{2} \rfloor}$. The map 
\begin{align}
\{\text{level $k$ characters}\}&\rightarrow  O_v^*/1+\varpi ^{\lfloor \frac{k}{2} \rfloor}O_v\\
\chi&\mapsto \alpha_\chi\notag
\end{align}
is surjective.
\end{lem}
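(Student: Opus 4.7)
The plan is to handle uniqueness first and then surjectivity, and along the way to verify that $\alpha_\chi$ is automatically a unit so that the multiplicative quotient in the target makes sense. For uniqueness, suppose $\alpha,\alpha'$ both satisfy the defining relation of Definition \ref{lemofstructureofchar}. Then for every $x\in\varpi^{\lceil k/2\rceil}O_v$ we have $\psi((\alpha-\alpha')x/\varpi^k)=1$; since $\psi$ is of level $0$, this forces $(\alpha-\alpha')\varpi^{\lceil k/2\rceil-k}O_v\subset O_v$, i.e.\ $\alpha-\alpha'\in\varpi^{k-\lceil k/2\rceil}O_v=\varpi^{\lfloor k/2\rfloor}O_v$. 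To see that $\alpha_\chi$ is a unit, note that for $k\geq 2$ the element $\varpi^{k-1}$ lies in the allowed range $\varpi^{\lceil k/2\rceil}O_v$, so the defining relation gives $\chi(1+\varpi^{k-1})=\psi(\alpha_\chi/\varpi)$, which must be nontrivial since $\chi$ has level exactly $k$; hence $v(\alpha_\chi)=0$. For units, the additive congruence $\alpha-\alpha'\in\varpi^{\lfloor k/2\rfloor}O_v$ is equivalent to the multiplicative congruence $\alpha/\alpha'\in 1+\varpi^{\lfloor k/2\rfloor}O_v$, which is the statement of the lemma.

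For surjectivity, given $\alpha\in O_v^*$, I would first define $\chi$ on the small subgroup $1+\varpi^{\lceil k/2\rceil}O_v$ by $\chi(1+x)=\psi(\alpha x/\varpi^k)$, and then extend to a character of $O_v^*$. The first step is well-defined as a homomorphism because expanding $(1+x)(1+y)=1+(x+y)+xy$, the cross term satisfies $xy\in\varpi^{2\lceil k/2\rceil}O_v\subseteq\varpi^k O_v$, so $\psi(\alpha xy/\varpi^k)=1$, and the exponential of the additive piece $x+y$ gives the multiplicativity. For the second step I would invoke the standard fact that any character of a subgroup of a finite abelian group extends to the whole group: here the ambient group is $O_v^*/(1+\varpi^k O_v)$ and the subgroup is the image of $1+\varpi^{\lceil k/2\rceil}O_v$, and extension is immediate by Pontryagin duality (restriction is the dual of the inclusion, and duality is exact on finite abelian groups).

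It remains to verify that the extended $\chi$ has level exactly $k$, and that its associated constant is $\alpha$ modulo $\varpi^{\lfloor k/2\rfloor}$. Triviality on $1+\varpi^k O_v$ holds by construction, and $\chi(1+\varpi^{k-1})=\psi(\alpha/\varpi)\neq 1$ since $\alpha$ is a unit and $\psi$ has level $0$, so the level is exactly $k$; the associated constant equals $\alpha$ by construction, and the uniqueness part then tells us this agrees with $\alpha_\chi$ modulo $\varpi^{\lfloor k/2\rfloor}$. I expect the main subtlety to be the well-definedness of $\chi$ on $1+\varpi^{\lceil k/2\rceil}O_v$: the ceiling $\lceil k/2\rceil$ is chosen precisely so the cross term $xy$ lies in $\varpi^k O_v$ and is killed by $\psi/\varpi^k$, so this is the balancing point of the construction. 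Everything else is routine once this step is in place.
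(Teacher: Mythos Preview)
Your argument is essentially correct and takes a different route from the paper. The paper observes that $\alpha_{\chi_1}\equiv\alpha_{\chi_2}\pmod{\varpi^{\lfloor k/2\rfloor}}$ if and only if $\chi_1\chi_2^{-1}$ has level at most $\lceil k/2\rceil$, and then deduces surjectivity by a counting argument: the number of level-$k$ characters divided by the common fiber size equals $|O_v^*/(1+\varpi^{\lfloor k/2\rfloor}O_v)|$, so the image fills the target. Your approach is instead constructive: you build a preimage of a given $\alpha$ directly by defining $\chi$ on $1+\varpi^{\lceil k/2\rceil}O_v$ via the formula and extending to $O_v^*$ by Pontryagin duality for finite abelian groups. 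Both arguments work; the counting is shorter once one accepts the fiber description, while your construction produces an explicit preimage and makes transparent why the ceiling $\lceil k/2\rceil$ is exactly the threshold at which the cross term $xy$ falls into $\varpi^k O_v$.

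One small imprecision: the claim that $\psi(\alpha/\varpi)\neq 1$ for \emph{every} unit $\alpha$ is false when the residue field has more than $p$ elements, since the restriction of $\psi$ to $\varpi^{-1}O_v/O_v\cong\F_q$ is a nontrivial additive character of $\F_q$ and hence has a kernel of size $q/p$. The fix is immediate: there exists \emph{some} $u\in O_v^*$ with $\psi(\alpha u/\varpi)\neq 1$, so $\chi(1+u\varpi^{k-1})\neq 1$, and this already forces the level to be exactly $k$. The same remark applies to your verification that $\alpha_\chi$ is a unit, though strictly speaking this is already part of Definition~\ref{lemofstructureofchar} and need not be reproved.
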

\begin{proof} $\alpha_{\chi_1}\equiv \alpha_{\chi_2}$ if and only if $\chi_1\chi_2^{-1}$ of level $\leq\lceil \frac{c(\chi)}{2}\rceil$. Then one just need to do a simple counting.
\end{proof}

\begin{lem}\label{lemoftwocharGaussint}
Let $\chi$, $\nu$ be two multiplicative characters of $\F^*$ extended from $O_v^*$, such that $c(\chi)\geq 2c(\nu)$. Let $\alpha_\chi$ be the constant associated to $\chi$ as above. Then
\begin{equation}
\int_{-c(\chi)}\chi\nu\psi=\nu(-\frac{\alpha_\chi}{\varpi^{c(\chi)}})\int_{-c(\chi)}\chi\psi.
\end{equation}
\end{lem}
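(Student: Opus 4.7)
The plan is to localize the integral to a computation on $O_v^\ast$ via the substitution $x = u\varpi^{-c(\chi)}$, then partition $O_v^\ast$ multiplicatively into cosets of $1+\varpi^{\lceil c(\chi)/2\rceil}O_v$ on which the defining formula for $\alpha_\chi$ applies. The assumption $c(\chi)\geq 2c(\nu)$ will be used precisely to trivialize $\nu$ on this small subgroup and to ensure that $\nu(-\alpha_\chi)$ is well-defined.

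First I would use $\chi(\varpi)=\nu(\varpi)=1$ together with the substitution $x=u\varpi^{-c(\chi)}$ to rewrite
\[
\int_{-c(\chi)}\chi\nu\psi \;=\; \int_{u\in O_v^\ast}\chi(u)\nu(u)\psi\!\left(\tfrac{u}{\varpi^{c(\chi)}}\right)d^\ast u,
\]
and similarly for $\int_{-c(\chi)}\chi\psi$. Next I would write each $u\in O_v^\ast$ uniquely as $u=u_0(1+y)$ with $y\in\varpi^{\lceil c(\chi)/2\rceil}O_v$ and $u_0$ ranging over a set of representatives of $O_v^\ast/(1+\varpi^{\lceil c(\chi)/2\rceil}O_v)$. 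Since $c(\nu)\leq\lfloor c(\chi)/2\rfloor\leq\lceil c(\chi)/2\rceil$, we have $\nu(1+y)=1$; using Definition \ref{lemofstructureofchar} we get $\chi(1+y)=\psi(\alpha_\chi y/\varpi^{c(\chi)})$; and expanding $\psi(u/\varpi^{c(\chi)})=\psi(u_0/\varpi^{c(\chi)})\psi(u_0y/\varpi^{c(\chi)})$ gives an integrand of the form
\[
\chi(u_0)\nu(u_0)\psi\!\left(\tfrac{u_0}{\varpi^{c(\chi)}}\right)\cdot\psi\!\left(\tfrac{(\alpha_\chi+u_0)y}{\varpi^{c(\chi)}}\right).
\]

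Integrating the inner factor in $y\in\varpi^{\lceil c(\chi)/2\rceil}O_v$ via orthogonality of the additive character forces the vanishing of every term except those with $u_0\equiv-\alpha_\chi\pmod{\varpi^{\lfloor c(\chi)/2\rfloor}}$. The same decomposition applied to $\int_{-c(\chi)}\chi\psi$ produces an identical sum but without the factor $\nu(u_0)$. Since $c(\nu)\leq\lfloor c(\chi)/2\rfloor$, for every surviving $u_0$ one has $\nu(u_0)=\nu(-\alpha_\chi)$, so this common factor pulls out of the sum. Combined with $\nu(\varpi)=1$ this gives $\nu(-\alpha_\chi)=\nu(-\alpha_\chi/\varpi^{c(\chi)})$, yielding the claimed identity.

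The only real subtlety (and thus the main thing to verify carefully) is the bookkeeping between $\lceil c(\chi)/2\rceil$ and $\lfloor c(\chi)/2\rfloor$ when $c(\chi)$ is odd, and the observation that the inequality $c(\chi)\geq 2c(\nu)$ indeed forces $c(\nu)\leq\lfloor c(\chi)/2\rfloor$, which is what makes $\nu(-\alpha_\chi)$ independent of the ambiguity in $\alpha_\chi$ (which per Lemma \ref{lem:surjective} is only defined modulo $\varpi^{\lfloor c(\chi)/2\rfloor}$). Once this compatibility is noted, the computation reduces to elementary orthogonality and the identity follows.
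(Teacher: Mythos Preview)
Your argument is correct and is precisely the $p$-adic stationary phase computation the paper alludes to with its one-line proof ``Use stationary phase analysis.'' You have simply written out the details of that method, including the careful $\lceil\cdot\rceil/\lfloor\cdot\rfloor$ bookkeeping, so there is nothing to add.
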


\begin{proof}
Use stationary phase analysis.
\end{proof}

\begin{lem}\label{lemofchiE}
Let $\E /\F $ be a quadratic field extension with  ramification index $e$.
Let $\chi$ be a character of $O_v^*$ of level $c(\chi)\geq 2$ associated to a constant $\alpha_\chi$.  Recall that $\chi_\E$ is of level $ec(\chi)-e+1$, $\psi_{\E }$ is of level $ec(\psi)-e+1=1-e$. Then
\begin{equation}
\chi_E(1+x)=\psi_{\E }(\frac{\alpha_\chi}{\varpi ^{c(\chi)}}x)
\end{equation}
for all $x\in \varpi_{\E }^{\lceil\frac{ec(\chi)-e+2}{2}\rceil}O_{\E }$
\end{lem}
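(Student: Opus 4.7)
The idea is to pull everything back to the base field via $\chi_\E = \chi\circ\Nm_{\E/\F}$ and $\psi_\E = \psi\circ\Tr_{\E/\F}$, apply Definition \ref{lemofstructureofchar} to $\chi$, and verify by a valuation count that the extra norm term is annihilated by the unramified $\psi$. Write $c := c(\chi)$ and $k := \lceil (ec - e + 2)/2\rceil$, so that by hypothesis $v_\E(x) \geq k$. Expanding,
\begin{equation*}
\chi_\E(1+x) = \chi(\Nm(1+x)) = \chi\bigl(1 + \Tr(x) + \Nm(x)\bigr),
\end{equation*}
I want to apply Definition \ref{lemofstructureofchar} to the element $y := \Tr(x) + \Nm(x) \in O_v$.

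The two inequalities I need are (a) $v(y) \geq \lceil c/2\rceil$, which is the hypothesis of Definition \ref{lemofstructureofchar}, and (b) $v(\Nm(x)) \geq c$, which lets the $\Nm(x)$ contribution to $y$ disappear under $\psi$. In the inert case ($e = 1$, $v(D) = 0$), one has $v(\Tr(x)) \geq v_\E(x) \geq k = \lceil (c+1)/2\rceil$ and $v(\Nm(x)) = 2 v_\E(x) \geq 2k \geq c + 1$, so both hold. In the ramified case ($e = 2$, $v(D) = 1$, $k = c$), I would write $x = a + b\sqrt{D}$ and use $v_\E(a) = 2v(a)$, $v_\E(b\sqrt{D}) = 2v(b) + 1$; the bound $v_\E(x) \geq c$ then forces $v(a) \geq \lceil c/2\rceil$ and $v(b) \geq \lfloor c/2\rfloor$, from which $v(\Tr(x)) = v(2a) \geq \lceil c/2\rceil$ and $v(\Nm(x)) = v(a^2 - b^2 D) \geq \min\bigl(2\lceil c/2\rceil,\, 2\lfloor c/2\rfloor + 1\bigr) = c$.

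With (a) and (b) in hand, Definition \ref{lemofstructureofchar} gives
\begin{equation*}
\chi(1 + y) = \psi\!\left(\frac{\alpha_\chi}{\varpi^c}\bigl(\Tr(x) + \Nm(x)\bigr)\right) = \psi\!\left(\frac{\alpha_\chi}{\varpi^c}\Tr(x)\right)\,\psi\!\left(\frac{\alpha_\chi}{\varpi^c}\Nm(x)\right),
\end{equation*}
and the second factor equals $1$ since (b) puts $\frac{\alpha_\chi}{\varpi^c}\Nm(x)$ in $O_v$ and $\psi$ has level $0$. The first factor equals $\psi\bigl(\Tr(\frac{\alpha_\chi}{\varpi^c}x)\bigr) = \psi_\E(\frac{\alpha_\chi}{\varpi^c}x)$ by the definition of $\psi_\E$, which is the claimed identity.

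The only delicate point is the valuation bookkeeping in the ramified case: the offset $-e+2$ inside $k = \lceil(ec-e+2)/2\rceil$ is tuned precisely so that the $b^2 D$ summand of $\Nm(x)$, which picks up an extra unit of valuation from $v(D) = 1$, still brings the total $v(\Nm(x))$ up to $c$ (not merely $c-1$) when $c$ is odd. Beyond this check, the proof is a direct unwinding of the definitions, with no real conceptual obstacle.
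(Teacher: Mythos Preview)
Your proof is correct and follows the same approach as the paper: unwind $\chi_\E = \chi\circ\Nm$ and $\psi_\E = \psi\circ\Tr$, apply the linearization of $\chi$ from Definition~\ref{lemofstructureofchar}, and discard the norm contribution. The paper's own proof is a one-line chain $\chi_\E(1+x)=\chi(1+\Tr(x))=\psi(\frac{\alpha_\chi}{\varpi^{c(\chi)}}\Tr(x))=\psi_\E(\frac{\alpha_\chi}{\varpi^{c(\chi)}}x)$ that silently absorbs the $\Nm(x)$ term; your valuation bookkeeping (showing $v(\Nm(x))\geq c$ in both the inert and ramified cases) is precisely the check the paper leaves to the reader.
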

\begin{proof}
For $x\in \varpi_{\E }^{\lceil\frac{ec(\chi)-e+2}{2}\rceil}O_{\E }$, we have
\begin{equation}
\chi_\E(1+x)=\chi(1+\Tr_{\E/\F}(x))=\psi(\frac{\alpha_\chi}{\varpi ^{c(\chi)}}\Tr_{\E /\F }(t))=\psi_\E(\frac{\alpha_\chi}{\varpi^{c(\chi)}}x).
\end{equation}
\end{proof}

\subsection{supercuspidal representations}
We will work purely locally for this section so we shall remove $v$ from sub-index without confusion.
\subsubsection{Kirillov model}
 Let $\pi$ be a supercuspidal representation over $\F$, with central character $w_\pi$.
Its Kirillov model can be realized on $S(\F^*)$ such that
\begin{equation}
\pi(\zxz{a_1}{m}{0}{a_2})\varphi(x)=w_{\pi}(a_2)\psi(ma_2^{-1}x)\varphi(a_1a_2^{-1}x),
\end{equation}

 A basis of this representation can be given by
\begin{equation}\label{formulabasissc}
\charf_{\nu,n}(x)=\begin{cases}
                        \nu(u), &\text{if\ } x=u\varpi^n\text{\  for\ } u\in O_F^*;\\
						0,&\text{otherwise}.
                       \end{cases}
\end{equation}
The action of $\omega=\zxz{0}{1}{-1}{0}$ on this basis is given by
\begin{equation}\label{eq:supercuspidalomegaaction}
\pi(\omega )\charf_{\nu,n}=C_{\nu w_0^{-1}}z_0^{-n}\charf_{\nu^{-1}w_0,-n+n_{\nu w_0^{-1}}},
\end{equation}
where  $z_0=w_{\pi}(\varpi)$ and $w_0=w_{\pi}|_{O_F^*}$. Recall that $n_\nu\leq -2$ and $c=-n_1$.

The relation $\omega^2=-\zxz{1}{0}{0}{1}$ implies that
\begin{equation}\label{doubleactionresult}
n_{\nu^{-1}}=n_{\nu w_0^{-1}},\text{\ \ } C_{\nu^{-1}} C_{\nu w_0^{-1}}=w_0(-1)z_0^{n_{\nu^{-1}}}.
\end{equation}

The newform in supercuspidal representation is simply $\charf_{1,0}$.

The numbers $C_\nu$ and $n_\nu$ can be related to epsilon factor and level of twisted representations. For simplicity we fix a uniformizer $\varpi$ and extend $\nu$ be a character of $F^*$ by requiring $\nu(\varpi)=1$.
The action above can be equivalently formulated  as
 $$\pi(\omega)\charf_{\nu,n}=\epsilon(\pi\otimes\nu^{-1},\psi,1/2)z_0^{-c(\pi\otimes\nu^{-1})-n}\charf_{w_0\nu^{-1},-c(\pi\otimes\nu^{-1})-n},$$
that is,
\begin{equation}\label{Cs&epsilon}
 C_{\nu w_0^{-1}}=\epsilon(\pi\otimes\nu^{-1},\psi,1/2)z_0^{-c(\pi\otimes\nu^{-1})},
 n_{\nu w_0^{-1}}=-c(\pi\otimes\nu^{-1}).
\end{equation}

The constant $n_\nu$ is easier to describe. In particular when $w_\pi=1$ and $\nu$ is of level $i$, $n_\nu=-\max\{c(\pi),2i\}$. (See for example \cite{YH14}.) 

Let
\begin{equation}
\Phi(g)=<\pi(g)\varphi,\varphi>
\end{equation}
be the matrix coefficient associated to the newform $\varphi$.
It is right $K_0(\varpi^{c})-$invariant. 
By Lemma \ref{Iwasawadecomp}, to understand $\Phi(g)$, it will be enough to understand $\Phi(\zxz{a}{m}{0}{1}\zxz{1}{0}{\varpi^i}{1})$ for $0\leq i\leq c$. For the following we shall denote $$\Phi^{(i)}(a,m)=\Phi(\zxz{a}{m}{0}{1}\zxz{1}{0}{\varpi^i}{1}).$$
\begin{rem}
Note that for fixed valuation for $a$ and $m$, $\Phi^{(i)}(a,m)$ only depends on $\frac{m}{a}$, as $\Phi$ is actually bi-$K_0(\varpi^c)$ invariant. So we can think of it as a one-parameter function and talk about its levels.
\end{rem}

\begin{prop}\label{propofsupportofMC}
Let $\Phi$ be the matrix coefficient associated to the newform of a minimal supercuspidal representation. 
\begin{enumerate}
\item[(i)] For $c-1\leq i\leq c$, $\Phi^{(i)}(a,m)$ is supported on $v(a)=0$ and $v(m)\geq -1$. On the support, we have
\begin{equation}
\Phi^{(i)}(a,m)=\begin{cases}
1,&\text{\ if\ }v(m)\geq 0 \text{\ and\ }i=c;\\
-\frac{1}{q-1},&\text{\ if\ }v(m)=-1 \text{\ and\ }i=c;\\
-\frac{1}{q-1},&\text{\ if\ }v(m)\geq 0 \text{\ and\ }i=c-1.\\
\end{cases}
\end{equation}
When $v(a)=0$, $v(m)=-1$ and $i=c-1>1$, 
$\Phi^{(c-1)}(a,m)$ consists of level 1 and  level 0 components.
\item[(ii)] For $0\leq i<c-1$, $\Phi^{(i)}(a,m)$ is supported on $v(a)=\min\{0,2i-c\}$, $v(m)=i-c$, consisting of level $c-i$ components.
\end{enumerate}
\end{prop}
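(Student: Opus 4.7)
The plan is a direct computation in the Kirillov model, using a Bruhat-style decomposition of $\zxz{1}{0}{\varpi^i}{1}$ through the Weyl element $\omega$, the $\omega$-action formula (\ref{eq:supercuspidalomegaaction}), and the Gauss integral Lemmas \ref{lemofGaussint} and \ref{lemoftwocharGaussint}. I would first compute $\psi^{(i)}:=\pi(\zxz{1}{0}{\varpi^i}{1})\charf_{1,0}$ explicitly, then pair with $\charf_{1,0}$ after letting the Borel act.

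For $i=c$ the calculation collapses: since $\zxz{1}{0}{\varpi^c}{1}\in K_0(\varpi^c)$ and $\charf_{1,0}$ is right $K_0(\varpi^c)$-invariant, $\Phi^{(c)}(a,m)=\langle\pi(\zxz{a}{m}{0}{1})\charf_{1,0},\charf_{1,0}\rangle$. The Kirillov formula then forces $v(a)=0$, and the inner product reduces to the Ramanujan integral $\int_{O^*}\psi(mx)\,d^*x$, which evaluates to $1$, $-1/(q-1)$, and $0$ according to $v(m)\geq 0$, $v(m)=-1$, and $v(m)\leq -2$ respectively.

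For $0\leq i<c$ I would use the decomposition
$$\zxz{1}{0}{\varpi^i}{1}=\zxz{1}{\varpi^{-i}}{0}{1}\zxz{\varpi^{-i}}{0}{0}{\varpi^i}\omega^{-1}\zxz{1}{\varpi^{-i}}{0}{1}$$
and apply the four factors in turn. The rightmost unipotent sends $\charf_{1,0}$ to $\psi(\varpi^{-i}y)\charf_{1,0}(y)$; Fourier-expanding on $O^*$ gives $\sum_\nu a_\nu \charf_{\nu^{-1},0}$, where Lemma \ref{lemofGaussint} forces $c(\nu)=i$ (plus a residual trivial-character piece when $i\leq 1$). The $\omega^{-1}$-factor acts by (\ref{eq:supercuspidalomegaaction}): minimality gives $n_{\nu^{-1}}=-\max\{c,2i\}$, so each $\charf_{\nu^{-1},0}$ becomes a multiple of $\charf_{\nu,-\max\{c,2i\}}$. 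The diagonal then shifts the index by $+2i$, placing $\psi^{(i)}$ at valuation $\min\{0,2i-c\}$, and the last unipotent multiplies by $\psi(\varpi^{-i}y)$. Pairing then forces $v(a)=\min\{0,2i-c\}$ and reduces the matrix coefficient to
$$\Phi^{(i)}(a,m)\;\propto\;\sum_\nu a_\nu C_{\nu^{-1}}\nu(\text{unit part of }a)\int_{O^*}\psi\bigl((m+\varpi^{-i}a)x\bigr)\nu(x)\,d^*x.$$

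The hard part will be the cancellation in the $\nu$-sum that produces the sharp support statements in the proposition. Term by term, Lemma \ref{lemofGaussint} permits a naively larger support in $m$ than claimed, so I would combine the epsilon-factor constants $C_{\nu^{-1}}$ from (\ref{Cs&epsilon}) with the stationary-phase identity of Lemma \ref{lemoftwocharGaussint} to show that summing over $\nu$ of level $i$ localizes the integral to $v(m)=i-c$, and identifies the $m$-dependence as a level-$(c-i)$ character of the unit part of $m$; this is (ii). The boundary case $i=c-1$ in (i) splits into a level-$(c-i)=1$ Gauss contribution at $v(m)=-1$ and a separately surviving trivial-character Fourier piece that contributes the constant $-1/(q-1)$ at $v(m)\geq 0$; these two pieces are precisely the ``level $0$ and level $1$ components'' stated at the end of (i). Once the cancellation bookkeeping is organized the remainder should be routine Gauss-sum computation.
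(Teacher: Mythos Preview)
Your overall strategy---compute $\pi(\zxz{1}{0}{\varpi^i}{1})\charf_{1,0}$ via a Bruhat factorization through $\omega$, Fourier expand, then pair---is exactly right, and your treatment of $i=c$ is correct. The paper itself does not give a proof of this proposition (it cites \cite{YH14}); however, the proof of the closely related Proposition~\ref{propofMConTorus} in the paper does carry out precisely this kind of computation, and comparing the two is instructive.

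The main difference is that the paper does \emph{not} use your single Bruhat decomposition for all $i<c$. Instead it uses two decompositions according to the size of $i$: for $i\geq c/2$ it uses
\[
\zxz{1}{0}{\varpi^i}{1}=-\,\omega\,\zxz{1}{-\varpi^i}{0}{1}\,\omega,
\]
and for $i\leq c/2$ it uses
\[
\zxz{1}{0}{\varpi^i}{1}=-\,\zxz{\varpi^{-i}}{0}{0}{1}\zxz{1}{1}{0}{1}\,\omega\,\zxz{1}{1}{0}{1}\zxz{\varpi^{i}}{0}{0}{1}.
\]
The point of the first decomposition is that after the first $\omega$ one is sitting at valuation $-c$, so Fourier expanding $\psi(-\varpi^i x)$ there produces characters of level $c-i$, not level $i$. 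A second application of $\omega$ then lands back at valuation $0$ (since $c(\chi)=c-i\leq c/2$ gives $n_{\chi^{-1}}=-c$), and the final pairing with $\charf_{1,0}$ is a single Gauss integral in $m$ against a level-$(c-i)$ character. The support $v(m)=i-c$ and the level-$(c-i)$ structure then drop out immediately from Lemma~\ref{lemofGaussint}, with \emph{no} cancellation in a character sum required. This is exactly the formula displayed in Proposition~\ref{propofMConTorus}.

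Your decomposition instead Fourier expands at level $i$. For $i>c/2$ this produces, after pairing, a sum over level-$i$ characters $\nu$ of terms each of which is individually nonvanishing on a set much larger than $\{v(m)=i-c\}$; the sharp support only emerges after summing in $\nu$. You correctly flag this as the hard step, but your proposed tool---Lemma~\ref{lemoftwocharGaussint}---is not obviously applicable: that lemma separates a product $\chi\nu$ inside a single Gauss integral when $c(\chi)\geq 2c(\nu)$, whereas what you need is an identity collapsing a sum over $\nu$ of products $a_\nu\, C_{\nu^{-1}}\,\nu(u)\,(\text{Gauss sum in }m)$. That identity is true (it is equivalent to the equality of the two computations of $\psi^{(i)}$), but proving it directly amounts to redoing the paper's computation by another route. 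So your approach is correct in principle but creates extra work; switching to the $-\omega\zxz{1}{-\varpi^i}{0}{1}\omega$ decomposition for $i\geq c/2$ makes the ``cancellation'' disappear.
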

\begin{rem}\label{rem:supportofMC}
See \cite{YH14} for more general setting. When $\pi$ is an induced representation from two ramified characters of same level or non-minimal supercuspidal representation, we have essentially same conclusion, except that $\Phi^{(i)}(a,m)$ could be supported on $v(a)\geq 0$ when $i=c(\pi)/2$.

\end{rem}
This information is however not enough for the purpose of this paper, and we need input from compact induction theory to tell us about $C_\nu$.

\subsubsection{Compact induction theory and epsilon factors}

It was shown in \cite{BH14} that all supercuspidal representations can be constructed as an induced representation from a representation of a compact subgroup of $\GL_2$. The results here are directly taken from \cite{BH14}, though we use a different convention for the levels from that of \cite{BH14} and readers should be aware of this.

In short, when $p\neq 2$, supercuspidal representations are related to character $\theta$ defined over a quadratic field extension. When $\pi=\pi_\theta$ is of level $\varpi^{2k}$, $\theta$ is level $\varpi_\E^k$ over an inert quadratic extension $\E$; When $\pi_\theta$ is of level $\varpi^{2k+1}$, $\theta$ is of level $\varpi_E^{2k}$ over a ramified quadratic extension $\E$.

Further this association satisfies:
\begin{enumerate}
\item $w_{\pi_\theta}=\theta|_{F^*}$.
\item $\pi_{\hat{\theta}}=\hat{\pi}_\theta$.
\item if $\eta $ is a character of $F^*$, then $\pi_{\chi\eta_E}=\eta\pi_\chi$, where $\eta_E=\eta\circ N_{E/F}$.
\end{enumerate}


The supercuspidal representation $\pi$ is called minimal if its level is minimal among twists. In particular the supercuspidal representations with trivial central characters are minimal.
When $\pi_\chi$ is minimal, there is a simple relation between  the datum. If $c(\pi)$ is odd, then $E/F$ is ramified and $\chi$ is of level $c(\pi)-1$; If $c(\pi)$ is even, then $E/F$ is unramified and $\chi$ is of level $c(\pi)/2$.




Now we introduce the basic lemma on the epsilon factors for supercuspidal representations when $p\neq 3$.

\begin{lem}\label{thmoflocalepsilon}
Let  $\theta$ be of level $n$ over a quadratic local field extension $\E/\F$ with ramification index $e$. Let $\pi=\pi_\theta$ be the associated supercuspidal representation. Suppose that $\pi$ is minimal. 
Denote $q_\E$ to be the order of residue field for $\E$ so that $q_\E=q^{2/e}$. 
Let $\eta$ be a character of $F^*$ with level $m$ such that 
the level of $\eta_E$ satisfies $em-e+1\leq c(\theta)$, then 
\begin{equation}
\epsilon(\pi\otimes\eta,1/2,\psi)=(-1)^{e(E/F)n}\frac{q_\E-1}{q_\E}\sqrt{q_\E^n}\int\limits_{E^*}(\theta\eta_E)^{-1}(x)\psi_E(x) d^*x.
\end{equation}

\end{lem}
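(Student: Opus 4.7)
The plan is to combine the twisting property of the compact induction correspondence $\theta \leftrightarrow \pi_\theta$ with the standard Gauss-sum expression for the epsilon factor of a compactly induced supercuspidal representation.

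First I would reduce the twisted epsilon factor to an untwisted one. By property (3) of the correspondence recalled just before the statement, $\pi_\theta \otimes \eta = \eta \cdot \pi_\theta = \pi_{\theta \eta_\E}$, so that
$$\epsilon(\pi \otimes \eta, 1/2, \psi) = \epsilon(\pi_{\theta \eta_\E}, 1/2, \psi).$$
The hypothesis $em - e + 1 \leq c(\theta)$ says exactly that $c(\eta_\E) \leq n$. Setting $\chi := \theta \eta_\E$, we obtain a character of $\E^*$ whose level is at most $n$, and the compact induction datum for $\pi_\chi$ sits inside the same compact-mod-center subgroup used to build $\pi_\theta$. In particular, the conductor bookkeeping of $\pi_\chi$ is governed by the exponent $n$ attached to $\theta$.

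Next I would invoke the Bushnell--Henniart/Kutzko formula (\cite{BH14}) expressing the epsilon factor of such a compactly induced supercuspidal representation as a normalized Gauss integral. In both the inert case $e = 1$ and the ramified case $e = 2$, the formula reads
$$\epsilon(\pi_\chi, 1/2, \psi) = c_{e,n} \int_{\E^*} \chi^{-1}(x) \psi_\E(x) \, d^*x,$$
for an explicit normalization constant $c_{e,n}$ depending only on $e$, $n$ and $q_\E$. Substituting $\chi = \theta \eta_\E$ and matching $c_{e,n}$ against the target $(-1)^{en}(q_\E - 1) q_\E^{n/2 - 1}$ yields the claimed identity.

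The main obstacle is the careful bookkeeping of the normalization constants. One has to track (i) the level $1-e$ of $\psi_\E$ induced by the unramified $\psi$, (ii) the Haar measure convention $\Vol(\F^* \backslash \E^*) = 1$, and (iii) the shift between the level convention of this paper and that of \cite{BH14} (flagged in the paragraph preceding the lemma). Each contributes a piece of the sign $(-1)^{en}$, the volume factor $(q_\E - 1)/q_\E$ coming from $\Vol(O_\E^*)/\Vol(1 + \varpi_\E^n O_\E)$ type comparisons, and the metric factor $\sqrt{q_\E^n}$ arising from the square-root of the conductor in the local functional equation. Once these constants are reconciled, the proof is essentially a one-line substitution.
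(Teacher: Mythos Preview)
The paper does not actually prove this lemma: it is stated as a ``basic lemma'' quoted from the compact induction theory in \cite{BH14}, with the caveat that the level conventions differ. So there is no paper-proof to compare against; your proposal is a reconstruction of the derivation.

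Your sketch is essentially the right one. A couple of points worth sharpening. First, after writing $\chi=\theta\eta_\E$, you say $c(\chi)\leq n$; in fact $c(\chi)=n$ exactly, because $\pi$ is assumed minimal, so $c(\pi_{\theta\eta_\E})=c(\pi\otimes\eta)\geq c(\pi_\theta)$, which forces $c(\theta\eta_\E)\geq c(\theta)$, while the other inequality is the hypothesis. This is what guarantees that the Gauss-integral formula applies with the \emph{same} exponent $n$ and the same normalizing constant $c_{e,n}$ on both sides; without it the constant would shift. Second, your remark that ``the compact induction datum for $\pi_\chi$ sits inside the same compact-mod-center subgroup'' is the substantive point: the Bushnell--Henniart formula is really a statement about the inducing datum, and one needs that twisting by $\eta$ does not move you to a different stratum. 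That is precisely what minimality of $\pi$ together with the level bound on $\eta_\E$ buys. With those two clarifications, the argument is complete modulo the bookkeeping you already flagged.
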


\begin{cor}\label{corofquotientofC}
Suppose that $\pi=\pi_\theta$ is a minimal supercuspidal representation associated to a character $\theta$ defined over a quadratic field extension $\E$ with ramification index $e$. Let $\alpha_\theta$ be the constant associated to $\theta$ as in Definition \ref{lemofstructureofchar}. Let $\eta $ and $\nu$ be multiplicative characters over $\F$. Assume that $ec(\nu)-e+1\leq c(\theta)/2$, $ec(\eta)-e+1\leq c(\theta)$, and let $\alpha_\eta$  be the constant associated to $\eta$. Then
\begin{equation}
\frac{C_{\nu\eta^{-1}}}{C_{\eta^{-1}}}=\nu_\E(\frac{\alpha_\theta+\alpha_\eta \varpi_\E^{c(\theta)+e-1}\varpi^{-c(\eta)}}{\varpi_\E^{c(\theta)+e-1}})
\end{equation}
\end{cor}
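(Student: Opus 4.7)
The approach is to convert the ratio $C_{\nu\eta^{-1}}/C_{\eta^{-1}}$ into a ratio of local Gauss integrals over $\E^*$ via the epsilon-factor dictionary (\ref{Cs&epsilon}) combined with Lemma \ref{thmoflocalepsilon}, and then to evaluate that ratio by a stationary phase analysis on $\E^*$.

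First I would use (\ref{Cs&epsilon}) to write
\begin{equation*}
\frac{C_{\nu\eta^{-1}}}{C_{\eta^{-1}}} = \frac{\epsilon(\pi\otimes\nu^{-1}\eta w_0^{-1},\psi,1/2)}{\epsilon(\pi\otimes\eta w_0^{-1},\psi,1/2)} \cdot z_0^{c(\pi\otimes\eta w_0^{-1}) - c(\pi\otimes\nu^{-1}\eta w_0^{-1})}.
\end{equation*}
The given level bounds force both twists to have conductor exactly $c(\pi)$, so the $z_0$-power drops out. Applying Lemma \ref{thmoflocalepsilon} with $n = c(\theta)$ to each epsilon factor and cancelling the matching prefactors $(-1)^{en}(q_\E-1)q_\E^{-1}\sqrt{q_\E^n}$, one is left with
\begin{equation*}
\frac{C_{\nu\eta^{-1}}}{C_{\eta^{-1}}} = \frac{\int_{\E^*}\mu(x)\nu_\E(x)\psi_\E(x)\,d^*x}{\int_{\E^*}\mu(x)\psi_\E(x)\,d^*x}, \qquad \mu := \theta^{-1}\eta_\E^{-1}w_{\pi,\E}.
\end{equation*}

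The character $\mu$ has level $c(\theta)$, and the hypothesis $ec(\nu) - e + 1 \leq c(\theta)/2$ gives $c_\E(\nu_\E) \leq c_\E(\mu)/2$, which is precisely the input for the $\E^*$-analog of Lemma \ref{lemoftwocharGaussint} (with $\psi_\E$ of level $1-e$ playing the role of the unramified additive character). This stationary phase evaluates the ratio as $\nu_\E(x_0)$, where $x_0$ is the stationary point characterized by $\mu(1+y)\psi_\E(x_0 y) \equiv 1$ on the relevant half-range in $y$.

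It then remains to identify $x_0$. Setting $\gamma = \alpha_\theta/\varpi_\E^{c(\theta)+e-1}$ and $\delta = \alpha_\eta/\varpi^{c(\eta)}$, one has $\theta^{-1}(1+y) = \psi_\E(-\gamma y)$ by the convention defining $\alpha_\theta$, $\eta_\E^{-1}(1+y) = \psi_\E(-\delta y)$ by Lemma \ref{lemofchiE}, and $w_{\pi,\E}(1+y) = \theta((1+y)(1+\bar y)) = \psi_\E((\gamma + \bar\gamma)y)$, using the identity $\psi_\E(c\bar y) = \psi_\E(\bar c y)$ coming from Galois-invariance of $\Tr_{\E/\F}$. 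The trace-zero normalization $\gamma + \bar\gamma = 0$ is forced by the minimal supercuspidal datum, since $w_\pi = \theta|_{\F^*}$ must have level well below $c(\theta)$ and Lemma \ref{lem:surjective} allows a trace-zero representative of $\alpha_\theta$. Multiplying out yields $\mu(1+y) = \psi_\E(-(\gamma+\delta)y)$, hence $x_0 = \gamma + \delta$, matching the claimed expression. The main obstacle is this last step: the Galois bookkeeping for $\alpha_\theta$ must be carried out uniformly across the inert and ramified cases, and one must verify that the cross term $y\bar y$ in $N_{\E/\F}(1+y)$ and the higher-order terms of the logarithmic expansion of $\theta$ both lie outside the effective range of $\psi_\E$ on the stationary-phase domain.
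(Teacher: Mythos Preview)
Your approach is the same as the paper's: rewrite $C_{\nu\eta^{-1}}/C_{\eta^{-1}}$ as a ratio of epsilon factors, invoke Lemma~\ref{thmoflocalepsilon} to get a ratio of Gauss integrals over $\E^*$, linearize the character $\theta^{-1}\eta_\E^{-1}$ via Definition~\ref{lemofstructureofchar} and Lemma~\ref{lemofchiE}, and apply Lemma~\ref{lemoftwocharGaussint}. The paper does exactly this, in three lines.

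The one substantive difference is your handling of $w_0$. The paper's first displayed equation simply reads
\[
\frac{C_{\nu\eta^{-1}}}{C_{\eta^{-1}}}=\frac{\epsilon(\pi_\theta\otimes\nu^{-1}\eta,1/2,\psi)}{\epsilon(\pi_\theta\otimes\eta,1/2,\psi)},
\]
with no $w_0^{-1}$ in the twist; this is because the paper is working under the standing reduction $c(w_\pi)\le 1$ of Remark~\ref{rem:twistforlocalint} (and in the applications in Section~4 one has $w_\pi=1$ outright), so $w_0$ is trivial and drops out of \eqref{Cs&epsilon}. You instead keep $w_{\pi,\E}$ in $\mu$ and then try to remove it via the claim that minimality forces a trace-zero representative for $\alpha_\theta$. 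That claim is not correct: writing $\alpha_\theta=a+b\sqrt{D}$ in the inert case, minimality of $\pi_\theta$ is equivalent to $b\in O_v^*$ (so that no twist by $\chi_\E$ with $\alpha_\chi\in\F$ can kill the leading term), and says nothing about $a$. In particular one can have $a\in O_v^*$, hence $c(w_\pi)=c(\theta)$, and no trace-zero representative exists. With $w_{\pi,\E}$ retained your stationary point becomes $\delta-\bar\gamma$ rather than $\gamma+\delta$, and $\nu_\E$ does not identify these in general.

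The fix is simply to adopt the paper's reduction and take $w_0=1$ from the outset; then your $\mu$ is exactly $\theta^{-1}\eta_\E^{-1}$, your linearization coincides with the paper's displayed formula for $\theta^{-1}\eta_\E^{-1}(1+x)$, and the stationary-phase step is the direct $\E$-analogue of Lemma~\ref{lemoftwocharGaussint}. Everything else in your outline is correct and matches the paper.
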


\begin{proof}
By previous lemma,
\begin{align}
\frac{C_{\nu\eta^{-1}}}{C_{\eta^{-1}}}=\frac{\epsilon(\pi_\theta\otimes\nu^{-1}\eta,1/2,\psi)}{\epsilon(\pi_\theta\otimes\eta,1/2,\psi)}=\frac{\int\limits_{\E^*} \theta^{-1}(\nu\eta^{-1})_{\E}\psi_E}{\int\limits_{\E^*} \theta^{-1}\eta^{-1}_{\E}\psi_E}
\end{align}

Note that by definition and  Lemma \ref{lemofchiE}
\begin{equation}
\theta^{-1}\eta^{-1}_\E(1+x)=\psi_\E(-\frac{\alpha_\theta}{\varpi_\E^{c(\theta)+e-1}}x)\psi_\E(-\frac{\alpha_\eta}{\varpi^{c(\eta)}}x)=\psi_\E(\frac{-\alpha_\theta-\alpha_\eta \varpi_\E^{c(\theta)+e-1}\varpi^{-c(\eta)}}{\varpi_\E^{c(\theta)+e-1}}x)
\end{equation}
for $x\in \varpi_\E^{\lceil \frac{c(\theta)}{2}\rceil}O_\E$.  Note that since $ec(\eta)-e+1\leq c(\theta)$ and $\pi_\theta$ is minimal
$$-\alpha_\theta-\alpha_\eta \varpi_\E^{c(\theta)+e-1}\varpi^{-c(\eta)}\in O_\E^*.$$

Then we apply Lemma \ref{lemoftwocharGaussint} to get
\begin{align}
\frac{\int\limits_{\E^*} \theta^{-1}(\nu\eta^{-1})_{\E}\psi_E}{\int\limits_{\E^*} \theta^{-1}\eta^{-1}_{\E}\psi_E}=\frac{\int\limits_{-(c(\theta)+e-1)} \theta^{-1}(\nu\eta^{-1})_{\E}\psi_E}{\int\limits_{-(c(\theta)+e-1)} \theta^{-1}\eta^{-1}_{\E}\psi_E}=\nu_\E(\frac{\alpha_\theta+\alpha_\eta \varpi_\E^{c(\theta)+e-1}\varpi^{-c(\eta)}}{\varpi_\E^{c(\theta)+e-1}}).
\end{align}
\end{proof}

\subsection{Twisted representation, local integral and matrix coefficient for twisted elements}\label{secoftwist}
Let $(\pi,V)$ be a local representation of $\GL_2$ realized in the linear space $V$. For a multiplicative character $\chi$, suppose that $\chi(\varpi)=1$. We shall write $\chi(g)$ to mean $\chi(\det g)$.
The representation $\pi'=\pi\otimes\chi$ can be realized in the same space, with the action
\begin{equation}\label{twistedaction}
\pi'(g)v=\chi(g)\pi(g)v.
\end{equation}

However we need to be more careful if we want to keep working with, for example, induced model or Whittaker model. Let $\iota_\chi$ denote the following map
\begin{align}
\Ind(\chi_1,\chi_2)&\rightarrow \pi'=\Ind(\chi_1\chi,\chi_2\chi)\\
f&\mapsto f(g)\chi(g).\notag
\end{align}
Note that this is not with respect to group actions. We endow the image of $\iota_\chi$ with the action of $\pi$ by forcefully require $\iota_\chi$ to be group homomorphism. Then one can easily see that
\begin{equation}
\pi'(g_0)\iota_\chi(f)=f(gg_0)\chi(gg_0)=\pi(g_0)\iota_\chi(f)\chi(g_0).
\end{equation}
So the relation between $\pi'$ and $\iota_\chi(\pi)$ agrees with the identification in \ref{twistedaction}. 

Similarly for elements in Whittaker models or Kirillov models, we define the map $\iota_\chi$ as
\begin{align}
W&\mapsto W(g)\chi(g)\\
\varphi&\mapsto \varphi(x)\chi(x).\notag
\end{align}
\begin{defn}\label{def:twistednewform}
In this paper we will care about elements in $\pi$ which are images under $\iota_\chi$ of newform in $\pi\otimes\chi^{-1}$. We call such elements twisted newforms.
\end{defn}
Now we show how this twisting can be used for computing matrix coefficients and p-adic integrals.

Suppose that $\pi$ is unitary, and $(\cdot,\cdot)_\pi$ is an invariant unitary pairing for $\pi$. For elements from Whittaker model, the pairing is 
\begin{equation}
(W_1,W_2)_\pi=\int\limits_{\F^*}W_1(\zxz{x}{0}{0}{1})\overline{W_2}(\zxz{x}{0}{0}{1})d^*x.
\end{equation}
Then one can easily check that
\begin{equation}
(\iota_\chi (W_1),\iota_\chi (W_2))_{\pi'}=(W_1,W_2)_\pi.
\end{equation}
As a consequence, if we let $\Phi(g)$ be the matrix coefficient associated to $W_1,W_2$, and let $\Phi'$ be the matrix coefficient associated to $\iota_\chi(W_i)$, then
\begin{equation}\label{formulatwistMC}
\Phi'(g)=\Phi(g)\chi(g).
\end{equation}

Now with the same notations,
\begin{equation}\label{eq:twistedlocalint}
I(\Phi,\Omega)=\int\limits_{\F^*\backslash\E^*}\Phi(e)\Omega(e)de=\int\limits_{\F^*\backslash\E^*}\Phi(e)\chi(\det(e))\Omega\chi_\E^{-1}(e)de=I(\Phi',\Omega\chi_\E^{-1}).
\end{equation}
This means that instead of looking for test vectors for the pair $(\pi,\Omega)$, we can solve the same problem for the pair $(\pi\otimes \chi, \Omega\chi_\E^{-1})$. In particular we can assume that $\pi$ is a minimal representation. 
\begin{rem}\label{rem:twistforlocalint}
\added{Further for $\pi$ being a supercuspidal representation and $p\neq 2$, we can assume that its central character is either unramified or level $1$.  Let $\alpha$ be the constant associated to the central character of $\pi$. Then by Lemma \ref{lem:surjective}, there exists a character $\chi$ whose associated constant is $-\frac{\alpha}{2}$. As a result, the central character for $\pi\otimes\chi$ will have smaller level. Repeat this process for finite steps and one will get a central character of level $\leq 1$.}
\end{rem}

\subsection{Iwasawa decomposition for conjugated torus}
Note that $\charf_{\eta,-d}=\pi(\zxz{\varpi^{d}}{0}{0}{1})\charf_{\eta,0}$. So if we know the matrix coefficient for $\charf_{\eta,0}$, we can just do a conjugation to get the matrix coefficient for $\charf_{\eta,-d}$. For the local integral $I(\Phi,\Omega)$, this conjugation of matrix coefficient is equivalent to a conjugated embedding of $\E^*$ into $\GL_2$ via a change of variable. This motivates the following consideration.

Suppose that $\E=\F(\sqrt{D})$ is embedded into the matrix algebra via
$$ a+b\sqrt{D}\mapsto \zxz{a}{b}{bD}{a}.$$
For $g=\zxz{\varpi^d}{0}{0}{1}$,
$$g^{-1}eg=\zxz{a}{b\varpi^{-d}}{bD\varpi^{d}}{a}.$$
For later use we study here the Iwasawa decomposition for this matrix.
\begin{lem}\label{lemofcosetdecomp}
If  $v(\frac{bD\varpi^d}{a})\geq 0$, then let $i= v(\frac{bD\varpi^d}{a})$ and
\begin{equation}\label{eq:cosetdecomp}
\zxz{a}{b\varpi^{-d}}{bD\varpi^{d}}{a}=a\zxz{\frac{a^2-b^2D}{abD}\varpi^{i-d}}{\frac{b}{a\varpi^d}}{0}{1}\zxz{1}{0}{\varpi^i}{1}\zxz{\frac{bD}{a\varpi^{i-d}}}{0}{0}{1}.
\end{equation}
If $v(\frac{bD\varpi^d}{a})< 0$, then $i=0$ and
\begin{equation}
\zxz{a}{b\varpi^{-d}}{bD\varpi^{d}}{a}=bD\varpi^d\zxz{\frac{a^2-b^2D}{b^2D^2\varpi^{2d}}}{\frac{a}{bD\varpi^d}-\frac{a^2-b^2D}{b^2D^2\varpi^{2d}}}{0}{1}
\zxz{1}{0}{1}{1}\zxz{1}{-1+\frac{a}{bD\varpi^d}}{0}{1}.
\end{equation}

\end{lem}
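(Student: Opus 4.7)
The plan is to verify each of the two displayed identities by direct matrix multiplication, and then to confirm that each factor lies in the correct piece of the Iwasawa decomposition of Lemma \ref{Iwasawadecomp}. First I would observe that the valuation $v(bD\varpi^d/a)$ dichotomizes the argument: when this valuation is nonnegative we can set $i=v(bD\varpi^d/a)$ and use the coset representative $\zxz{1}{0}{\varpi^i}{1}$, and when it is negative we are forced into $i=0$ with the Bruhat-style coset $\zxz{1}{0}{1}{1}$, after an appropriate row swap that is absorbed by the unipotent factor on the right.

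For the first case, I would compute the right-hand side by first multiplying the middle and right factors, obtaining $\zxz{bD\varpi^d/(a\varpi^i)}{0}{bD\varpi^d/a}{1}$, and then multiplying on the left by the Borel factor. The $(2,1)$, $(2,2)$ and $(1,2)$ entries of the product match $bD\varpi^d$, $a$ and $b\varpi^{-d}$ respectively after scaling by $a$, by inspection. The $(1,1)$ entry reduces to the scalar $1$ via the one-line algebraic identity $\tfrac{a^{2}-b^{2}D}{a^{2}}+\tfrac{b^{2}D}{a^{2}}=1$, which is where the determinant $\det(a+b\sqrt{D})=a^{2}-b^{2}D$ of the embedded torus element enters. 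Then I would check that the first factor is upper-triangular (hence in $B$) and that the third factor is a diagonal matrix with entries $\tfrac{bD}{a\varpi^{i-d}}\in O^{*}$ and $1$, so in particular in $K_{0}(\varpi^{c})$, where the unit property of $\tfrac{bD}{a\varpi^{i-d}}$ follows precisely from the choice $i=v(bD\varpi^{d}/a)$.

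For the second case, the verification is analogous: I multiply the three factors on the right and check directly that the product equals $\zxz{a}{b\varpi^{-d}}{bD\varpi^{d}}{a}$. The $(2,1)$ and $(2,2)$ entries match by an immediate computation; the $(1,1)$ entry reduces to $\tfrac{a}{bD\varpi^{d}}$ before the global scaling by $bD\varpi^{d}$; and the $(1,2)$ entry reduces via a cancellation of the two terms involving $\tfrac{(a^{2}-b^{2}D)a}{b^{3}D^{3}\varpi^{3d}}$, leaving $\tfrac{b^{2}D}{b^{2}D^{2}\varpi^{2d}}$, which after scaling gives exactly $b\varpi^{-d}$. Upper triangularity of the first factor and the fact that $\zxz{1}{-1+a/(bD\varpi^{d})}{0}{1}\in K_{0}(\varpi^{c})$ (its entries lying in $O$, using $v(a/(bD\varpi^{d}))>0$) are then clear.

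No serious obstacle is expected; the content is a direct $2\times 2$ computation. The only care needed is bookkeeping: tracking which quantities are units under the hypothesis on $v(bD\varpi^{d}/a)$, and ensuring that the scalar pulled out in front absorbs the determinant correctly so that the right-hand product genuinely equals the left-hand matrix rather than a scalar multiple of it.
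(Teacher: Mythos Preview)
Your approach is correct and complete: the lemma is a pair of $2\times 2$ matrix identities, and direct multiplication together with the bookkeeping on valuations (to confirm the rightmost factor lands in $K_0(\varpi^c)$) is exactly what is required. The paper itself offers no proof of this lemma, treating it as a routine computation, so your verification is the natural and essentially unique route.
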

\begin{rem}\label{rem:vD}
We assume from now on that $v(D)=0 $ if $\E$ is an inert extension and $v(D)=1$ if $\E$ is a ramified extension. Otherwise one can do a conjugation to change the embedding. So if one change test vectors correspondingly, one will get exactly the same calculation for local integral.
\end{rem}

\section{Vanishing and decaying results for test vectors}
In this section we will show that the local integral of Waldspurger's period integral will either be vanishing  or quickly decay in size if we don't pick proper test vectors.

We first prove a vanishing result for most of test vectors when $\pi$ is of large level.
\begin{prop}\label{prop:vanishinglocalint}
Let $\pi$ be a supercuspidal representation or induced representation defined by two ramified characters of same level, and $c(\pi)=2k$ or $2k+1$. Let $\Omega $ be a character of $\E^*$ such that $\frac{2}{e}c(\Omega)< c(\pi)$. Let $\varphi^0$ be the newform for $\pi$ and $\Phi$ be the matrix coefficient associated to $\pi(\zxz{\varpi^{d}}{0}{0}{1})\varphi^0 $. Suppose that $v(D)=0$ if $\E$ is an unramified extension and $v(D)=1$ if it's ramified.
Then if $d\neq k$,
\begin{equation}
I(\Phi,\Omega)=\int\limits_{\F^*\backslash\E^*}\Phi(e)\Omega(e)de=0.
\end{equation}

\end{prop}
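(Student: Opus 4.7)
The plan is to expand $I(\Phi,\Omega)$ as a finite sum of $p$-adic integrals indexed by the Iwasawa position of Lemma \ref{lemofcosetdecomp}, and then dispatch each piece by support considerations followed by Gauss-integral vanishing (Lemma \ref{lemofGaussint}) under the hypothesis $(2/e)c(\Omega)<c(\pi)$.

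First I reduce to the newform's matrix coefficient: with $g_d=\zxz{\varpi^d}{0}{0}{1}$ and $\Phi^0$ the matrix coefficient of $\varphi^0$, invariance of the pairing gives $\Phi(g)=\Phi^0(g_d^{-1}g\,g_d)$. For $e=a+b\sqrt D\in\E^*$ embedded via \eqref{quadraticembedding}, the conjugate $g_d^{-1}eg_d=\zxz{a}{b\varpi^{-d}}{bD\varpi^d}{a}$ is exactly the matrix treated in Lemma \ref{lemofcosetdecomp}. Using bi-$K_0(\varpi^c)$-invariance of $\Phi^0$ together with the central character, one reads off
\[
\Phi(e)=w_\pi(a)\,\Phi^{0,(i)}\!\bigl(\tfrac{(a^2-b^2D)\varpi^{i-d}}{abD},\,\tfrac{b}{a\varpi^d}\bigr)
\]
when $i=v(bD\varpi^d/a)\ge 0$, with the parallel formula of Lemma \ref{lemofcosetdecomp} in the other range. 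For non-minimal $\pi$ or for induced representations of the type in the statement, Remark \ref{rem:supportofMC} (after, if necessary, a reduction to the minimal case via the twist of Section \ref{secoftwist}) puts us in the same picture.

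Next I parametrize $\F^*\backslash\E^*$ by $t=b/a$ and split $I(\Phi,\Omega)$ according to $i$. For the generic range $0\le i<c-1$, the two support conditions of Proposition \ref{propofsupportofMC}, namely $v(t\varpi^{-d})=i-c$ and the defining identity $i=v(t)+d+v(D)$, together collapse to $2d=c-v(D)$, i.e.\ $d=k$. The analogous support computation in the $v(bD\varpi^d/a)<0$ regime (where $i=0$) forces $v(a')=-v(D)-2d=-c$, again $d=k$. Hence for every $d\ne k$ only the boundary indices $i\in\{c-1,c\}$ can possibly contribute, and on those slices the constraint $v(m')\ge -1$, with $v(m')=c-2d-v(D)$ or $c-1-2d-v(D)$, rules out all $d>k$.

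The remaining case $d<k$ is where Lemma \ref{lemofGaussint} enters. On the two surviving boundary shells $v(t)=c-d-v(D)$ and $v(t)=c-1-d-v(D)$, the matrix coefficient is locally constant, while by Lemma \ref{lemofchiE} and the identity $c_{\E}(\Omega)=ec(\Omega)-e+1$ the restriction $\Omega(1+t\sqrt D)$ is an additive character in $t$ of level at most $(2/e)c(\Omega)$. Lemma \ref{lemofGaussint} forces a precise match between this character level and $v(t)$ for the shell integral to be nonzero, and the hypothesis $(2/e)c(\Omega)<c(\pi)$ rules out such a match on either shell for any $d<k$; hence every remaining Gauss integral vanishes. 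The main obstacle is the careful bookkeeping of the shift $v(D)=e-1$ through the support and level conditions in the ramified case, and checking that the two $F^*$-cosets of $\varpi_E$ appearing there both yield the same chain of vanishings; but the symmetric role of $e$ in the hypothesis $(2/e)c(\Omega)<c(\pi)$ makes the count work uniformly in the inert and ramified settings.
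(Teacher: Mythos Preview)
Your treatment of the case $d>k$ and of the generic range $0\le i<c-1$ is essentially the paper's argument: the support constraints of Proposition~\ref{propofsupportofMC} on $v(m)$ (and, for the $v(bD\varpi^d/a)<0$ regime, on $v(\alpha)$) are incompatible with the Iwasawa data from Lemma~\ref{lemofcosetdecomp} unless $2d=c-v(D)$, so for $d\ne k$ only the boundary shells $i=c-1,c$ can survive, and for $d>k$ the condition $v(m)\ge -1$ kills those too.

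The gap is in the case $d<k$. Your claim that each surviving boundary integral vanishes by a Gauss-sum argument is incorrect. On the shells $i=c$ and $i=c-1$ (with $d<k$) one has $v(m)=i-v(D)-2d\ge 1$, so by Proposition~\ref{propofsupportofMC}(i) the matrix coefficient takes the \emph{constant} values $1$ and $-\tfrac{1}{q-1}$ respectively. Moreover, on these shells $v_{\E}(b\sqrt{D}/a)\ge e(c-1-d-v(D))+v_{\E}(\sqrt D)$ exceeds $c(\Omega)$ by the hypothesis $\tfrac{2}{e}c(\Omega)<c(\pi)$, so $\Omega(a+b\sqrt D)=\Omega(a)$ is constant there as well (and combines with $w_\pi(a)$ to give $1$). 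Thus each shell contributes a \emph{nonzero} volume term; Lemma~\ref{lemofGaussint} does not apply, since there is no nontrivial character left in the integrand. (Incidentally, the identity $c_\E(\Omega)=ec(\Omega)-e+1$ you invoke is the base-change formula for a character pulled back from $\F^*$; it is not relevant to $\Omega$, which lives natively on $\E^*$.)

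What actually makes $I(\Phi,\Omega)$ vanish for $d<k$ is a cancellation between the two shells: the contribution from $i=c$ is $\Vol\bigl(\F^*\backslash\{v(bD\varpi^d/a)\ge c\}\bigr)$, while the contribution from $i=c-1$ is $-\tfrac{1}{q-1}\cdot\Vol\bigl(\F^*\backslash\{v(bD\varpi^d/a)=c-1\}\bigr)$, and one checks directly that the second volume equals $(q-1)$ times the first. This volume identity, together with the explicit constants $1$ and $-\tfrac{1}{q-1}$ from Proposition~\ref{propofsupportofMC}(i), is the missing ingredient in your argument.
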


\begin{proof}
Let $\Phi^0$ be the matrix coefficient associated to the newform of $\pi$, which is described in Proposition \ref{propofsupportofMC} and Remark \ref{rem:supportofMC}. Then
\begin{equation}
I(\Phi,\Omega)=\int\limits_{\F^*\backslash\E^*}\Phi^0(g^{-1}eg)\Omega(e)de=0
\end{equation}
for $g=\zxz{\varpi^{d}}{0}{0}{1}$. We shall compare the support of $\Phi^{0,(i)}(a,m)$ as in Proposition \ref{propofsupportofMC} with the Iwasawa decomposition of conjugated torus given in Lemma \ref{lemofcosetdecomp}. 

The first case is when $d>k$. If $v(\frac{bD\varpi^d}{a})\geq 0$, $$v(m)=v(\frac{b}{a\varpi^d})=i-v(D)-2d<i-c(\pi).$$
If $v(\frac{bD\varpi^d}{a})<0$,
$$v(m)=v(\frac{a}{bD\varpi^d}-\frac{a^2-b^2D}{b^2D^2\varpi^{2d}}   )=-v(D)-2d<-c(\pi).$$
So the conjugated torus completely misses the support of matrix coefficient.

Consider the case $d<k$ now. Then we will get reversed inequalities like above. So the conjugated torus will miss the support except when $i=c(\pi)$ and $c(\pi)-1$, where we know the value of $\Phi^0$ very explicitly according to Proposition \ref{propofsupportofMC}. Thus
\begin{equation}
I(\Phi,\Omega)=\int\limits_{\F^*\backslash\{v(\frac{bD\varpi^d}{a})\geq c(\pi)\}}w_\pi(a)\Omega(a+b\sqrt{D})de+\int\limits_{\F^*\backslash\{v(\frac{bD\varpi^d}{a})=c(\pi)-1\}}(-\frac{1}{q-1})w_\pi(a)\Omega(a+b\sqrt{D})de.
\end{equation}
By the assumption on $c(\Omega)$, $w_\pi(a)\Omega(a+b\sqrt{D})=1$ on above domains.
One can also check (including split extension) that 
\begin{equation}
\Vol(\F^*\backslash\{v(\frac{bD\varpi^d}{a})=c(\pi)-1\}  )=(q-1)\Vol(\F^*\backslash\{v(\frac{bD\varpi^d}{a})\geq c(\pi)\} ).
\end{equation} 
So $I(\Phi,\Omega)=0$ as long as $d\neq k$.
\end{proof}
\begin{rem}\label{rem:vanishinglocalint}
The argument for the case $d<k$ also works for the case $\frac{2}{e}c(\Omega)\geq c(\pi)$ if $d=-n$ for $n\geq c(\Omega)$. If one choose $d$ properly such that $\Omega$ is trivial on $\{v(\frac{bD\varpi^d}{a})\geq c(\pi)\}$ but not trivial on $\{v(\frac{bD\varpi^d}{a})=c(\pi)-1\} $, one will get test vector for $\Hom_{\E^*}(\pi\otimes\Omega,\C)$ when $c(\Omega)$ is sufficiently large.

Using (\ref{eq:twistedlocalint}), one can extend the vanishing results to more general selections of test vectors.
\end{rem}
\begin{rem}
This result also gives evidence why we will not find test vector if, for example, $\E$ is inert extension and $c(\pi)=2k+1>2c(\Omega)$. This is because we will never be able to take $d$ such that
\begin{equation}
v(m)=i-v(D)-2d=i-c(\pi)
\end{equation}
\end{rem}

Next we prove a power saving result as we vary test vectors when $\pi$ is of smaller level.

\begin{prop}\label{prop:decaylocalint}
Let $\pi$ be an unramified or special unramified representation. Let $\Omega $ be a fixed character of $\E^*$. Let $\varphi^0$ be the newform for $\pi$ and $\Phi$ be the matrix coefficient associated to $\pi(\zxz{\varpi^{d}}{0}{0}{1})\varphi^0 $. 
Then there exists a positive number $\delta>0$ such that for $d=-n$,
\begin{equation}
I(\Phi,\Omega)\ll q^{-\delta n}
\end{equation}
as $n\rightarrow \infty.$
\end{prop}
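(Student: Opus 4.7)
The plan is to reduce the integral to one involving the newform matrix coefficient $\Phi^0$ and then exploit the exponential decay of $\Phi^0$ along the split Cartan. By unitarity of $\pi$ with $g=\zxz{\varpi^{-n}}{0}{0}{1}$ one has $\Phi(h)=\Phi^0(g^{-1}hg)$, so
\[
I(\Phi,\Omega)=\int\limits_{\F^*\backslash\E^*}\Phi^0(g^{-1}eg)\,\Omega(e)\,de,\qquad g^{-1}eg=\zxz{a}{b\varpi^n}{bD\varpi^{-n}}{a}
\]
for $e=a+b\sqrt{D}$. Normalizing a representative of the coset $\F^*e\in\F^*\backslash\E^*$ so that $\min(v(a),v(b))=0$, Lemma \ref{lemofcosetdecomp} together with bi-$K_0(\varpi^{c(\pi)})$-invariance and the central character of $\Phi^0$ reduce the evaluation of $\Phi^0(g^{-1}eg)$ to the value of $\Phi^0$ on a standard diagonal element $\zxz{\varpi^j}{0}{0}{1}$.

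An elementary-divisor computation determines $j$: when $v(a)=0$ and $0\leq v(b)\leq n-v(D)$ one gets $j=2(n-v(b)-v(D))$; when $v(a)>0$ and $v(b)=0$ one gets $j=2n-v(D)$; and when $v(a)=0$ and $v(b)>n-v(D)$ the matrix $g^{-1}eg$ lies in $Z\cdot K_0(\varpi^{c(\pi)})$, so $|\Phi^0(g^{-1}eg)|\leq 1$. The key analytic input is the bound
\[
|\Phi^0(\zxz{\varpi^j}{0}{0}{1})|\ll (1+j)\,q^{-\delta' j}\qquad (j\geq 0)
\]
for some $\delta'>0$, valid whenever $\pi$ is unramified or special unramified: for $\pi$ an unramified principal series this is Macdonald's formula with $\delta'=1/2-|\Re(s)|$, positive thanks to the elementary bound $|\Re(s)|<1/2$ for local unitary unramified principal series; for $\pi$ a (twist of the) Steinberg representation, it follows with $\delta'=1/2$ from the explicit Kirillov model of the newform.

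Finally, I would split the integration over $\F^*\backslash\E^*$ into the three shells above. Under the normalization $\Vol(\F^*\backslash\E^*)=1$, each shell has volume $\ll q^{-\max(v(a),v(b))}$. The shell $v(b)>n-v(D)$ has volume $\ll q^{-n}$ and trivial matrix-coefficient bound, contributing $\ll q^{-n}$; the shell $v(a)>0$ contributes at most a uniformly bounded volume times $q^{-\delta'(2n-v(D))}$; and summing the contributions of the shells $v(b)=\ell$ with $0\leq \ell\leq n-v(D)$ gives a geometric sum in $\ell$ whose total is majorized by $q^{-2\delta' n}$ (up to a harmless $\log$ factor in the borderline case $2\delta'=1$). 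Choosing any $\delta\in(0,\min(2\delta',1))$ then yields $I(\Phi,\Omega)\ll q^{-\delta n}$. The main obstacle is securing the decay of $\Phi^0$ with an explicit positive $\delta'$ in the non-tempered (complementary series) range; this ultimately rests on the elementary unitarity bound for local unramified principal series together with a direct Kirillov-model check for Steinberg. A minor bookkeeping concern is the precise normalization of Haar measure on $\F^*\backslash\E^*$ in the inert versus ramified cases, but it only affects implied constants.
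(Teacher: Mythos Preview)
Your argument for the field-extension case is essentially the paper's: both exploit the exponential decay of the newform matrix coefficient together with volume estimates on shells in $\F^*\backslash\E^*$. Your bookkeeping via the Cartan coordinate $j$ is slightly more refined than the paper's crude two-piece split ($v(b)-v(a)\geq n/2$ versus $<n/2$), and your appeal to the local unitarity bound $|\Re(s)|<1/2$ is in fact cleaner than the paper's invocation of the Kim--Sarnak exponent $\alpha=7/64$, which strictly speaking presupposes that $\pi$ arises as a local component of a global cuspidal representation.

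There is, however, a genuine gap: you implicitly assume $\E_v/\F_v$ is a field extension---your normalization $\Vol(\F^*\backslash\E^*)=1$ and your shell-volume estimate $\ll q^{-\max(v(a),v(b))}$ both depend on this, and your closing remark only mentions the inert and ramified cases. The proposition is also meant to cover the split case $\E_v\simeq\F_v\times\F_v$, and the paper treats it separately because $\F^*\backslash\E^*$ then has infinite volume: the locus $v(a)=v(b)=0$ with $a\equiv\pm b\sqrt{D}\pmod{\varpi}$ cannot be controlled by ``bounded volume times one matrix-coefficient value.'' The paper's fix is to further stratify this locus by the depth $j$ of the congruence $a\equiv\pm b\sqrt{D}\pmod{\varpi^j}$; each stratum has volume $O(1)$ while the Cartan parameter grows like $2n+j$, so the additional decay of $\Phi^0$ yields a convergent geometric series in $j$. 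Without this extra step your argument does not close in the split case, which is needed for the application in Theorem~\ref{them:massequi} (the paper explicitly drops the hypothesis that $\E$ be nonsplit at the relevant prime).
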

\begin{proof}
It's actually possible to figure out $\delta$ explicitly. For our application however, we will just show that such $\delta$ exists. This result essentially follows from the decay of matrix coefficient in general. For simplicity we only work with the case of unramified representation, whereas the case of special unramified representation case is similar and a little more complicated.

Recall that the matrix coefficient $\Phi^0$ for a spherical element is bi-$K$ invariant and we have the following decay of matrix coefficient
\begin{equation}
  \Phi^0(\zxz{x}{0}{0}{1})\ll q^{(\alpha-1/2+\epsilon)|v(x)|}
 \end{equation}
Here $\alpha$ is a bound towards Ramanujan conjecture and we can take $\alpha=7/64$. This formula is actually true for fixed general element in general representation. We shall fix a small $\epsilon$ and take $\delta_0=-\alpha+1/2-\epsilon>3/8$ so that 
\begin{equation}\label{eq:decayofMC}
 \Phi^0(\zxz{x}{0}{0}{1})\ll q^{-\delta_0|v(x)|}.
\end{equation}

Assume that $\E$ is a field extension first. Let $\Phi$ be the matrix coefficient associated to the test vector $\pi(g)\varphi^0$ for $g= \zxz{\varpi^{-n}}{0}{0}{1}$. 
We can assume, with proper twisting like (\ref{eq:twistedlocalint}), that $\pi$ and $\Omega$ are unitary. Then
\begin{align}
|I(\Phi,\Omega)|\leq &\int\limits_{\F^*\backslash\E^*} |\Phi(e)|de
\\
=&\int\limits_{\F^*\backslash\{v(b)-v(a)\geq n/2\}} |\Phi^0 (g^{-1}(a+b\sqrt{D})g)| de
+\int\limits_{\F^*\backslash\{v(b)-v(a)< n/2\}}|\Phi^0(g^{-1}(a+b\sqrt{D})g)|   de.\notag
\end{align}
For the piece of integral over $\F^*\backslash\{v(b)-v(a)\geq n/2\}$, we use the trivial bound  $|\Phi^0|\leq 1$ and 
\begin{equation}
\Vol(\F^*\backslash\{v(b)-v(a)\geq n/2\})\ll q^{-n/2}.
\end{equation}

For the piece of integral over $\F^*\backslash\{v(b)-v(a)< n/2\}$, we use the trivial bound 
\begin{equation}
\Vol(\F^*\backslash\{v(b)-v(a)< n/2\})\leq 1,
\end{equation}
and
\begin{equation}
\Phi^0(g^{-1}(a+b\sqrt{D})g)\ll q^{-\delta_0 n}.
\end{equation}
The latter inequality follows from (\ref{eq:decayofMC}) and 
\begin{equation}
v(\frac{a^2-b^2D}{b^2D^2\varpi^{2d}})=v(\frac{a^2-b^2D}{b^2D^2}\varpi^{2n})\geq n
\end{equation}
in the second part of Lemma \ref{lemofcosetdecomp}. Putting together, we have
\begin{equation}
|I(\Phi,\Omega)|\ll q^{-\delta_0 n}.
\end{equation}

Let's consider the case when $\E$ is split over $\F$ now. In this case, we fix an element $\sqrt{D}$ in the local field $\F$ and assume without loss of generality that $v(D)=0$. For 
\begin{equation}
\gamma=\zxz{1}{-\frac{1}{\sqrt{D}}}{\sqrt{D}}{1},
\end{equation}
we have
\begin{equation}
\gamma^{-1}\zxz{a}{b}{bD}{a} \gamma=\zxz{a+b\sqrt{D}}{0}{0}{a-b\sqrt{D}}.
\end{equation}
Let $u=a+b\sqrt{D}$ and $v=a-b\sqrt{D}$. The Haar measure on the split torus is $d^*ud^*v$. The volume of $\F^*\backslash\E^*$ is  not finite in this case. But as long as $a\nequiv \pm b\sqrt{D}$, $v(u)=v(v)$ and the total volume of such pieces is bounded by $1$. So we can apply exactly same argument as in the field extension case to control the integral on these pieces. 

We assume that $a\equiv \pm b\sqrt{D}$ now. We can assume without loss of generality that $v(a)=v(b)=0$. For any integer $j>0$, each piece $a\in \pm b\sqrt{D}+\varpi^jO_\F$ has volume $1$. But we will have more saving in the matrix coefficient as $j\rightarrow \infty$, as
\begin{equation}
v(\frac{a^2-b^2D}{b^2D^2}\varpi^{2n})\geq 2n+j,\text{\ \ } \Phi^0(g^{-1}(a+b\sqrt{D})g)\ll q^{-\delta_0 (2n+j)}.
\end{equation}
Then it's obvious that the sum over $j$ and $\pm$ signs of the integrals is still controlled by $q^{-\delta_0 n}$.
\end{proof}

\section{Test vectors and evaluation of Waldspurger's local integral on $\GL_2$ side for supercuspidal representations}
In this section we shall provide test vectors for a nontrivial element in $\Hom_{\E^*}(\pi\otimes\Omega,\C)$, in the setting where $\pi$ is supercuspidal with sufficiently large level compared to $\Omega$. But instead of working abstractly in representation theory, we will directly evaluate Waldspurger's local integral on candidates of test vectors. We will show that we can always find test vector for Waldspurger's local integral, as long as $\Hom_{\E^*}(\pi\otimes\Omega,\C)$ is not trivial.

We will first find a test vector of form $\charf_{\eta,d}$ in the Kirillov model.
Note that for supercuspidal representations, vectors of form $\charf_{\eta,d}$ provide a basis.  

The second reason for working with such test vectors is that they can be easily identified as local component of certain globally well-defined automorphic forms. The formula for their matrix coefficient is also easier.

The last reason is that it somewhat simplifies the process to search for test vectors. Note that Waldpurger's local integral gives an element in $\Hom_{\E^*}(\pi\otimes\Omega,\C)\otimes \Hom_{\E^*}(\hat{\pi}\otimes\Omega^{-1},\C)$. So to get a nonvanishing result for Waldspurger's local integral means we are finding test vectors simultaneously for both $\Hom_{\E^*}(\pi\otimes\Omega,\C)$ and $\Hom_{\E^*}(\hat{\pi}\otimes\Omega^{-1},\C)$. This seems a more difficult task, but we have the following lemma.
\begin{lem}
Assume that $w_\pi= \Omega|_{\F^*}=1$. If there exists test vectors $\charf_{\chi,n}, \charf_{\eta,m}\in \pi$ such that
\begin{equation}\label{eq:nonzerolocalint}
\int\limits_{\E^*}<\pi(e)\charf_{\chi,n}, \charf_{\eta,m}>\Omega(e)de\neq 0,
\end{equation}
then
\begin{equation}
J=\int\limits_{\E^*}<\pi(e)\charf_{\eta,m}, \charf_{\chi,n}>\Omega(e)de\neq 0
\end{equation}
\end{lem}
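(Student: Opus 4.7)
The plan is to show that $J$ is literally the complex conjugate of the integral assumed to be nonzero, hence also nonzero. The three tools I need are: (a) the invariant unitary pairing on $\pi$ is Hermitian and satisfies $\langle \pi(e)v,w\rangle = \langle v,\pi(e^{-1})w\rangle$; (b) under the hypothesis $\Omega|_{\F^*}=1$, the character $\Omega$ is unitary on $\F^*\backslash\E^*$, so $\overline{\Omega(e)}=\Omega^{-1}(e)$; and (c) the Haar measure on the abelian group $\F^*\backslash\E^*$ is invariant under $e\mapsto e^{-1}$. Implicit here is that the integral $\int_{\E^*}$ in the statement should be read modulo the center: because $w_\pi=\Omega|_{\F^*}=1$, the integrand is genuinely a function on $\F^*\backslash\E^*$, which is what makes the period well defined. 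If $\pi$ is not already unitary, twist as in Section~2.3 so that everything is unitary; by equation (\ref{eq:twistedlocalint}) this does not affect whether the integral vanishes.

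The key computation proceeds in three cosmetic steps. First, perform the change of variable $e\mapsto e^{-1}$ in $J$, turning it into
\begin{equation*}
J=\int_{\E^*}\langle \pi(e^{-1})\charf_{\eta,m},\charf_{\chi,n}\rangle \,\Omega^{-1}(e)\,de.
\end{equation*}
Second, move $\pi(e^{-1})$ to the other side of the pairing via unitarity, rewriting the matrix coefficient as $\langle \charf_{\eta,m},\pi(e)\charf_{\chi,n}\rangle$. Third, apply Hermitian symmetry of the pairing and $\overline{\Omega(e)}=\Omega^{-1}(e)$ to recognize the integrand as the complex conjugate of $\langle \pi(e)\charf_{\chi,n},\charf_{\eta,m}\rangle\,\Omega(e)$. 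Pulling the conjugate out of the integral identifies
\begin{equation*}
J=\overline{\int_{\E^*}\langle \pi(e)\charf_{\chi,n},\charf_{\eta,m}\rangle\,\Omega(e)\,de},
\end{equation*}
so the nonvanishing of $J$ is immediate from the hypothesis (\ref{eq:nonzerolocalint}).

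There is really no hard step; the only thing one has to be attentive to is bookkeeping of the unitarity assumptions and of the quotient by the center, which are both automatic in the setting of Waldspurger's formula after the standard preliminary twisting discussed in Section~2.3. The argument is purely formal and does not use any special feature of the basis vectors $\charf_{\chi,n}$, $\charf_{\eta,m}$ beyond the fact that the matrix coefficient against them is integrable against $\Omega$ over $\F^*\backslash\E^*$.
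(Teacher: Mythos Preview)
Your proof is correct, and in fact it is cleaner than the paper's own argument. After the same opening move (unitarity of $\pi$ to shift $\pi(e)$ across the pairing), the paper arrives at
\[
J=\overline{\int_{\E^*}\langle \pi(e)\charf_{\chi,n},\charf_{\eta,m}\rangle\,\Omega(e^{-1})\,de},
\]
and then takes a detour: it uses $\Omega(e^{-1})=\Omega(\bar e)$ (from $\Omega|_{\F^*}=1$), changes variable $e\mapsto\bar e$, observes that $\bar e=\gamma e\gamma^{-1}$ with $\gamma=\left(\begin{smallmatrix}-1&0\\0&1\end{smallmatrix}\right)$, and finally uses that $\gamma$ acts on each $\charf_{\chi,n}$ by a nonzero scalar. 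Your route bypasses this Galois-conjugation step entirely by invoking unitarity of $\Omega$ on the compact quotient $\F^*\backslash\E^*$, which yields $J=\bar I$ directly. The payoff of your approach is that it works for arbitrary vectors in $\pi$, not only the Kirillov basis vectors $\charf_{\chi,n}$, exactly as you note; the paper's route genuinely uses the eigen-behaviour of these vectors under $\gamma$. Conversely, the paper's argument avoids explicitly appealing to unitarity of $\Omega$, but since that unitarity is automatic here (the quotient is compact in the nonsplit case under consideration), this is not a real advantage.
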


\begin{rem}
This lemma implies that if $\charf_{\chi,n}$ is a test vector for $\Hom(\pi\otimes \Omega, \C)$, it's automatically a test vector for $\Hom(\pi\otimes \Omega^{-1}, \C)$.
As a consequence, it would be enough to just test on matrix coefficient associated to the same vector, which is the case in Proposition \ref{propofsupportofMC} and \ref{propofMConTorus}. And if all matrix coefficients of this form fail, the Waldspurger's local integral will be trivial.
\end{rem}
\begin{proof}
\begin{align}
J&=\int\limits_{\E^*}<\charf_{\eta,m}, \pi(e^{-1})\charf_{\chi,n}>\Omega(e)de\\
&=\overline{\int\limits_{\E^*}<\pi(e)\charf_{\chi,n}, \charf_{\eta,m}>\Omega(e^{-1})de}\notag
\end{align}
By assumption, $\Omega(e)\Omega(\overline{e})=\Omega(N_{\E/\F}(e))=1$, so $\Omega(e^{-1})=\Omega(\overline{e})$. Then
\begin{equation}
J=\overline{\int\limits_{\E^*}<\pi(\overline{e})\charf_{\chi,n}, \charf_{\eta,m}>\Omega(e)de}.
\end{equation}
When $e=a+b\sqrt{D}=\zxz{a}{b}{bD}{a}$, $$\overline{e}=\zxz{a}{-b}{-bD}{a}=\zxz{-1}{0}{0}{1}e\zxz{-1}{0}{0}{1}.$$
Thus
\begin{equation}
J=\overline{\int\limits_{\E^*}<\pi(e)\pi(\zxz{-1}{0}{0}{1})\charf_{\chi,n}, \pi(\zxz{-1}{0}{0}{1})\charf_{\eta,m}>\Omega(e)de}.
\end{equation}
Now note that $\zxz{-1}{0}{0}{1}$ acts on $\charf_{\chi,n}$ or $\charf_{\eta,m}$ by a simple nonzero multiple. So the condition in the lemma directly implies the nonvanishing of $J$.
\end{proof}

\subsection{Matrix coefficient on torus}
We shall consider now the matrix coefficient for $\charf_{\eta,0}$. We will work with the case $c(\eta)\leq c(\pi)/2$ (which turns out to be enough).
Let $\Phi_\eta $ denote the matrix coefficient associated to $\charf_{\eta,0}$. It's related to the matrix coefficient of a newform from twisted representation by (\ref{formulatwistMC}). 

We need to write down the value of $\Phi_\eta$ on conjugated torus more explicitly.
Because of the vanishing result in Proposition \ref{prop:vanishinglocalint}, we shall pick $d=k$.  According to Remark \ref{rem:twistforlocalint}, we can assume $w_\pi$ to be at most level $1$. For simplicity of notations, however, we will assume $w_\pi$ to be trivial in the following.
\begin{prop}\label{propofMConTorus}
Let $c(\pi)=2k$ or $2k+1$. Pick $d=k$. For $i=v(\frac{bD\varpi^k}{a})\geq c/2$,
\begin{align}
\Phi_\eta(\zxz{a}{b\varpi^{-k}}{bD\varpi^{k}}{a} )=\sum\limits_{\chi}\eta(\frac{a^2-b^2D}{a^2})\chi(\frac{b^2D}{a^2-b^2D})C_{\chi\eta^{-1}}C_\eta\int_{i-c}\psi^-\chi^{-1}\int_{i-c}\psi\chi^{-1}
\end{align}
where the sum is over level $c-i$ characters (and also level 0 character if $i=c-1$). 

For $i=v(\frac{bD\varpi^k}{a})\leq c/2$,
\begin{align}
\Phi_\eta(\zxz{a}{b\varpi^{-k}}{bD\varpi^{k}}{a} )
&=\sum_\chi C_{\eta\chi}\eta(\frac{a^2-b^2D}{a^2})\int_{-i}\psi\chi^{-1}\int\limits_{v(\alpha)=0}\psi(\varpi^{i-c}\frac{a^2}{a^2-b^2D}\alpha)\chi^{-1}\eta^{-2}(\alpha)d^*\alpha\\
&=\sum_\chi (\eta\chi)(\frac{a^2}{a^2-b^2D})C_{\eta\chi}\int_{-i}\psi\chi^{-1}\int_{-i}\psi\eta^{-2}\chi^{-1},
\end{align}
where the sum is over level $i$ characters (and also level 0 character if $i=1$).

\end{prop}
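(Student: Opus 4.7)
The strategy is to compute $\Phi_\eta(g) = \langle \pi(g)\charf_{\eta,0}, \charf_{\eta,0}\rangle$ directly in the Kirillov model. First apply Lemma \ref{lemofcosetdecomp} to express the conjugated torus element in the Iwasawa form $a \cdot \zxz{a_1}{m}{0}{1}\zxz{1}{0}{\varpi^i}{1}\zxz{\gamma}{0}{0}{1}$; since $w_\pi=1$, the scalar $a$ drops out, and the outer Borel factors are handled by the explicit Kirillov formula $\pi(\zxz{a_1}{m}{0}{a_2})\varphi(x) = \psi(ma_2^{-1}x)\varphi(a_1 a_2^{-1}x)$. The right-hand diagonal factor simply rescales the argument of $\charf_{\eta,0}$ and contributes a character value.

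The heart of the computation is the action of $\pi(\zxz{1}{0}{\varpi^i}{1})$. The plan is to use a Bruhat-style decomposition
\[
\zxz{1}{0}{\varpi^i}{1} = (\text{upper triangular}) \cdot \omega \cdot \zxz{1}{\varpi^{-i}}{0}{1},
\]
so that the right unipotent multiplies $\charf_{\eta,0}(x)$ by $\psi(\varpi^{-i}x)$, and $\omega$ acts via (\ref{eq:supercuspidalomegaaction}). To apply $\omega$ one must first expand the product $\psi(\varpi^{-i}x)\charf_{\eta,0}(x)$ in the orthonormal basis $\{\charf_{\chi\eta,0}\}$ of $L^2(O^*)$ by Fourier inversion over $O^*$; this expansion is precisely where the sum $\sum_\chi$ and one Gauss integral of the form $\int_{-i}\psi\chi^{-1}$ enter. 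The $\omega$-action then produces the coefficient $C_{(\chi\eta)w_0^{-1}} = C_{\chi\eta^{-1}}$ or $C_{\eta\chi}$ (depending on which range of $i$ we are in) and shifts the support to $\charf_{(\chi\eta)^{-1}, n_{\chi\eta}}$, whose index $n_{\chi\eta}$ controls the surviving annulus of the Kirillov pairing.

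Finally, apply the remaining left upper-triangular factors from the Bruhat decomposition and from the outer Iwasawa factor; these multiply by additional $\psi$-characters which repackage as $\eta(\frac{a^2-b^2D}{a^2})$ and $\chi(\frac{b^2D}{a^2-b^2D})$. Then pair against $\charf_{\eta,0}$: the pairing picks out a specific valuation annulus, and a second Fourier expansion over $O^*$ produces the second Gauss integral. For $i\geq c/2$, the $\omega$-action lands on the index $i-c$ on both sides (giving the two integrals $\int_{i-c}\psi\chi^{-1}$ and $\int_{i-c}\psi^{-}\chi^{-1}$ together with $C_{\chi\eta^{-1}}C_{\eta}$). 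For $i\leq c/2$, the $\omega$-action lands on index $-i$; only one side contributes a twisting through $\omega$ (giving $C_{\eta\chi}$), while the other side pairs directly on the Kirillov line against $\charf_{\eta,0}$ and therefore picks up an extra factor of $\eta^{-2}$ in the second Gauss integral.

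The main obstacle is the case dichotomy $i\geq c/2$ vs.\ $i\leq c/2$: the formula $n_\nu = -\max\{c(\pi),2c(\nu)\}$ changes regime at $c(\nu)=c/2$, and this determines both the level of the surviving characters $\chi$ (level $c-i$ in the first case, level $i$ in the second) and the direction of the support shift produced by $\omega$. The boundary cases $i=c-1$ and $i=1$ require adding level-$0$ characters to the sum, reflecting the extra support components of the newform matrix coefficient noted in Proposition \ref{propofsupportofMC} and Remark \ref{rem:supportofMC}; tracking these boundary contributions and verifying that the $C_\nu$-coefficients line up correctly via (\ref{Cs&epsilon}) is the subtlest part of the calculation.
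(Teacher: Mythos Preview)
Your overall strategy is right and matches the paper: reduce via Lemma~\ref{lemofcosetdecomp}, strip off the outer Borel factors using the Kirillov action, and compute $\pi\bigl(\begin{smallmatrix}1&0\\\varpi^i&1\end{smallmatrix}\bigr)\charf_{\eta,0}$ by rewriting the lower unipotent in terms of $\omega$ and upper-triangular pieces, Fourier expanding, and applying \eqref{eq:supercuspidalomegaaction}. For $i\leq c/2$ your single-$\omega$ Bruhat decomposition is essentially what the paper does (it writes the same thing as a diagonal conjugate of the $i=1$ case), and the description you give there is correct.

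The gap is in the case $i\geq c/2$. Your decomposition has the rightmost unipotent entry $\varpi^{-i}$, so the first Fourier expansion of $\psi(\varpi^{-i}x)\charf_{\eta,0}(x)$ runs over characters $\mu$ of level $i$ and produces Gauss integrals $\int_{-i}\psi\mu^{-1}\eta$. A single application of $\omega$ then contributes a single factor $C_\mu$. This does \emph{not} match the stated formula, which is a sum over $\chi$ of level $c-i$ with two constants $C_{\chi\eta^{-1}}C_\eta$ and Gauss integrals at level $i-c$. Your narrative is internally inconsistent here: you assert the Fourier step gives $\int_{-i}\psi\chi^{-1}$, then claim the $i\geq c/2$ formula has integrals at level $i-c$, without any mechanism bridging the two.

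The paper's point is that one must use \emph{two different} factorizations of $\begin{pmatrix}1&0\\\varpi^i&1\end{pmatrix}$. For $i\geq c/2$ it uses the identity
\[
\begin{pmatrix}1&0\\\varpi^i&1\end{pmatrix}=-\,\omega\begin{pmatrix}1&-\varpi^i\\0&1\end{pmatrix}\omega,
\]
with \emph{two} copies of $\omega$. The first $\omega$ acts on $\charf_{\eta,0}$ and immediately produces $C_\eta$ and shifts the support to $v(x)=-c$; the middle unipotent then has entry $-\varpi^i$ of \emph{positive} valuation, so on this support it contributes $\psi(-\varpi^{i-c}u)$, whose Fourier expansion runs over characters of level $c-i$ --- exactly the range in the proposition. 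The second $\omega$ supplies $C_{\chi\eta^{-1}}$. This is why the two regimes $i\gtrless c/2$ require separate decompositions: the choice is dictated by whether one wants the Fourier expansion to occur at level $i$ or at level $c-i$, and only the latter yields the formula as stated.
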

\begin{proof}
Suppose that $i\geq c/2$ first.
In general we can define the matrix coefficient via
\begin{equation}
\Phi_\eta(g)=\int\limits_{\F^*}\pi(g)\charf_{\eta,0}\overline{\charf_{\eta,0}}d^*x.
\end{equation}
Note that for $a\in O_F^*$,
\begin{equation}
\pi(\zxz{a}{0}{0}{1})\charf_{\eta,0}=\eta(a)\charf_{\eta,0}.
\end{equation}
Combining Lemma \ref{lemofcosetdecomp}, it would be enough to know the following
\begin{equation}
\Phi(\zxz{1}{m}{0}{1}\zxz{1}{0}{\varpi^i}{1})
=\int\limits_{v(x)=0}\psi(m x)\pi(\zxz{1}{0}{\varpi^i}{1})\charf_{\eta,0}(x)\overline{\eta}(x)d^*x.
\end{equation}
Then we just use that
\begin{equation}
\zxz{1}{0}{\varpi^i}{1}=-\zxz{0}{1}{-1}{0}\zxz{1}{-\varpi^i}{0}{1}\zxz{0}{1}{-1}{0},
\end{equation}
and compute the action of $\zxz{1}{0}{\varpi^i}{1}$ step by step. (And use Fourier expansion for the action of $\zxz{1}{-\varpi^i}{0}{1}$.) We shall skip the details here. 

When $i\leq c/2$, the computations are very similar, except that we shall now use
\begin{align}
\zxz{1}{0}{\varpi^i}{1}&=\zxz{\varpi^{-i}}{0}{0}{1}\zxz{1}{0}{1}{1}\zxz{\varpi^{i}}{0}{0}{1}\\
&=-\zxz{\varpi^{-i}}{0}{0}{1}\zxz{1}{1}{0}{1}\zxz{0}{1}{-1}{0}\zxz{1}{1}{0}{1}\zxz{\varpi^{i}}{0}{0}{1}.\notag
\end{align}
\end{proof}

%

\subsection{inert extension}\label{secSCinert}

Note that there are now two quadratic extension appearing: the field $\E$ on which $\Omega $ is defined, and $\E'$ together with a character $\theta$ which defines $\pi$. When $\E$ is unramified and $c(\pi)$ is even, $\E'=\E$. We shall first give a lemma telling us what to expect for local integral. It follows directly from the calculation in \cite{Tu83}.
\begin{lem}\label{lem:epsiloninertsc}
Suppose that $\Omega$ is defined over an inert extension $\E$.
\begin{enumerate}
\item If $\pi$ is of level $2k+1$ and $c(\Omega)\leq k$, then $\epsilon(\Pi\otimes\Omega,1/2)=-1;$
\item If $\pi$ is of level $2k$ and $c(\Omega)<k$, then $\epsilon(\Pi\otimes\Omega,1/2)=1.$
\end{enumerate}
\end{lem}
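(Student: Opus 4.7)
The plan is to compute $\epsilon(\Pi\otimes\Omega,1/2)$ explicitly by reducing to epsilon factors of characters via the compact-induction description of $\pi$ (Section 2.3.2) and the Mackey/base-change compatibility of $L$-parameters. After an appropriate twist to reduce to the minimal case, I would write $\pi = \pi_\theta$ with $\theta$ a character on a quadratic extension $\E'/\F$: the key dichotomy is that $\E'/\F$ is inert when $c(\pi) = 2k$ (case (2)) and ramified when $c(\pi) = 2k+1$ (case (1)), while $\E/\F$ is inert throughout.

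Consequently in case (2) one has $\E' = \E$ and the base change $\Pi$ is the principal series $\pi(\theta,\theta^\sigma)$ induced from the pair $(\theta,\theta^\sigma)$, where $\sigma$ is the nontrivial element of $\operatorname{Gal}(\E/\F)$; while in case (1) one has $\E' \neq \E$ and $\Pi = \pi_{\tilde\theta}$ is again dihedral, associated to the character $\tilde\theta = \theta\circ N_{\E\E'/\E'}$ on the biquadratic composite $\E\E'$, with $\E\E'/\E$ ramified. In case (2), the inductivity of epsilon factors gives
$$\epsilon(\Pi\otimes\Omega,1/2,\psi_\E) = \epsilon(\theta\Omega,1/2,\psi_\E)\,\epsilon(\theta^\sigma\Omega,1/2,\psi_\E),$$
and I would apply Lemma \ref{lemoftwocharGaussint} on $\E$ (valid because $c(\Omega) < k = c(\theta) = c(\theta^\sigma)$) to factor the $\Omega$-dependence out as evaluation at an element of $\F^*$, which is then pinned down by $\Omega|_{\F^*} = w_\pi^{-1}$. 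The remaining symmetric product $\epsilon(\theta,1/2,\psi_\E)\epsilon(\theta^\sigma,1/2,\psi_\E)$ is handled by the change of variable $x\mapsto x^\sigma$ on $\E$ (which preserves $\psi_\E = \psi\circ\operatorname{Tr}$), reducing it to $|\epsilon(\theta,1/2,\psi_\E)|^2 = 1$, and thus yielding $\epsilon(\Pi\otimes\Omega,1/2) = +1$.

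In case (1), I would invoke Lemma \ref{thmoflocalepsilon} directly in the $\GL_2(\E)$ setting with $\E\E'/\E$ as the relevant ramified quadratic extension and $\tilde\theta \cdot (\Omega\circ N_{\E\E'/\E})$ as the defining character; equivalently, this is the Langlands $\lambda$-factor calculation for the ramified quadratic $\E\E'/\E$ combined with the epsilon factor of the resulting character on $\E\E'$. The prefactor $(-1)^{e\cdot n}$ in that lemma, together with the sign of the stationary-phase evaluation of the Gauss integral on $\E\E'$, should produce an overall sign $-1$ that is moreover independent of $\Omega$ within the range $c(\Omega)\le k$; this explains the observed failure of the Tunnell-Saito test.

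The main obstacle will be the delicate sign-bookkeeping in case (1): one needs to carefully track how the level of $\tilde\theta\cdot(\Omega\circ N_{\E\E'/\E})$ on $\E\E'$ interacts with the ramification index $e=2$ of $\E\E'/\E$ and with the normalization of $\psi_{\E\E'} = \psi_\E\circ\operatorname{Tr}_{\E\E'/\E}$, in order to isolate the uniform $-1$ sign; verifying the $\Omega$-independence amounts to checking that an application of Lemma \ref{lemoftwocharGaussint} on $\E\E'$ once more absorbs the $\Omega$-dependence into a Galois-invariant factor killed by $\Omega|_{\F^*} = w_\pi^{-1}$.
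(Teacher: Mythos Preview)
The paper does not actually prove this lemma: the sentence preceding it reads ``It follows directly from the calculation in \cite{Tu83},'' and no argument is given. So your proposal is not a reconstruction of the paper's proof but rather an independent attempt, essentially redoing part of Tunnell's computation via the compact-induction parametrization and explicit Gauss sums. That is a reasonable and more hands-on route than a bare citation, and your identification of the base change in each case (principal series $\pi(\theta,\theta^\sigma)$ when $\E'=\E$, dihedral from $\E\E'/\E$ when $\E'\neq\E$) is correct.

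That said, two steps in your sketch do not go through as written. First, the substitution $x\mapsto x^\sigma$ in the Gauss integral shows $\epsilon(\theta^\sigma,\psi_\E)=\epsilon(\theta,\psi_\E)$, so the product $\epsilon(\theta,\psi_\E)\epsilon(\theta^\sigma,\psi_\E)$ equals $\epsilon(\theta,\psi_\E)^2$, \emph{not} $|\epsilon(\theta,\psi_\E)|^2$; these agree only when $\epsilon(\theta,\psi_\E)$ is real, which is not automatic. The fix is to use the central-character condition: after reducing to $w_\pi=1$ one has $\theta|_{\F^*}=1$, hence $\theta^\sigma=\theta^{-1}$, and then $\epsilon(\theta)\epsilon(\theta^\sigma)=\epsilon(\theta)\epsilon(\theta^{-1})=\theta(-1)=1$ by the standard functional equation for abelian epsilon factors. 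Second, your appeal to Lemma~\ref{lemoftwocharGaussint} to strip off $\Omega$ requires $c(\theta)\geq 2c(\Omega)$, i.e.\ $k\geq 2c(\Omega)$, which is strictly stronger than the hypothesis $c(\Omega)<k$; so as stated your argument for case~(2) only covers part of the range. Tunnell's original computation in \cite{Tu83} proceeds instead via the character formula for $\pi_\theta$ on the elliptic torus $\E^*$, which does not run into this restriction, and this is presumably why the paper is content to cite it.
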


Note that in this case $c(\pi_\Omega)=2c(\Omega)$. From now on we assume that $c(\pi)=2k$ is even.

\begin{prop}\label{proptestvecinertsc}
Assume that $2\nmid q$. Suppose that $\E$ is inert, $k\geq 2$, $c(\pi)=2k$ is even and $k>  c(\Omega)$. Then there exists a test vector of form $\charf_{\eta,-k}$ in the Kirillov model for $c(\eta)\leq k$ such that
\begin{equation}
I\geq \frac{1}{(q^2-1)q^{k-2}}.
\end{equation}

\end{prop}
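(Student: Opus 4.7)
The plan is to directly evaluate Waldspurger's local integral $I(\Phi_\eta,\Omega)$ for a carefully chosen $\eta$ of level $\le k$, using the explicit matrix-coefficient formula from Proposition \ref{propofMConTorus}. First I would note that Proposition \ref{prop:vanishinglocalint} forces the diagonal twist $d=k$, so the matrix coefficient of $\charf_{\eta,-k}=\pi(\zxz{\varpi^k}{0}{0}{1})\charf_{\eta,0}$ equals $\Phi_\eta(g^{-1}\cdot g)$ with $g=\zxz{\varpi^k}{0}{0}{1}$. Since $\E$ is inert with $v(D)=0$, I would parametrize $\F^*\backslash\E^*$ (volume $1$) by $b\in O_\F$ via $e=1+b\sqrt{D}$, together with a secondary chart $e=a+\sqrt{D}$ for $a\in\varpi O_\F$; with the measure normalization $\Vol(\F^*\backslash\E^*)=1$, the main chart carries total mass $q/(q+1)$. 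On the main chart, setting $i:=v(b)+k\geq k$ puts us in the $i\geq c/2$ branch of Proposition \ref{propofMConTorus}.

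Next I would decompose $I$ into slices $v(b)=j\geq 0$. On each slice the integrand is
\begin{equation*}
\sum_{\chi\text{ of level }k-j} C_{\chi\eta^{-1}}C_\eta\cdot J_j(\chi)\cdot\eta(1-b^2D)\,\chi\Bigl(\tfrac{b^2D}{1-b^2D}\Bigr)\,\Omega(1+b\sqrt{D}),
\end{equation*}
where $J_j(\chi)=\int_{j-k}\psi^-\chi^{-1}\int_{j-k}\psi\chi^{-1}$ satisfies $|J_j(\chi)|=q^{j-k+2}/(q-1)^2$ by Lemma \ref{lemofGaussint}, and $|C_*|=1$ by (\ref{Cs&epsilon}). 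Each inner $b$-integral is a character sum that, by orthogonality, vanishes unless the total character of $b\in\{v(b)=j\}$ is trivial. The plan is to focus on the slice $j=0$ (characters $\chi$ of level $k$) and use the surjectivity in Lemma \ref{lem:surjective}, together with the dependence of $C_{\chi\eta^{-1}}$ on $\alpha_\eta$ from Corollary \ref{corofquotientofC} and the expansion of $\Omega(1+b\sqrt{D})$ via Lemma \ref{lemofchiE}, to pick $\eta$ so that a single $\chi=\chi_0$ produces a trivial total character. The hypothesis $k>c(\Omega)$ combined with the non-obstruction from Lemma \ref{lem:epsiloninertsc} should guarantee such a matching exists.

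With this $\eta$, the $j=0$ contribution should dominate and have absolute value
\begin{equation*}
\Vol(\{v(b)=0\})\cdot|J_0(\chi_0)|=\tfrac{q-1}{q+1}\cdot\tfrac{1}{(q-1)^2q^{k-2}}=\tfrac{1}{(q^2-1)q^{k-2}},
\end{equation*}
matching the claimed lower bound. Contributions from higher slices $j\geq 1$ and from the secondary chart should either vanish (by the same orthogonality applied to non-matching $\chi$) or be strictly smaller, since the Gauss-sum growth $|J_j|\asymp q^{j-k}$ is beaten by slice volume $\asymp q^{-j}$; I would conclude by a triangle-inequality estimate that these smaller pieces cannot cancel the main term.

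The hard part will be the third step: verifying that a single level-$k$ character $\chi_0$ can be picked off by an appropriate $\eta$ of level $\le k$. This requires matching the constants $\alpha_\theta$ (from the character $\theta$ defining $\pi$), $\alpha_\eta$, and data from $\Omega$ modulo $\varpi^{\lfloor k/2\rfloor}$. The epsilon-value obstruction in Lemma \ref{lem:epsiloninertsc} precisely encodes when this matching is solvable; checking solvability under the hypothesis $k>c(\Omega)$, and then ruling out phase cancellation between the main term and the subdominant pieces, is the crux of the argument.
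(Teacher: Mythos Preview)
Your size hierarchy is the fatal gap. You assert that the slice $j=0$ dominates because ``Gauss-sum growth $|J_j|\asymp q^{j-k}$ is beaten by slice volume $\asymp q^{-j}$,'' but the product $q^{j-k}\cdot q^{-j}=q^{-k}$ is \emph{constant} in $j$: every slice that survives orthogonality contributes at the same scale $\frac{1}{(q^2-1)q^{k-2}}$. In fact the paper computes the contribution from your ``higher slices'' $j\ge 1$ (its $I_1$) in closed form and finds $|I_1|/((q+1)q^{k-1})=\frac{2}{(q^2-1)q^{k-2}}$ when nonzero, i.e.\ twice your main term. So your final triangle-inequality step cannot rule out cancellation, and the argument does not close.

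The paper's proof runs in the opposite direction. It shows that on the slices $1\le j\le k-2$ the sum in $b$ vanishes because $\chi(\frac{b^2D}{1-b^2D})$ has level $k-j\ge 2$ in $b$, strictly larger than the levels of $\eta(1-b^2D)$ and $\Omega(1+b\sqrt D)$; hence only $j=k-1,k$ survive, where $\chi\in\{1,\quadchar\}$. This yields the closed form
\[
I_1=\frac{q}{q-1}\Bigl[1-\frac{C_{\quadchar\eta^{-1}}}{C_{\eta^{-1}}}\Bigr],
\]
and Corollary \ref{corofquotientofC} evaluates the ratio as $\quadc{\alpha_\eta^2-\alpha^2 D}$ (with $\alpha_\theta=\alpha\sqrt D$). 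The argument then splits on $\quadc{-1}$: if $\quadc{-1}=1$ one takes $c(\eta)\le 1$ and gets $I=\frac{4}{(q^2-1)q^{k-2}}$ directly; if $\quadc{-1}=-1$ one must take $c(\eta)=k$ with $\alpha_\eta$ chosen so that $\alpha_\eta^2-\alpha^2 D$ is a nonsquare. The complementary piece $I_2$ (your $j=0$ slice plus secondary chart) is of the same size and is \emph{not} evaluated in general; instead the paper observes that a level-$1$ twist $\eta\mapsto\eta_0\eta$ multiplies $I_2$ by $\eta_0(-1)$ while leaving $I_1$ unchanged, so one can choose the sign to prevent cancellation and retain at least half of $I_1$. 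Your plan has no analogue of this sign-flipping mechanism, and your ``isolate a single $\chi_0$'' step at $j=0$ is both unjustified (for $c(\Omega)>0$ there are $\asymp q^{c(\Omega)}$ contributing $\chi$'s near $\eta^{-1}$, cf.\ the paper's formula for $I_2$) and, even if it worked, insufficient to control the equally large $I_1$.
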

Note that when $k=1$ and $\Omega$ is unramified, this is the case covered by Gross and Prasad's paper. Thus we assume $k\geq 2$.

One can easily do a weighted average for the test vector above and use Lemma \ref{lem:normalsubK} to get the following:
\begin{cor}\label{corinertsc}
With the same conditions as above, there exists a non-trivial test vector for any nontrivial element in $\Hom_{\E^*}(\pi\otimes\Omega,\C)$, such that it is invariant under  $K_1^1(\varpi^k,\varpi^k)$, and $\E^*$ acts on it by the character $\Omega^{-1}$.
\end{cor}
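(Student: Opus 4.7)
The plan is to construct the desired test vector by averaging the $\charf_{\eta,-k}$ of Proposition \ref{proptestvecinertsc} against $\Omega$ over $\F^*\backslash\E^*$, and then to extract the three required properties from Lemma \ref{lem:normalsubK} together with a direct invariance computation for $\charf_{\eta,-k}$. I would set $v=\int_{\F^*\backslash\E^*}\Omega(e)\pi(e)\charf_{\eta,-k}\,de$; a change of variable immediately gives $\pi(t)v=\Omega^{-1}(t)v$ for $t\in\E^*$, producing the claimed $\E^*$-equivariance.

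The heart of the argument is to show that $\charf_{\eta,-k}$ itself is right-invariant under $K_1^1(\varpi^k,\varpi^k)$; granted that, Lemma \ref{lem:normalsubK} transports the invariance through the integrand to $v$, because $e^{-1}K_1^1(\varpi^k,\varpi^k)e=K_1^1(\varpi^k,\varpi^k)$ for every $e\in\E^*$ in the inert case. I would write $\charf_{\eta,-k}=\pi\bigl(\zxz{\varpi^k}{0}{0}{1}\bigr)\charf_{\eta,0}$ and first compute the stabilizer of $\charf_{\eta,0}$. Upper-triangular invariance under $\zxz{a_1}{m}{0}{a_2}$ with $a_1/a_2\in 1+\varpi^{c(\eta)}O$ and $v(m)\ge 0$ is immediate from the Kirillov action formula, using $c(\eta)\le k$. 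For the lower-triangular part I would use the identity $\pi(\omega)\charf_{\eta,0}=C_\eta\charf_{\eta^{-1},-2k}$ coming from (\ref{eq:supercuspidalomegaaction}) and (\ref{Cs&epsilon}) together with minimality of $\pi$ and $c(\eta)\le k$ (which force $n_\eta=-c(\pi\otimes\eta^{-1})=-2k$): then invariance of $\charf_{\eta^{-1},-2k}$ under $\zxz{1}{-y}{0}{1}$ for $v(y)\ge 2k$ pulls back through $\pi(\omega)^{-1}$ to invariance of $\charf_{\eta,0}$ under $\zxz{1}{0}{y}{1}$ for the same range. Conjugating by $\zxz{\varpi^k}{0}{0}{1}$ shifts the off-diagonal valuations by $\pm k$, so the resulting stabilizer of $\charf_{\eta,-k}$ contains $K_1^1(\varpi^k,\varpi^k)$ as required.

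For nonvanishing I would pair $v$ against $\charf_{\eta,-k}$: using $\pi(e)v=\Omega^{-1}(e)v$ and the normalization $\Vol(\F^*\backslash\E^*)=1$, the pairing reduces to the Waldspurger local integral $I(\Phi,\Omega)$ for the matrix coefficient of $\charf_{\eta,-k}$, which is bounded below by $1/((q^2-1)q^{k-2})$ by Proposition \ref{proptestvecinertsc}. Hence $v\ne 0$, and the same formula exhibits $w\mapsto\int_{\F^*\backslash\E^*}\Omega(e)\langle\pi(e)w,\charf_{\eta,-k}\rangle\,de$ as a nontrivial element of $\Hom_{\E^*}(\pi\otimes\Omega,\C)$ (unique up to scalar by Theorem \ref{Tunnell}) that does not vanish on $v$. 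The only delicate step is the lower-triangular invariance, which is what requires the $\omega$-trick and the hypothesis that $\pi$ be minimal of level $2k$; everything else is direct assembly.
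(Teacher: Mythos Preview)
Your proposal is correct and follows precisely the approach the paper sketches in the sentence preceding the corollary: form the weighted average $v=\int_{\F^*\backslash\E^*}\Omega(e)\pi(e)\charf_{\eta,-k}\,de$, deduce $K_1^1(\varpi^k,\varpi^k)$-invariance of $v$ from that of $\charf_{\eta,-k}$ via Lemma~\ref{lem:normalsubK}, and certify nontriviality by pairing back against $\charf_{\eta,-k}$ to recover $I(\Phi,\Omega)\neq 0$ from Proposition~\ref{proptestvecinertsc}. Your explicit verification of the $K_1^1(\varpi^k,\varpi^k)$-invariance of $\charf_{\eta,-k}$ (splitting into upper- and lower-triangular pieces, the latter handled by the $\omega$-conjugation together with $n_\eta=-2k$) is exactly the content the paper leaves implicit, and it is carried out correctly.
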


\begin{proof}

Recall in this setting we are picking $$g=  \zxz{\varpi^{k}}{0}{0}{1}$$ and $d=k$. $\Omega $ and $\theta$ are defined over the same inert extension $\E$. We shall pick $\varpi_\E=\varpi$ in this case.

To compute the integral, we use that both functions in the integral are invariant by $1+\varpi^kO_\E$. This is obvious for $\Omega$ by assumption on the levels. For $\Phi$, recall that $\Phi_\eta $ is invariant by $K_0(\varpi^{2k})$. So $\Phi$ is invariant by $$\zxz{\varpi^{k}}{0}{0}{1}K_0(\varpi^{2k})\zxz{\varpi^{-k}}{0}{0}{1},$$
which contains $1+\varpi^kO_\E$ under the standard embedding.

So
\begin{align}\label{formulaofwholesum}
I=&\int\limits_{O_\F^*\backslash O_\E^*}\Phi(e)\Omega(e)de\\
=&\frac{1}{(q+1)q^{k-1}}[\sum\limits_{b\in \varpi O_\F/\varpi^kO_\F}\Phi_\eta (g^{-1}(1+b\sqrt{D})g)\Omega(1+b\sqrt{D})\notag\\
&+\sum\limits_{a\in  O_\F/\varpi^kO_\F}\Phi_\eta (g^{-1}(a+\sqrt{D})g)\Omega(a+\sqrt{D})].\notag
\end{align}
We shall organize the sum according to the valuation of $a$ or $b$, and apply Lemma \ref{lemofcosetdecomp} and Proposition \ref{propofMConTorus} repeatedly.

\deleted{We first consider the case $k\geq 2$.}
When  $a=1$ and $v(b)>0$, we have $c/2<i=v(b)+k\leq c$ in Lemma \ref{lemofcosetdecomp}. In this case, 
\begin{equation}
\Phi_\eta (g^{-1}(1+b\sqrt{D})g)=\sum\limits_{\chi}\eta(1-b^2D)\chi(\frac{b^2D}{1-b^2D})C_{\chi\eta^{-1}}C_\eta\int_{i-c}\psi^-\chi^{-1}\int_{i-c}\psi\chi^{-1}.
\end{equation}
As functions in $b$ for fixed $v(b)$, $\eta(1-b^2D)$ is of smaller level $c(\eta)-2v(b)$, and $\chi(\frac{b^2D}{1-b^2D})$ is of level $c(\chi)=c-i=k-v(b)$, unless $\chi=\quadchar$. On the other hand $\Omega(1+b\sqrt{D})$ is also of smaller level $c(\Omega)-v(b)$. So the sum in $b$ would be zero unless $\chi$ is trivial or $\quadchar$. 

In particular the nonzero contributions will come from $\chi=1$ when $v(b)=k$, and $\chi=1$ or $\quadchar$ when $v(b)=k-1$. Both $\eta(1-b^2D)$ and $\Omega(1+b\sqrt{D})$ would be trivial for these pieces.
Then one can compute that
\begin{align}
I_1&=\sum\limits_{b\in \varpi O_\F/\varpi^kO_\F}\Phi_\eta (g^{-1}(1+b\sqrt{D})g)\Omega(1+b\sqrt{D})\\
&=1+(q-1)[\frac{1}{(q-1)^2}+\quadc{D}C_{\quadchar\eta^{-1}}C_\eta \frac{q}{(q-1)^2}] \notag\\
&=\frac{q}{q-1}[1-\frac{C_{\quadchar\eta^{-1}}}{C_{\eta^{-1}}}].\notag
\end{align}
Here we have used that $C_\eta C_{\eta^{-1}}=1$ and Lemma \ref{lemofGaussint}.

Now fix $b=1$ and $v(a)\geq 0$, and $0\leq i=k-v(a)\leq k$. This time
\begin{equation}
\Phi_\eta (g^{-1}(a+\sqrt{D})g)= \sum_\chi (\eta\chi)(\frac{a^2}{a^2-D})C_{\eta\chi}\int_{-i}\psi(\eta\chi)^{-1}\eta\int_{-i}\psi(\eta\chi)^{-1}\eta^{-1}
\end{equation}
The analysis is similar to the previous case. When $\eta =1$, the nonzero contributions will come from $v(a)=k$, $\chi=1$, and $v(a)=k-1$, $\chi=1$ or $\quadchar$. Then
\begin{align}
I_2&=\sum\limits_{a\in  O_\F/\varpi^kO_\F}\Phi_\eta (g^{-1}(a+\sqrt{D})g)\Omega(a+\sqrt{D})\\
&=\{C_1+(q-1)[C_1\frac{1}{(q-1)^2}+\quadc{-D}C_{\quadchar}\quadc{-1}\frac{q}{(q-1)^2}]\}\Omega(\sqrt{D}) \notag\\
&=\frac{q}{q-1}C_1\Omega(\sqrt{D})[1-\frac{C_{\quadchar}}{C_{1}}]. \notag
\end{align}
Note that both $\Omega(\sqrt{D})$ and $C_1$ are of values $\pm 1$.

When $ 0<c(\eta)< k-c(\Omega)$, the nonzero contributions will come from $v(a)=k-c(\eta)$ and $\chi=\eta^{-1}$ or $\eta^{-1}\quadchar$, since $\Omega$ will be trivial. Then
\begin{equation}
I_2=\frac{q}{(q-1)}\eta(-1)\Omega(\sqrt{D})C_1[1-\frac{C_{\quadchar}}{C_{1}}]
\end{equation}

When $ c(\eta)\geq k-c(\Omega)$, the nonzero contributions will still come from $v(a)=k-c(\eta)$ and $i=c(\eta)$ since this is the only chance $\chi$'s will have same level as $\eta$, thus $\chi\eta$ can be of smaller level, matching the level of $\Omega$. Then we shall write $\chi=\eta^{-1}\nu$, and only care about those $\nu$'s which are of level$\leq c(\Omega)-k+c(\eta)$. Then

\begin{equation}\label{formulaI2}
I_2=\sum\limits_{\nu \text{\ of level}\leq c(\Omega)-k+c(\eta)}\sum\limits_{v(a)=k-c(\eta)}\nu(\frac{a^2}{a^2-D})\Omega(a+\sqrt{D})C_\nu\int_{-c(\eta)}\psi\nu^{-1}\eta\int_{-c(\eta)}\psi\nu^{-1}\eta^{-1}.
\end{equation}

\deleted{The integral representation of the value of matrix coefficient is}

\deleted{$\int \psi\eta(\frac{a^2}{a^2-D}\alpha)\int \theta^{-1}(x)\psi\eta^{-1}(\frac{1}{\alpha}N(x))\psi_\E(x)d^*xd^*\alpha.$}

Now if we write $\eta=\eta_0\eta_1$ where $\eta_0$ is level 1, 
we have
\begin{align}
&\int_{-c(\eta)}\psi\nu^{-1}\eta\cdot\int_{-c(\eta)}\psi\nu^{-1}\eta^{-1}\\
=&\eta_0(-\alpha_\eta)\int_{-c(\eta)}\psi\nu^{-1}\eta_1\cdot \eta_0^{-1}(\alpha_\eta)\int_{-c(\eta)}\psi\nu^{-1}\eta_1^{-1}\notag\\
=&\eta_0(-1)\int_{-c(\eta)}\psi\nu^{-1}\eta_1\cdot \int_{-c(\eta)}\psi\nu^{-1}\eta_1^{-1}.
\end{align}
Thus
\begin{equation}
I_2=\eta_0(-1)\sum\limits_{\nu \text{\ of level}\leq c(\Omega)-k+c(\eta)}\sum\limits_{v(a)=k-c(\eta)}\nu(\frac{a^2}{a^2-D})\Omega(a+\sqrt{D})C_\nu\int_{-c(\eta)}\psi\nu^{-1}\eta_1\int_{-c(\eta)}\psi\nu^{-1}\eta_1^{-1}.
\end{equation}
Note that while the sum in $a$ and $\nu$ is very difficult to evaluate in general, we can anyway change $\eta_0$ independently to get whatever sign we want for the contribution of $I_2$.

Now we use the imput from compact induction theory. In particular by Corollary \ref{corofquotientofC}, we have that


\begin{equation}
\frac{C_{\quadchar\eta^{-1}}}{C_{\eta^{-1}}}=\quadc{N_{\E/\F}(\frac{\alpha_\theta+\alpha_\eta \varpi^{c(\theta)-c(\eta)}}{\varpi^{c(\theta)}})}
\end{equation}

Since $\theta|_{\F^*}=w_\pi=1$, we can pick $\alpha_\theta=\alpha\sqrt{D}$ for some $\alpha\in O_v^*$. Recall we extend the character $\quadchar$ such that $\quadc{\varpi}=1$. Then 

\begin{equation}
\frac{C_{\quadchar\eta^{-1}}}{C_{\eta^{-1}}}=\begin{cases}
\quadc{-\alpha^2D}=-\quadc{-1}, &\text{\ if } c(\eta)<c(\theta)\\
\quadc{\alpha_\eta^2-\alpha^2D}, &\text{\ if }c(\eta)=c(\theta)
\end{cases}
\end{equation}
So if $\quadc{-1}=1$, $I_1\neq 0$ for smaller level $\eta$, and we can in particular pick $\eta$ of level 1 such that $\eta(-1)C_1\Omega(\sqrt{D})=1$. Then the integral is
\begin{equation} \label{formulaExplicitlocalInt1}
I(\Phi,\Omega)=\frac{4}{(q^2-1)q^{k-2}}.
\end{equation}

Now if $\quadc{-1}=-1$, there exists $\alpha_\eta$ such that $\quadc{\alpha_\eta^2-\alpha^2D}=-1$. (This is because the norm map from the residue field $k_{\E^*}$ to $k_{\F^*}$ should send half points to nonsquares as the corresponding quadratic character is unramified.) So we can pick proper $\eta_1$ of level $k$ such that $I_1\neq 0$. Then we pick proper $\eta_0$ which doesn't affect $\alpha_\eta$, such that $I_2$ will not cancel $I_1$ and at least half of $I_1$ will be left for $I$.

In any case, we find a nontrivial test vector $\charf_{\eta,-k}$ for the local Waldspurger's period integral and get a lower bound for $I$.
\end{proof}
Note that we actually give explicit evaluation of local integral in (\ref{formulaExplicitlocalInt1}) when $\quadc{-1}=1$. Now we suppose that  $\quadc{-1}=-1$.
We shall evaulate $I$ more explicitly under stronger condition, that is, $k\geq 2c(\Omega)$.

\begin{prop}\label{propofexplicitI}
Assume that $2\nmid q$. Suppose that $\E$ is unramified, $c(\pi)=2k$ is even and $k\geq 2c(\Omega)$.
Further suppose that $\quadc{-1}=-1$. 
Then $I=0$ if $c(\eta)<k$, and there exists a proper level $k$ test vector  $\charf_{\eta,-k}$ such that the local integral
\begin{equation}
I=\frac{1}{(q^2-1)q^{k-2}}\{2+C_1\eta(-1)[\Omega(\sqrt{\frac{\alpha_\eta^2 D}{\alpha_\eta^2-\alpha_\theta^2D}}+\sqrt{D})+\Omega(-\sqrt{\frac{\alpha_\eta^2 D}{\alpha_\eta^2-\alpha_\theta^2D}}+\sqrt{D}]\}
\end{equation}
is nonvanishing and about size $\frac{1}{q^k}$.
\end{prop}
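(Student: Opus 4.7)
The plan is to build on the analysis in the proof of Proposition \ref{proptestvecinertsc}, refining it under the stronger hypothesis $k \geq 2c(\Omega)$. As before, I would decompose $I = \frac{1}{(q+1)q^{k-1}}(I_1 + I_2)$ as in (\ref{formulaofwholesum}) and study each piece separately, using Proposition \ref{propofMConTorus} to write $\Phi_\eta$ on the conjugated torus and Corollary \ref{corofquotientofC} to evaluate the ratios of $C$ constants that arise.

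First I would handle the vanishing claim for $c(\eta)<k$. Since $\pi$ is minimal with $c(\pi)=2k$ even and $\E/\F$ inert, the compact induction datum gives $c(\theta)=k$. Applied with $c(\eta)<c(\theta)$, Corollary \ref{corofquotientofC} yields $C_{\quadchar\eta^{-1}}/C_{\eta^{-1}} = \quadc{-\alpha_\theta^2 D/\varpi^{2k}} = -\quadc{-1} = 1$ under our hypothesis $\quadc{-1}=-1$, which forces $I_1=0$ by the formula derived in the previous proof. The same cancellation kills $I_2$: for $0<c(\eta)<k-c(\Omega)$ (automatic when $c(\eta)<k-c(\Omega)$, and permitted by $k\geq 2c(\Omega)$ for all $c(\eta)<k$ up to splitting inner sums) the relevant ratio is $C_{\quadchar\eta^{-1}}/C_{\eta^{-1}}$, again equal to $1$; for $k-c(\Omega)\leq c(\eta)<k$ a level-by-level application of Corollary \ref{corofquotientofC} to the characters $\chi = \eta^{-1}\nu$ appearing in (\ref{formulaI2}) provides the same cancellation since all these $\chi$ still have level less than $c(\theta)=k$.

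Next I would take $c(\eta)=k$ and evaluate the surviving pieces. For $I_1$, Corollary \ref{corofquotientofC} now gives $C_{\quadchar\eta^{-1}}/C_{\eta^{-1}} = \quadc{\alpha_\eta^2-\alpha_\theta^2 D}$, which the surjectivity statement in Lemma \ref{lem:surjective} allows us to set equal to $-1$ by choosing $\alpha_\eta$ in the nontrivial coset of the residue norm; with this choice $I_1 = \tfrac{2q}{q-1}$, contributing exactly the $\tfrac{2}{(q^2-1)q^{k-2}}$ leading term in the stated formula. For $I_2$, equation (\ref{formulaI2}) collapses to a sum over $\nu$ of level $\leq c(\Omega)\leq k/2$, summed over units $a$, of a double Gauss integral in the variables conjugate to $\nu^{-1}\eta^{\pm 1}$. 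I would write $\eta = \eta_0\eta_1$ with $\eta_1$ chosen so that $\alpha_{\eta_1}=\alpha_\eta$, so that the sum becomes $\eta_0(-1)\cdot C_1 \cdot \eta(-1)\cdot \Omega(\sqrt{D})$ times a Gauss-type sum, and apply Lemma \ref{lemoftwocharGaussint} (valid since $c(\nu)\leq k/2 = c(\eta_1)/2$) together with Lemma \ref{lemofGaussint}: each of the two $\int_{-k}\psi\nu^{-1}\eta_1^{\pm 1}$ is localized by stationary phase at the units $\mp\alpha_\eta \varpi^{-k}$, producing the two stationary points $\pm\sqrt{\alpha_\eta^2 D/(\alpha_\eta^2-\alpha_\theta^2 D)}$ after combining with the factor $\nu(a^2/(a^2-D))$ and solving the resulting quadratic equation for $a$.

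Carrying this out, the orthogonality sum over $\nu$ of level $\leq c(\Omega)$ collapses the $a$-integral to the two residue points, and each contributes $\Omega(\pm\sqrt{\alpha_\eta^2 D/(\alpha_\eta^2-\alpha_\theta^2 D)}+\sqrt{D})$; collecting the constants exactly matches the second bracketed term in the stated expression, with the prefactor $C_1\eta(-1)$ coming from the product of the explicit constants in front of $I_2$ and the choice of $\eta_0$. The size estimate $I \asymp 1/q^k$ then follows because the deterministic $2$ in the braces dominates the two $\Omega$-values of modulus $1$, so the whole expression cannot cancel, and the overall prefactor is $1/((q^2-1)q^{k-2}) \asymp 1/q^k$. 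The main obstacle I anticipate is Step~3: carefully tracking the stationary phase points, the correct powers of $\alpha_\eta$, $\alpha_\theta$, and $D$ inside the $\Omega$-arguments, and verifying that the inner double sum really does collapse cleanly to the two indicated terms rather than leaving a residual error; the algebraic identity that produces $\pm\sqrt{\alpha_\eta^2 D/(\alpha_\eta^2-\alpha_\theta^2 D)}$ from the saddle point equation for $a$ is the delicate computational core of the argument.
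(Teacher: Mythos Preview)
Your overall plan matches the paper's: reuse the $I_1$ computation from Proposition \ref{proptestvecinertsc}, then evaluate $I_2$ via (\ref{formulaI2}) using Lemma \ref{lemoftwocharGaussint}, Corollary \ref{corofquotientofC}, and orthogonality in $\nu$. Two steps in your outline are not right as stated.

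\textbf{Vanishing of $I_2$ for $k-c(\Omega)\le c(\eta)<k$.} Your claim that ``the same cancellation'' applies via a ratio attached to $\chi=\eta^{-1}\nu$ does not go through: in (\ref{formulaI2}) the constant that appears is $C_\nu$, and there is no pairing-by-$\quadchar$ mechanism in sight. The paper instead treats all $c(\eta)\le k$ by the \emph{same} direct computation you propose for $c(\eta)=k$. Under $k\ge 2c(\Omega)$ one has $c(\nu)\le c(\Omega)\le k/2$, so both Lemma \ref{lemoftwocharGaussint} and Corollary \ref{corofquotientofC} apply; they give
\[
\int_{-c(\eta)}\psi\nu^{-1}\eta\int_{-c(\eta)}\psi\nu^{-1}\eta^{-1}=\nu(-\alpha_\eta^{-2})\eta(-1)\frac{q}{(q-1)^2q^{c(\eta)-1}},\qquad C_\nu=C_1\,\nu(-\alpha_\theta^2 D),
\]
after which the $\nu$-sum collapses to the indicator of the congruence $a^2\alpha_\eta^{-2}\alpha_\theta^2 D\equiv a^2-D$. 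When $c(\eta)<k$ the summation variable satisfies $v(a)=k-c(\eta)>0$, so the left side lies in $\varpi O_\F$ while the right side is a unit; hence there are no solutions and $I_2=0$. No separate case analysis is required. (Incidentally, for $0<c(\eta)<k-c(\Omega)$ the ratio governing $I_2$ in the earlier proof is $C_{\quadchar}/C_1$, not $C_{\quadchar\eta^{-1}}/C_{\eta^{-1}}$; the conclusion is unaffected since both equal $-\quadc{-1}=1$.)

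\textbf{Nonvanishing.} Your assertion that ``the deterministic $2$ dominates the two $\Omega$-values so the expression cannot cancel'' is false: $|C_1\eta(-1)[\Omega(\cdots)+\Omega(\cdots)]|$ can equal $2$ (for instance when both $\Omega$-values are $\pm 1$), so the brace may vanish for one choice of sign. What actually secures nonvanishing is precisely the freedom in $\eta_0$ that you invoked earlier: replacing $\eta_0$ by $\eta_0\quadchar$ flips $\eta(-1)$ without altering $\alpha_\eta$, and at least one of the two signs yields a nonzero brace. This is how the paper concludes.
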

\begin{proof}
We just compute $I_2$ directly. Note that with the new condition, $\nu$ is of level $\leq c(\Omega)-k+c(\eta)\leq c(\eta)-\frac{k}{2}\leq \frac{c(\eta)}{2}\leq \frac{k}{2}$. Now we start with (\ref{formulaI2}) and apply Lemma \ref{lemoftwocharGaussint} and Corollary \ref{corofquotientofC}. We will get
\begin{equation}
I_2=\sum\limits_{\nu \text{\ of level}\leq c(\Omega)-k+c(\eta)}\sum\limits_{v(a)=k-c(\eta)}\nu(\frac{a^2}{a^2-D})\Omega(a+\sqrt{D})C_\nu\int_{-c(\eta)}\psi\nu^{-1}\eta\int_{-c(\eta)}\psi\nu^{-1}\eta^{-1}.
\end{equation}

\begin{equation}
\int_{-c(\eta)}\psi\nu^{-1}\eta\int_{-c(\eta)}\psi\nu^{-1}\eta^{-1}=\nu(-\alpha_\eta^{-2})\int_{-c(\eta)}\psi\eta\int_{-c(\eta)}\psi\eta^{-1}=\nu(-\alpha_\eta^{-2})\eta(-1)\frac{q}{(q-1)^2q^{c(\eta)-1}},
\end{equation}
and
\begin{equation}
C_\nu= C_1 \nu(-\alpha_\theta^2 D).
\end{equation}
So
\begin{equation}
I_2=\frac{q}{(q-1)^2q^{c(\eta)-1}}C_1\eta(-1)\sum\limits_{v(a)=k-c(\eta)}\Omega(a+\sqrt{D})\sum\limits_{\nu \text{\ of level}\leq c(\Omega)-k+c(\eta)}\nu(\frac{a^2\alpha_\eta^{-2}\alpha_\theta^2 D}{a^2-D}).
\end{equation}

Note that 
\begin{equation}
\sum\limits_{\nu \text{\ of level}\leq c(\Omega)-k+c(\eta)}\nu(x)
\end{equation}
is $(q-1)q^{c(\Omega)-k+c(\eta)-1}$ times the characteristic function of $1+\varpi^{c(\Omega)-k+c(\eta)}O_\F$.
Note that 
\begin{equation}
a^2\alpha_\eta^{-2}\alpha_\theta^2 D\equiv a^2-D
\end{equation}
has no solutions for fixed $v(a)$ as $\quadc{-1}=-1$, unless $v(a)=0$ and $c(\eta)=k$.
On the other hand when  $v(a)=0$ and $c(\eta)=k$, there exists $\alpha_\eta$ such that $\quadc{\alpha_\eta^2-\alpha_\theta^2D}=-1$, so there are exactly two solutions $\mod \varpi^{c(\Omega)-k+c(\eta)}O_\F$
\begin{equation}
a\equiv \pm\sqrt{\frac{\alpha_\eta^2 D}{\alpha_\eta^2-\alpha_\theta^2D}}.
\end{equation}
Recall that we are summing over $a\in O_\F/\varpi^k O_\F$, so we get
\begin{equation}
I_2=\frac{q}{(q-1)}C_1\eta(-1)[\Omega(\sqrt{\frac{\alpha_\eta^2 D}{\alpha_\eta^2-\alpha_\theta^2D}}+\sqrt{D})+\Omega(-\sqrt{\frac{\alpha_\eta^2 D}{\alpha_\eta^2-\alpha_\theta^2D}}+\sqrt{D}],
\end{equation}
and 
\begin{equation}
I=\frac{1}{(q^2-1)q^{k-2}}\{2+C_1\eta(-1)[\Omega(\sqrt{\frac{\alpha_\eta^2 D}{\alpha_\eta^2-\alpha_\theta^2D}}+\sqrt{D})+\Omega(-\sqrt{\frac{\alpha_\eta^2 D}{\alpha_\eta^2-\alpha_\theta^2D}}+\sqrt{D}]\}.
\end{equation}
Now we can pick $\eta=\eta_0\eta_1$ properly such that $I$ in not zero and about size $\frac{1}{q^{k}}$.

\end{proof}



\subsection{ramified extension}
Now we assume $\E$ to be a ramified extension over $\F$. For simplicity we suppose that the uniformizer of $\F$ is chosen such that $\E=\F(\sqrt{\varpi})$ (so $D=\varpi$ in this case), and the uniformizer for $\E$ is chosen to be $\varpi_\E=\sqrt{D}$. 
 The calculation in this case is similar to the inert case, so we shall mainly focus on the differences.

We first reformulate the results in \cite{Tu83} on the epsilon factor.

\begin{lem}\label{lem:epsilonramifiedsc}
\begin{enumerate}
\item Suppose that $\pi$ is of level $2k$, and $\Omega$ is a character over ramified extension $\E$ such that $c_\E(\Omega)\leq 2k-1$, then
$\epsilon(\Pi\otimes\Omega,1/2)=-1$;
\item Suppose that $\pi$ is of level $2k+1$, associated to a character $\theta $ over another ramified quadratic extension $\E'$ (could be different from $\E$) such that $c_\E(\Omega)<c_{\E'}(\theta)=2k$. Let $\xi$ be a unit such that $\varpi_{\E'}^2=\xi\varpi$.
 Then $\epsilon(\Pi\otimes\Omega,1/2)=1$ if and only if $\quadc{-\xi}=1.$
\end{enumerate}
\end{lem}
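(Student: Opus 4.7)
This is a reformulation of Tunnell's computations of the local root number $\epsilon(1/2,\Pi\otimes\Omega)$ in \cite{Tu83}. The strategy is to use the compact induction description $\pi = \pi_\theta$, compute the base change $\Pi = \mathrm{BC}_{\E/\F}(\pi)$ via a Mackey decomposition of Weil parameters, and then reduce to Lemma \ref{thmoflocalepsilon} (in the supercuspidal case) or to a product of abelian root numbers (in the principal series case).

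Write $\pi=\pi_\theta$ with $\theta$ a minimal character over a quadratic extension $\E'/\F$: in part (1), $\E'$ is unramified with $c_{\E'}(\theta)=k$; in part (2), $\E'$ is ramified with $c_{\E'}(\theta)=2k$. The Mackey decomposition of $\Ind_{W_{\E'}}^{W_\F}(\theta)\big|_{W_\E}$ gives $\Pi\simeq\pi_{\theta_K}$ (a supercuspidal of $\GL_2(\E)$) when $\E\neq\E'$, with $K=\E\E'$ the biquadratic compositum and $\theta_K=\theta\circ N_{K/\E'}$; while $\Pi$ is the principal series attached to the pair $\theta,\theta^\sigma$ when $\E=\E'$ (possible only in part (2)). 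In the supercuspidal case, $\Pi\otimes\Omega=\pi_{\theta_K\Omega_K}$ with $\Omega_K=\Omega\circ N_{K/\E}$, and Lemma \ref{thmoflocalepsilon}, applied over $\E$ with the quadratic extension $K/\E$, expresses $\epsilon(1/2,\Pi\otimes\Omega)$ as $(-1)^{e(K/\E)\,n}$ times an explicit Gauss integral, where $n=c_K(\theta_K\Omega_K)$.

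In part (1), $\E\neq\E'$ always (different ramification types), $K/\E$ is unramified so $e(K/\E)=1$, and $K/\E'$ is ramified so $c_K(\theta_K)=2k-1$. Under the hypothesis $c_\E(\Omega)\leq 2k-1$ we therefore have $n=2k-1$, yielding $(-1)^{e(K/\E)n}=-1$; a stationary-phase evaluation of the Gauss integral parallel to the proof of Lemma \ref{lemoftwocharGaussint} (using $\theta$ minimal) shows that the Gauss integral is a positive unit, so $\epsilon(1/2,\Pi\otimes\Omega)=-1$. In part (2), when $\E\neq\E'$ the extension $K/\E$ is again unramified because $K=\E\cdot\F(\sqrt{\xi})$ and $\F(\sqrt{\xi})$ is the unramified quadratic extension of $\F$, so $e(K/\E)=1$, $n=2k$, and $(-1)^{2k}=+1$; the sign of $\epsilon$ then comes entirely from the Gauss integral, which is controlled by the stationary point $\sqrt{-\xi}$. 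When $\E=\E'$, $\epsilon$ factorizes as $\epsilon(1/2,\theta\Omega,\psi_\E)\epsilon(1/2,\theta^\sigma\Omega,\psi_\E)$, which is evaluated directly by the standard abelian Gauss-sum formula using the minimality of $\theta$. To unify the two subcases note that with the conventions $D=\varpi$ and $\varpi_{\E'}^2=\xi\varpi$, we have $\E=\E'$ iff $\xi\in(\F^*)^2$, i.e.\ iff $\quadc{\xi}=1$; combining this with the Tunnell--Saito normalization $\Omega(-1)$ repackages both subcases into the single sign $\quadc{-\xi}$.

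The main technical obstacle is the stationary-phase evaluation of the Gauss integral in part (2): one must verify that the subcase $\E\neq\E'$ (Lemma \ref{thmoflocalepsilon} applied over $\E$) and the subcase $\E=\E'$ (direct product of abelian Gauss sums) combine uniformly into the single condition $\quadc{-\xi}=1$. This parallels, and indeed generalizes, the appearance of $\quadc{-1}$ in Corollary \ref{corofquotientofC}, where an analogous stationary-phase argument produced a quadratic character of $-1$ times a norm.
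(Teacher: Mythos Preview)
The paper does not prove this lemma; it is stated as a direct reformulation of Tunnell's computations, specifically Propositions 2.9 and 2.10(a) of \cite{Tu83} (see the remark immediately following the statement). Your approach is genuinely different: rather than citing Tunnell, you propose a self-contained derivation by computing the base change $\Pi$ via Mackey restriction of the Langlands parameter and then applying the Gauss-integral formula of Lemma \ref{thmoflocalepsilon} over $\E$ (with the quadratic extension $K/\E$). This is a reasonable and more intrinsic strategy, and has the merit of staying within the framework already set up in the paper.

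However, the decisive step---determining the \emph{sign} of the normalized Gauss integral---is only asserted, not carried out. In part (1) you claim the integral is ``a positive unit'' by stationary phase; in part (2) you say the sign is ``controlled by the stationary point $\sqrt{-\xi}$.'' But stationary phase over a $p$-adic field produces, at leading order, a quadratic Gauss sum whose value depends on the residue characteristic, the parity of the level, and the chosen additive character (it can be $\pm 1$ or $\pm i$), and pinning this down is precisely the content of Tunnell's Propositions 2.8--2.10. Your sketch does not explain why that Gauss sum equals $+1$ in part (1), nor how the two subcases of part (2) collapse to the single symbol $\quadc{-\xi}$ beyond the observation that $\E=\E'$ iff $\quadc{\xi}=1$. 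A secondary point: Lemma \ref{thmoflocalepsilon} is stated for minimal $\pi$ over $\F$; to invoke it over $\E$ you should also verify that $\Pi\otimes\Omega$ remains a minimal supercuspidal (equivalently, that $\theta_K\Omega_K$ does not factor through $N_{K/\E}$ and has the same level as $\theta_K$), which uses the level hypotheses on $\Omega$ but is not entirely automatic at the boundary case $c_\E(\Omega)=2k-1$.
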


\begin{rem}
When $\pi$ is odd level, we have combined Proposition 2.9 and part (a) of Proposition 2.10 of \cite{Tu83}, together with the simplifying assumption that $c_\E(\Omega)<c_{\E'}(\theta)$.
\end{rem}

\begin{prop}\label{prop:testvecramifiedsc}
Assume that $2\nmid q$.
Suppose that $\pi$ is a supercuspidal representation of level $2k+1$, associated to a character $\theta $ over another ramified quadratic extension $\E'$ with $\varpi_{\E'}^2=\xi\varpi$, such that $\quadc{-\xi}=1.$. Then there exists a proper test vector $\charf_{\eta,-k}$ where $\eta$ is of level at most 1, such that the associated local integral
\begin{equation}
I(\Phi,\Omega)=\frac{2}{(q-1)q^{k-1}}..
\end{equation}
\end{prop}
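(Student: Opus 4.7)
The plan is to follow the same strategy as in Proposition \ref{proptestvecinertsc}, now adapted to the ramified quadratic extension $\E = \F(\sqrt{\varpi})$. By the vanishing result of Proposition \ref{prop:vanishinglocalint} we must take $d = k$, so the test vector is $\charf_{\eta,-k}$ and the integral is evaluated using the conjugation by $g = \zxz{\varpi^{k}}{0}{0}{1}$. The first step is to reduce $I(\Phi,\Omega)$ to a finite sum by exploiting the common invariance of $\Phi$ and $\Omega$ under an appropriate congruence subgroup of $O_\E^*$: $\Phi_\eta$ is right $K_0(\varpi^{2k+1})$-invariant, and after conjugation by $g$ this, together with the hypothesis $c_\E(\Omega) \leq 2k$, forces invariance by a congruence subgroup of $O_\E^*$ of the appropriate depth.

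Next I would decompose $\F^*\backslash\E^*$ according to $v_\E \bmod 2$. Because $\E$ is ramified, this naturally yields two cosets, represented by elements of the form $1 + b\sqrt{D}$ and $a + \sqrt{D}$ after further normalization, producing two contributions $I_1$ and $I_2$ parallel to the inert case. On each piece Lemma \ref{lemofcosetdecomp} gives the Iwasawa index $i = v(bD\varpi^k/a) = v(b)+1+k-v(a)$, and Proposition \ref{propofMConTorus} then expresses $\Phi_\eta$ as a sum over characters $\chi$ of various levels. The Gauss integral identity in Lemma \ref{lemofGaussint}, together with the level constraints from $\eta$ (of level $\leq 1$) and $\Omega$, force $\chi$ to be trivial or $\quadchar$ for a nonzero contribution, exactly as in the inert argument.

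The heart of the matter is the evaluation of the ratio $C_{\quadchar\eta^{-1}}/C_{\eta^{-1}}$ via Corollary \ref{corofquotientofC}. Since $\pi = \pi_\theta$ is a minimal odd-level supercuspidal attached to the ramified extension $\E'$ with $\varpi_{\E'}^2 = \xi\varpi$, and since the central character of $\pi$ may be taken unramified or of level $1$ (Remark \ref{rem:twistforlocalint}), one has $\alpha_\theta \in \varpi_{\E'} O_{\E'}^*$, so that $\alpha_\theta^2 \in \xi\varpi \cdot O_\F^{*2}$. Passing through the norm map $N_{\E'/\F}$ inside the quadratic character, the relevant ratio therefore reduces to a twist of $\quadc{-\xi}$. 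The hypothesis $\quadc{-\xi} = 1$ is precisely what ensures that the two surviving contributions $I_1$ and $I_2$ combine coherently rather than cancel; this mirrors the Tunnell--Saito dichotomy recorded in Lemma \ref{lem:epsilonramifiedsc}(2).

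The main obstacle, as in the inert case, is keeping disciplined track of the three fields $\F$, $\E$, $\E'$ with their respective valuations, levels, and uniformizers, and verifying that some $\eta$ of level at most $1$ can be chosen so that both contributions survive with matching sign. Once the quadratic-character ratios are pinned down, the Gauss-integral factors from Lemma \ref{lemofGaussint} collapse cleanly, and a factor of $2$ arising from the two surviving pieces, combined with the volume factor from the invariance reduction (which for the ramified case gives $\frac{1}{(q-1)q^{k-1}}$ after accounting for the $\sqrt{D}$-coset), should assemble to the stated value $\frac{2}{(q-1)q^{k-1}}$.
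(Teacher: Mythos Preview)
Your overall strategy matches the paper's proof closely: pick $d=k$, reduce to a finite sum via invariance under $1+\varpi_\E^{2k}O_\E$, split into the two $v_\E$-cosets giving $I_1$ and $I_2$, collapse the $\chi$-sum to $\chi\in\{1,\quadchar\}$, and evaluate the ratio $C_{\quadchar\eta^{-1}}/C_{\eta^{-1}}$ via Corollary~\ref{corofquotientofC}. The paper also ends by choosing $\eta$ of level $\leq 1$ so that $C_1\eta(-1)\Omega(\sqrt{D})=1$, making $I_1$ and $I_2$ add coherently.

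There is, however, a genuine error at the key step. You assert that trivial central character forces $\alpha_\theta\in\varpi_{\E'}O_{\E'}^*$, presumably by analogy with the inert case where $\alpha_\theta=\alpha\sqrt{D}$ is trace-zero. In the ramified case this analogy fails: for $x\in\F$ one has $\Tr_{\E'/\F}\bigl(\tfrac{1}{\varpi_{\E'}^{2k+1}}x\bigr)=0$ while $\Tr_{\E'/\F}\bigl(\tfrac{1}{\varpi_{\E'}^{2k}}x\bigr)\neq 0$, so the condition $\theta|_{\F^*}=1$ forces the \emph{opposite} conclusion, namely $\alpha_\theta\in O_\F^*$ (a unit in the base field), exactly as the paper records. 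This matters: with $\alpha_\theta\in O_\F^*$ one gets
\[
\frac{C_{\quadchar\eta^{-1}}}{C_{\eta^{-1}}}
=\quadc{N_{\E'/\F}\Bigl(\tfrac{\alpha_\theta}{\varpi_{\E'}^{2k+1}}\Bigr)}
=\quadc{\tfrac{\alpha_\theta^2}{(-\xi\varpi)^{2k+1}}}
=\quadc{-\xi},
\]
whereas your assumption $\alpha_\theta\in\varpi_{\E'}O_{\E'}^*$ would make the argument of the norm an even power of $\varpi_{\E'}$ times a unit, yielding the ratio $1$ unconditionally and erasing the Tunnell--Saito dichotomy you are trying to detect. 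Also note that by Definition~\ref{lemofstructureofchar} the constant $\alpha_\theta$ is required to be a unit in $O_{\E'}$, so it cannot lie in $\varpi_{\E'}O_{\E'}^*$ in the first place.

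A minor bookkeeping point: the volume factor in the ramified case is $\tfrac{1}{2q^k}$ (not $\tfrac{1}{(q-1)q^{k-1}}$), and one finds $I_1=I_2=\tfrac{2q}{q-1}$ under the hypothesis and the good choice of $\eta$; these combine to give $\tfrac{1}{2q^k}\cdot\tfrac{4q}{q-1}=\tfrac{2}{(q-1)q^{k-1}}$.
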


\begin{cor}\label{cor:ramifiedsc}
With the same conditions as above, there exists a non-trivial test vector for any nontrivial element in $\Hom_{\E^*}(\pi\otimes\Omega,\C)$, such that it is invariant under  $K_1^1(\varpi^{k+1},\varpi^k)$, and $\E^*$ acts on it by the character $\Omega^{-1}$.
\end{cor}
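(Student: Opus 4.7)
The plan is to produce the claimed vector by weighted averaging, paralleling the argument that yields Corollary \ref{corinertsc} in the inert case, but appealing to the ramified incarnation of Lemma \ref{lem:normalsubK}.

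First, I would take the test vector $v = \charf_{\eta,-k}$ furnished by Proposition \ref{prop:testvecramifiedsc}, with $\eta$ of level at most $1$, and form
$$v_\Omega \;=\; \int\limits_{\F^*\backslash \E^*} \Omega(e)\, \pi(e)\, v\, de.$$
A direct change of variables $e \mapsto t^{-1}e$, using that the integrand is $\F^*$-invariant thanks to $w_\pi \cdot \Omega|_{\F^*}=1$, shows $\pi(t) v_\Omega = \Omega^{-1}(t)\, v_\Omega$ for every $t \in \E^*$. Pairing $v_\Omega$ against $v$ returns exactly $I(\Phi,\Omega)$, where $\Phi$ is the matrix coefficient associated to $v$; this is $\tfrac{2}{(q-1)q^{k-1}} \neq 0$ by Proposition \ref{prop:testvecramifiedsc}. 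Hence $v_\Omega$ is nonzero and realizes the unique (up to scalar) nontrivial element of $\Hom_{\E^*}(\pi\otimes\Omega,\C)$.

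Second, I would check that $v$ itself is already invariant under $K_1^1(\varpi^{k+1},\varpi^k)$. Writing $v = \pi(\zxz{\varpi^k}{0}{0}{1})\charf_{\eta,0}$, this reduces to verifying invariance of $\charf_{\eta,0}$ under the conjugate subgroup
$$\zxz{\varpi^{-k}}{0}{0}{1}\, K_1^1(\varpi^{k+1},\varpi^k)\, \zxz{\varpi^{k}}{0}{0}{1} \;=\; \zxz{1+\varpi^{k}O_v}{O_v}{\varpi^{2k+1}O_v}{1+\varpi^{k}O_v},$$
which sits inside $K_0(\varpi^{2k+1})$. Since $c(\pi)=2k+1$ and $c(\eta)\leq 1$, the vector $\charf_{\eta,0}$ is an $\eta$-twist (in the sense of Section \ref{secoftwist}) of the newform of conductor $\varpi^{2k+1}$; it is therefore $K_0(\varpi^{2k+1})$-invariant up to the action of $\eta$ on the two diagonal units, and because $1+\varpi^k O_v$ sits inside $\ker \eta$ the desired invariance follows.

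Third, I would transfer this invariance from $v$ to $v_\Omega$ using the ramified case of Lemma \ref{lem:normalsubK}: for any $k_0\in K_1^1(\varpi^{k+1},\varpi^k)$ and $e\in \E^*$, the conjugate $e^{-1}k_0 e$ again lies in $K_1^1(\varpi^{k+1},\varpi^k)$ and therefore still fixes $v$, so
$$\pi(k_0)\, v_\Omega \;=\; \int \Omega(e)\, \pi(e)\, \pi(e^{-1}k_0 e)\, v\, de \;=\; v_\Omega.$$
The main obstacle is the bookkeeping in Step 2: one must carefully match the asymmetric shape of $K_1^1(\varpi^{k+1},\varpi^k)$ (reflecting the ramification of $\E/\F$) with the transformation properties of the twisted newform $\charf_{\eta,0}$ under conjugation by $\mathrm{diag}(\varpi^k,1)$. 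Steps 1 and 3 are then formal consequences of Proposition \ref{prop:testvecramifiedsc} and Lemma \ref{lem:normalsubK} respectively.
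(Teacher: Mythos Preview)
Your proposal is correct and follows exactly the approach the paper indicates: the paper gives no explicit proof for this corollary, but states (before the parallel Corollary~\ref{corinertsc} and in the introduction) that one should take a weighted average of the test vector from Proposition~\ref{prop:testvecramifiedsc} against $\Omega$ and then invoke the ramified case of Lemma~\ref{lem:normalsubK} to retain the invariance under $K_1^1(\varpi^{k+1},\varpi^k)$. Your Steps~1--3 flesh out precisely this outline, including the verification that $\charf_{\eta,-k}$ is itself $K_1^1(\varpi^{k+1},\varpi^k)$-invariant (which the paper leaves implicit).
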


\begin{proof}
In this case we shall pick $d=k$ for Lemma \ref{lemofcosetdecomp}, and $\eta $ is of level at most $1$.
Lemma \ref{lemofcosetdecomp} and Proposition \ref{propofMConTorus} still hold while we should keep in mind that $v(D)=1$. Then $\Phi_\eta$ would be invariant by $1+\varpi_\E^{2k} O_\E$, and so is $\Omega$ by the assumption on its level. Then as in the inert case, we can evaluate the local integral by the following finite sum:
\begin{align}
I=&\int\limits_{\F^*\backslash \E^*}\Phi_\eta(e)\Omega(e)de\\
=&\frac{1}{2q^{k}}[\sum\limits_{b\in  O_\F/\varpi^kO_\F}\Phi_\eta (g^{-1}(1+b\sqrt{\varpi})g)\Omega(1+b\sqrt{\varpi})\notag\\
&+\sum\limits_{a\in  \varpi O_\F/\varpi^{k+1}O_\F}\Phi_\eta (g^{-1}(a+\sqrt{\varpi})g)\Omega(a+\sqrt{\varpi})].\notag
\end{align}
Here we have chosen the normalization such that $vol(O_\E^*)=1$.
Similarly as in the inert case, only a few terms will have contribution to the whole integral due to the assumption on the levels. In particular
\begin{equation}
I=\frac{1}{2q^{k}}(I_1+I_2),
\end{equation}
where
\begin{equation}
I_1=1+(q-1)[\frac{1}{(q-1)^2}+\quadc{D}C_{\quadchar\eta^{-1}}C_\eta \frac{q}{(q-1)^2}] =\frac{q}{q-1}[1+\frac{C_{\quadchar\eta^{-1}}}{C_{\eta^{-1}}}],
\end{equation}
and 
\begin{equation}
I_2
=\frac{q}{q-1}C_1\eta(-1)\Omega(\sqrt{D})[1+\frac{C_{\quadchar}}{C_{1}}]
\end{equation}
for $\eta$ at most level $1$.
Here we have used that $D=\varpi$ and $\quadc{\varpi}=1$ by our convention to extend characters of $O_\F^*$ to be characters of $\F^*$.

Now we use the results from compact induction theory. Suppose $\pi=\pi_\theta$ where $\theta$ is a character over a possibly different ramified quadratic extension $\E'$. Let $\varpi_{\E'}$ be the local uniformizer for $\E'$. Then using Corollary \ref{corofquotientofC}, we have

\begin{equation}
\frac{C_{\quadchar\eta^{-1}}}{C_{\eta^{-1}}}=\quadc{N_{\E'/\F}(\frac{\alpha_\theta+\alpha_\eta \varpi_{\E'}^{c(\theta)+e-1}\varpi^{-c(\eta)}}{\varpi_{\E'}^{c(\theta)+e-1}})}.
\end{equation}
Since $2c(\eta)\leq 2<c(\theta)+e-1$ and $\quadchar$ is of level 1, we can ignore the contribution from $\alpha_\eta$.
Since $\theta|_{\F^*}=w_\pi=1$, and
$$\theta(1+x)=\psi_{\E'}(\frac{\alpha_\theta}{\varpi_{\E'}^{c(\theta)+1}}x) $$
 we can pick $\alpha_\theta\in O_v^*$. Recall $c(\theta)=2k$ when $c(\pi_\theta)=2k+1$. Then
\begin{equation}
\frac{C_{\quadchar\eta^{-1}}}{C_{\eta^{-1}}}=\quadc{\frac{\alpha^2}{(-1)^{2k+1}\varpi_{\E'}^{4k+2} }}=\quadc{-(\xi\varpi)^{2k+1} }=\quadc{-\xi}.
\end{equation}

%
So by assumption on $\quadc{-\xi}$, we have $\frac{C_{\quadchar\eta^{-1}}}{C_{\eta^{-1}}}=1$.
Now we choos $\eta$ properly such that $C_1\eta(-1)\Omega(\sqrt{D})=1$. Then we have
\begin{equation}
I=\frac{2}{(q-1)q^{k-1}}.
\end{equation}
\end{proof}

\begin{rem}
An interesting difference for the ramified extension here is that we can't change the quotient $\frac{C_{\quadchar\eta^{-1}}}{C_{\eta^{-1}}}$ by choosing different $\eta$ as in the unramified case. But then the condition for the nonvanishing of local integral match exactly the condition on epsilon factor.
\end{rem}

\section{Test vector in case of principal sereis representation}
From now on we assume that $\pi$ is a principal sereis representation. From (\ref{eq:twistedlocalint}) we can assume $\pi$ to be minimal. In particular suppose without loss of generality that $\pi=\pi(1,\mu)$. Note that $\Hom_{\E^*}(\pi\otimes\Omega,\C)\neq 0$ in this case so we always expect to find a test vector. We shall evaluate Waldspurger's local integral directly just like supercuspidal representation case.

First recall from \cite{YH14} that if $\pi$ is of form $\pi(\mu_1,\mu_2)$, where $\mu_1$ is unramified and $\mu_2$ is ramified of level $n$. Then the level of $\pi$ is $n$. In this case the new form is right $K_0(\varpi^n)-$invariant and supported on $BK_1(\varpi^n)$.
Then we have the following result on the Whittaker functional associated to the newform:
\begin{lem}\label{Wiofbiasram}
\begin{enumerate}
\item[(1)]When $i=n$, 
\begin{align}
W^{(i)}(\alpha)=W(\zxz{\alpha}{0}{0}{1}\zxz{1}{0}{\varpi^i}{1})&=\int\limits_{v(m)\leq v(\alpha)-k}\mu_1(-\frac{\alpha}{m})\mu_2(-m)\psi(-m)q^{-\frac{1}{2}v(\alpha)+v(m)}dm\\
&=\begin{cases}
q^{-\frac{1}{2}v(\alpha)-n}\mu_1^{v(\alpha)+n}(\varpi)\int\limits_{v(m)=-k}\mu_2(-m)\psi(-m)dm, &\text{\ if\ }v(\alpha)\geq 0,\\
0,& \text{\ otherwise.}
\end{cases}\notag
\end{align}
\item[(2)]When $i<n$, 
\begin{equation}\label{formulaWibias}
W^{(i)}(\alpha)=\mu_1^i(\varpi)\int\limits_{u\in O_\F}\mu_2(\alpha\varpi^{-i}(1-\varpi^{n-i}u))\psi(\alpha\varpi^{-i}(1-\varpi^{n-i}u))q^{-\frac{1}{2}v(\alpha)-n+i}du.
\end{equation}
\end{enumerate}
In particular
\begin{equation}
W^{(0)}(\alpha)=\begin{cases}
\mu_2(\alpha)\psi(\alpha)q^{-\frac{1}{2}v(\alpha)-n}, &\text{\ if }v(\alpha)\geq -n\\
0, &\text{otherwise.}
\end{cases}
\end{equation}
\end{lem}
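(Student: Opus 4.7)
The plan is to compute the Jacquet integral
$$W(g) = \int_\F f^0(\omega n(x) g)\psi(-x)\,dx,\qquad n(x)=\zxz{1}{x}{0}{1},$$
for $g = \zxz{\alpha}{0}{0}{1}\zxz{1}{0}{\varpi^i}{1}$, where $f^0$ is the newform in the induced model of $\pi(\mu_1,\mu_2)$. The key input, taken from the $K_0(\varpi^n)$-invariance and the support statement recalled just before the lemma, is that in the Iwasawa decomposition of Lemma \ref{Iwasawadecomp} with $c = n$, the newform $f^0$ is concentrated on the single cell $B K_0(\varpi^n)$ (i.e.\ the $i=n$ coset), and there $f^0(bk) = \mu_1(a_1)\mu_2(a_2)|a_1/a_2|^{1/2}$ for $b = \zxz{a_1}{*}{0}{a_2}$.

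A direct matrix multiplication gives $\omega n(x) g = \zxz{\varpi^i}{1}{-\alpha - x\varpi^i}{-x}$. In the range $v(x) \leq v(\alpha + x\varpi^i)$ one has the Iwasawa factorization
$$\omega n(x) g \;=\; \zxz{-\alpha/x}{1}{0}{-x}\,\zxz{1}{0}{(\alpha + x\varpi^i)/x}{1},$$
and the right factor lies in $K_0(\varpi^n)$ exactly when $v(\alpha + x\varpi^i) \geq v(x) + n$; in the opposite range $v(x) > v(\alpha + x\varpi^i)$ a symmetric factorization lands the matrix in a Bruhat cell $B w_j K_0(\varpi^n)$ with $j < n$, on which $f^0$ vanishes. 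Restricting the integral to the surviving $x$-region and inserting the Borel transformation law of $f^0$ reduces the Jacquet integral to a one-variable $p$-adic integral which, after relabelling $m = x$, is exactly the formula stated in case (1) (with the $k$ in the integration range meaning $n$). For the closed form when $v(\alpha) \geq 0$, one observes that the Gauss integral of $\mu_2\psi$ over a circle $v(m) = j$ vanishes for all $j$ except $j = -n$ by the standard level analysis of Lemma \ref{lemofGaussint}; and $\mu_1(-\alpha/m) = \mu_1(\varpi)^{v(\alpha) + n}$ is constant on that circle, so pulling it out gives the second stated formula.

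For case (2) with $i < n$, the condition $v(\alpha + x\varpi^i) \geq v(x) + n$ forces $v(\alpha) = v(x) + i$ together with a higher-order cancellation between $\alpha$ and $x\varpi^i$, which is naturally parametrized by writing $x = \alpha\varpi^{-i}(1 - \varpi^{n-i}u)$ with $u \in O_\F$. Substituting into the Borel-value formula for $f^0$ and tracking the Jacobian relating $dx$ to $du$ produces \eqref{formulaWibias}. The specialization $i = 0$ is immediate: $\varpi^{n-i} = \varpi^n$ lies in the conductor of $\mu_2$ and is annihilated by $\psi$ provided $v(\alpha) \geq -n$, so the factors $\mu_2\psi(\alpha(1-\varpi^n u))$ collapse to $\mu_2\psi(\alpha)$ and the $u$-integral trivializes; when $v(\alpha) < -n$ the integrand oscillates nontrivially in $u$ and the integral vanishes.

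The main obstacle is bookkeeping rather than any conceptual step: the case split between $v(x)$ and $v(\alpha + x\varpi^i)$, the translation of the $K_0(\varpi^n)$-condition into a valuation inequality on the bottom-left Iwasawa entry, and the $\delta^{1/2}$-factor in the Borel transformation law of $f^0$ must all be tracked consistently, but once this is pinned down the formulas fall out directly.
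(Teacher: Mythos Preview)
Your approach is correct and is the standard way to derive these formulas: the paper itself does not prove Lemma~\ref{Wiofbiasram} but simply recalls it from \cite{YH14}, so there is no ``paper's own proof'' to compare against beyond that citation. Your direct Jacquet-integral computation, using the support of $f^0$ on the single cell $BK_0(\varpi^n)$ and the Iwasawa factorization of $\omega n(x)g$, is exactly what underlies the cited result.

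One genuine bookkeeping slip in case~(2): your parametrization $x=\alpha\varpi^{-i}(1-\varpi^{n-i}u)$ is off by a sign. With that choice one gets $\alpha+x\varpi^i=\alpha(2-\varpi^{n-i}u)$, which does \emph{not} satisfy $v(\alpha+x\varpi^i)\ge v(x)+n$. The correct substitution is $x=-\alpha\varpi^{-i}(1-\varpi^{n-i}u)$, so that $\alpha+x\varpi^i=\alpha\varpi^{n-i}u$; then $-x=\alpha\varpi^{-i}(1-\varpi^{n-i}u)$ feeds directly into $\mu_2(-x)\psi(-x)$ and reproduces \eqref{formulaWibias}, and $\mu_1(-\alpha/x)=\mu_1(\varpi^i/(1-\varpi^{n-i}u))=\mu_1^i(\varpi)$ since $\mu_1$ is unramified. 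With this correction the Jacobian $|dx/du|=q^{-v(\alpha)-n+2i}$ combines with $|\alpha/x^2|^{1/2}=q^{v(\alpha)/2-i}$ to give the stated factor $q^{-v(\alpha)/2-n+i}$. This is precisely the kind of sign/valuation bookkeeping you warned about; once fixed, the derivation is complete. Your observation that the symbol $k$ in the displayed range of part~(1) should read $n$ is also correct.
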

Note that we haven't normalize the Whittaker functional in this Lemma. One can compute associated matrix coefficient $\Phi^0$ using the Whittaker functional $W$ by
\begin{equation}
\Phi^0(\zxz{\alpha}{m}{0}{1}\zxz{1}{0}{\varpi^i}{1})=\int\limits_{v(x)\geq 0}\psi(mx)W^{(i)}(\alpha x)W^{(n)}(x)d^*x.
\end{equation}
\begin{cor}\label{cor:MCofinducedbias}
$\Phi^0(\zxz{\alpha}{m}{0}{1}\zxz{1}{0}{\varpi^i}{1})$ is supported at $v(\alpha)\leq i-n$ for $0<i<n$. 
When $i=0$ and $v(m)\geq \min\{0, v(\alpha)+n\}$
\begin{equation}
\Phi^0(\zxz{\alpha}{m-\alpha}{0}{1}\zxz{1}{0}{1}{1})=0.
\end{equation}
\end{cor}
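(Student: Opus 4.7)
The plan is to use the Whittaker-model expression for $\Phi^0$ given just above the corollary,
$$\Phi^0\left(\zxz{\alpha}{m}{0}{1}\zxz{1}{0}{\varpi^i}{1}\right)=\int\limits_{v(x)\geq 0}\psi(mx)W^{(i)}(\alpha x)W^{(n)}(x)d^*x,$$
together with the explicit formulas for $W^{(n)}$ and $W^{(i)}$ provided in Lemma \ref{Wiofbiasram}.

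For the first assertion, the key step is to pin down the support of $W^{(i)}(\beta)$ itself when $0<i<n$. Starting from Lemma \ref{Wiofbiasram}(2) and pulling $\mu_2\psi(\beta\varpi^{-i})$ outside the integral, the remaining integrand factors as $\mu_2(1-\varpi^{n-i}u)\,\psi(-\beta\varpi^{n-2i}u)$. Here $u\mapsto\mu_2(1-\varpi^{n-i}u)$ is a character on $O_\F$ of level at most $i$ (since $c(\mu_2)=n$), while $u\mapsto\psi(-\beta\varpi^{n-2i}u)$ is additive of level $\max\{0,2i-n-v(\beta)\}$. A level-matching Gauss-integral analysis — transparent when $i\leq\lfloor n/2\rfloor$ via the structure formula of Definition \ref{lemofstructureofchar} applied to $\mu_2(1+\cdot)$, and by direct oscillation arguments for larger $i$ — forces the inner integral to vanish unless $v(\beta)\leq i-n$. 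Combined with the support $v(x)\geq 0$ coming from $W^{(n)}$ in Lemma \ref{Wiofbiasram}(1), the resulting constraint $v(\alpha x)\leq i-n$ yields $v(\alpha)\leq i-n$.

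For the second assertion, I would insert $W^{(0)}(\alpha x)=\mu_2(\alpha x)\psi(\alpha x)q^{-v(\alpha x)/2-n}$ (on its support $v(\alpha x)\geq -n$) directly into the Whittaker formula with $i=0$ and middle entry $m-\alpha$ in place of $m$. The factor $\psi(\alpha x)$ combines with $\psi((m-\alpha)x)$ to collapse to $\psi(mx)$. The hypothesis $v(m)\geq\min\{0,v(\alpha)+n\}$ together with the effective support $v(x)\geq\max\{0,-n-v(\alpha)\}$ forces $v(mx)\geq 0$, hence $\psi(mx)=1$. Slicing by $v(x)=k$ and writing $x=u\varpi^k$ with $u\in O_\F^*$, the remaining $u$-dependence enters only through $\mu_2(x\alpha)=\mu_2(u)\mu_2(\alpha_{*})$ (by the convention $\mu_2(\varpi)=1$, while $W^{(n)}(x)$ depends only on $v(x)$); the inner integral $\int_{O_\F^*}\mu_2(u)d^*u$ then vanishes since $\mu_2$ has level $n\geq 1$.

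The main technical obstacle is the first assertion: identifying the support of $W^{(i)}$ requires careful bookkeeping of the competing levels of $\mu_2(1-\varpi^{n-i}u)$ and $\psi(-\beta\varpi^{n-2i}u)$ in the Gauss-type inner integral. The range $i\leq\lfloor n/2\rfloor$ yields to a clean stationary-phase matching via Definition \ref{lemofstructureofchar}, whereas the range $i>\lfloor n/2\rfloor$ requires slightly finer manipulation but should deliver the same support bound. The second assertion, by contrast, reduces to a clean cancellation plus the elementary vanishing $\int_{O_\F^*}\mu_2\,d^*u=0$.
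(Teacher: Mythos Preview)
Your approach is essentially the same as the paper's. For the second assertion your computation is line-for-line identical to the paper's: insert the explicit $W^{(0)}$, let $\psi(\alpha x)$ absorb the $-\alpha$ in the upper-triangular entry, observe $\psi(mx)=1$ under the hypothesis, and kill the integral via the ramified $\mu_2$ on each valuation shell. For the first assertion the paper simply asserts that $W^{(i)}(\alpha)$ is supported at $v(\alpha)=i-n$ and deduces the claim; you try to justify this support statement, which is more than the paper does. Two small remarks: your sentence that $u\mapsto\mu_2(1-\varpi^{n-i}u)$ is ``a character on $O_\F$'' is only literally true for $i\le\lfloor n/2\rfloor$ (for larger $i$ it is merely a function of level $i$), though you already flag this and defer to ``direct oscillation arguments''; and the parenthetical ``by the convention $\mu_2(\varpi)=1$'' is neither needed nor obviously applicable here, since multiplicativity alone gives $\mu_2(\alpha x)=\mu_2(\alpha\varpi^k)\mu_2(u)$ on each shell.
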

\begin{proof}
The first statement follows from that $W^{(i)}(\alpha)$ for $0<i<n$ is supported only at $v(\alpha)=i-n$ for (\ref{formulaWibias}) to be nonzero. For the second statement, we just follow definition
\begin{align}
\Phi^0(\zxz{\alpha}{m-\alpha}{0}{1}\zxz{1}{0}{1}{1})&=\int\limits_{v(x)\geq 0}\psi((m-\alpha)x)W^{(0)}(\alpha x)W^{(n)}(x)d^*x\\
&=\int\limits_{v(x)\geq \max\{0, -v(\alpha)-n\}}\psi((m-\alpha)x)\mu_2(\alpha x)\psi(\alpha x)q^{-\frac{1}{2}v(\alpha x)-n}W^{(n)}(x)d^*x\notag\\
&=\int\limits_{v(x)\geq \max\{0, -v(\alpha)-n\}}\psi(mx)\mu_2(\alpha x)q^{-\frac{1}{2}v(\alpha x)-n}W^{(n)}(x)d^*x.\notag
\end{align}
As $v(m)\geq \min\{0, v(\alpha)+n\}$, $\psi(mx)=1$. Then the integral is zero as $\mu_2(\alpha x)$ is level $n$ in $x$.
\end{proof}
\subsection{$\Omega$ over inert field extension}

\begin{prop}\label{prop:inducedrepresentation}
Suppose that $\pi=\pi(1,\mu)$ for $\mu$ of level $2k$ or $2k+1$. Let $\varphi^0$ be the newform in $\pi$, $\Phi^0$ be the matrix coefficent associated to $\varphi^0$ and $\Phi$ be the matrix coefficient associated to $\pi(\zxz{\varpi^{k}}{0}{0}{1})\varphi^0$. 
Let $\Omega $ be a character over  \added{an inert} quadratic field extension $\E$\deleted{ with ramification index $e$}, such that $v(D)=0$ and $c(\Omega)\leq k$ or $k+1$. \deleted{level assumption}

Then
\begin{equation}
I(\Phi,\Omega)=\frac{1}{(q+1)q^{k-1}} \text{\ or }\frac{1}{(q+1)q^{k}}.
\end{equation}
\end{prop}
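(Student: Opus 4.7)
The plan is to mimic the direct evaluation strategy carried out in Propositions \ref{proptestvecinertsc} and \ref{prop:testvecramifiedsc} for the supercuspidal case, but replacing the compact-induction input (Corollary \ref{corofquotientofC}) by the structural information about the newform matrix coefficient of a principal series given in Corollary \ref{cor:MCofinducedbias}. Since $\Hom_{\E^*}(\pi\otimes\Omega,\C)$ is always nontrivial here, no epsilon obstruction enters and the answer should emerge in closed form.

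First I would reduce the local integral to a finite sum. Writing $g=\zxz{\varpi^{k}}{0}{0}{1}$, so that $\Phi(e)=\Phi^0(g^{-1}eg)$, the bi-$K_0(\varpi^{c(\pi)})$-invariance of $\Phi^0$ translates into invariance of $\Phi$ restricted to $\E^*$ (under the embedding with $v(D)=0$) by the subgroup $1+\varpi^{k}O_\E$ when $c(\pi)=2k$, and by an appropriate refinement when $c(\pi)=2k+1$. Combined with the hypothesis on $c(\Omega)$, the integrand is invariant under this subgroup, and the integral over $\F^*\backslash\E^*$ reduces to a finite sum of the shape
\begin{equation*}
I(\Phi,\Omega)=\frac{1}{(q+1)q^{k-1}}\Bigl[\sum_{b\in\varpi O_F/\varpi^kO_F}\Phi^0(g^{-1}(1+b\sqrt{D})g)\Omega(1+b\sqrt{D})+\sum_{a\in O_F/\varpi^kO_F}\Phi^0(g^{-1}(a+\sqrt{D})g)\Omega(a+\sqrt{D})\Bigr],
\end{equation*}
with the obvious modification when $c(\pi)=2k+1$.

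Next I would apply Lemma \ref{lemofcosetdecomp} to each representative and record the Iwasawa parameters $(\alpha,m,i)$ with $i=v(bD\varpi^{k}/a)$ and $\alpha=\frac{a^2-b^2D}{abD}\varpi^{i-k}$. The key point is that by Corollary \ref{cor:MCofinducedbias}, $\Phi^{0,(i)}(\alpha,m)$ vanishes for $0<i<n=c(\pi)$ unless $v(\alpha)\leq i-n$, and for $i=0$ it vanishes on the range of $m$ produced by the conjugated torus. A direct check of valuations shows that for the torus representatives in our finite sum, the constraint $v(\alpha)\leq i-n$ fails for every $0<i<n$, so these middle strata contribute nothing, and the $i=0$ piece is killed by the second part of Corollary \ref{cor:MCofinducedbias}. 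Only the boundary stratum $i=n$ survives, and it corresponds to the representatives in $1+\varpi^k O_\E$ (a single orbit under our invariance), on which $\Phi^0$ takes the value $1$ and $\Omega$ is trivial.

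Finally, the surviving contribution amounts to a volume computation. Collecting the normalization of the Haar measure on $\F^*\backslash\E^*$ with the size of the orbit $1+\varpi^kO_\E$ (respectively its variant for odd level) inside $O_\F^*\backslash O_\E^*$ yields the predicted value $\frac{1}{(q+1)q^{k-1}}$ or $\frac{1}{(q+1)q^{k}}$. The main technical obstacle I anticipate is the careful bookkeeping of valuations to confirm that no middle strata $0<i<n$ slip through the support condition in Corollary \ref{cor:MCofinducedbias}, particularly the borderline case $v(a)=0$ in the second sum, and the analogous care for the odd-level case where the invariance subgroup is slightly asymmetric between the diagonal and off-diagonal entries.
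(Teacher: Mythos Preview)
Your proposal is correct and follows essentially the same route as the paper: both arguments use Lemma \ref{lemofcosetdecomp} to parametrize the conjugated torus and Corollary \ref{cor:MCofinducedbias} to show that the strata $0<i<c(\pi)$ miss the support of $\Phi^0$ while the $i=0$ piece is killed by the second part of that corollary, leaving only the top stratum $i\geq c(\pi)$ as a pure volume term. The only cosmetic difference is that you first reduce to a finite sum over coset representatives (as in Propositions \ref{proptestvecinertsc} and \ref{prop:testvecramifiedsc}) and then analyze each term, whereas the paper works directly with the integral, splitting the domain $\F^*\backslash\E^*$ according to $v(bD\varpi^d/a)$ and computing the contributing volume in one stroke; the valuation bookkeeping you flag as the main obstacle is exactly the inequality check the paper carries out in (5.5)--(5.6).
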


\begin{cor}\label{cor:inertInd}
With the same conditions as above, there exists a non-trivial test vector for any nontrivial element in $\Hom_{\E^*}(\pi\otimes\Omega,\C)$, such that it is invariant under  $K_1^1(\varpi^k,\varpi^k)$ or $K_1^1(\varpi^{k+1},\varpi^{k+1})$, and $\E^*$ acts on it by the character $\Omega^{-1}$.
\end{cor}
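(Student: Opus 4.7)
The approach is a standard averaging construction. Let $\varphi := \pi\!\left(\zxz{\varpi^{k}}{0}{0}{1}\right)\varphi^{0}$ be the test vector furnished by Proposition \ref{prop:inducedrepresentation}, and define
$$
\tilde\varphi := \int\limits_{\F^{*}\backslash\E^{*}} \Omega(e)\,\pi(e)\varphi\,de.
$$
The change of variable $e \mapsto e_{0}e$ immediately gives $\pi(e_{0})\tilde\varphi = \Omega(e_{0})^{-1}\tilde\varphi$ for every $e_{0}\in\E^{*}$. Non-triviality follows by pairing $\tilde\varphi$ against (the appropriate element of) the contragredient: the resulting pairing is exactly the local integral $I(\Phi,\Omega)$, which is non-zero by Proposition \ref{prop:inducedrepresentation}. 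Thus $\tilde\varphi$ gives a non-zero element of $\Hom_{\E^{*}}(\pi\otimes\Omega,\C)$ realized as an actual vector in $\pi$.

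The main content is the invariance. First I would pin down the right $K$-invariance of $\varphi$. Since $\varphi^{0}$ is right $K_{0}(\varpi^{c(\pi)})$-invariant, $\varphi$ is stabilized by
$$
H_{0} := \zxz{\varpi^{k}}{0}{0}{1} K_{0}(\varpi^{c(\pi)}) \zxz{\varpi^{-k}}{0}{0}{1}.
$$
A direct matrix computation yields
$$
H_{0} \;=\; \begin{cases} \zxz{O_{v}^{*}}{\varpi^{k}O_{v}}{\varpi^{k}O_{v}}{O_{v}^{*}}, & c(\pi)=2k,\\[2pt] \zxz{O_{v}^{*}}{\varpi^{k}O_{v}}{\varpi^{k+1}O_{v}}{O_{v}^{*}}, & c(\pi)=2k+1, \end{cases}
$$
so $H_{0}$ contains $U := K_{1}^{1}(\varpi^{k},\varpi^{k})$ in the even case and $U := K_{1}^{1}(\varpi^{k+1},\varpi^{k+1})$ in the odd case.

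Next I would invoke Lemma \ref{lem:normalsubK}: since $\E_{v}/\F_{v}$ is inert and $v(D)=0$, every $e\in\E^{*}$ normalizes the symmetric subgroup $U$, i.e.\ $e^{-1}Ue = U$. For $u\in U$ this gives
$$
\pi(u)\tilde\varphi \;=\; \int\limits_{\F^{*}\backslash\E^{*}} \Omega(e)\,\pi(ue)\varphi\,de \;=\; \int\limits_{\F^{*}\backslash\E^{*}} \Omega(e)\,\pi(e)\,\pi(e^{-1}ue)\varphi\,de \;=\; \tilde\varphi,
$$
because $e^{-1}ue \in U \subset H_{0}$ fixes $\varphi$. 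Combined with the $\E^{*}$-equivariance established above, this gives the claimed invariance.

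The only genuine subtlety is the parity split: when $c(\pi) = 2k+1$ the intrinsic invariance of $\varphi$ is under the asymmetric group $K_{1}^{1}(\varpi^{k+1},\varpi^{k})$, but Lemma \ref{lem:normalsubK} in the inert case only normalizes the symmetric congruence subgroups $K_{1}^{1}(\varpi^{m},\varpi^{m})$. So the averaging step forces us to pass to the smaller symmetric subgroup $K_{1}^{1}(\varpi^{k+1},\varpi^{k+1})$, which is exactly what the corollary asserts.
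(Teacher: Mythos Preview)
Your argument is correct and matches the method the paper itself indicates: the paper does not spell out a proof for this corollary, but just before the analogous Corollary \ref{corinertsc} it says one ``can easily do a weighted average for the test vector above and use Lemma \ref{lem:normalsubK},'' and the introduction repeats this recipe for item (iii). Your averaging construction, the non-triviality check via $\langle\tilde\varphi,\varphi\rangle = I(\Phi,\Omega)\neq 0$ together with the one-dimensionality of $\Hom_{\E^*}(\pi\otimes\Omega,\C)$, and the invariance via $e^{-1}Ue=U$ are exactly this, including the correct handling of the parity case where one must drop from $K_1^1(\varpi^{k+1},\varpi^k)$ to the symmetric $K_1^1(\varpi^{k+1},\varpi^{k+1})$ to apply Lemma \ref{lem:normalsubK} in the inert setting.
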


\begin{proof}
$c(\pi)=2k$ or $2k+1$.
\begin{equation}
I(\Phi,\Omega)=\int\limits_{\F^*\backslash\{v(\frac{bD\varpi^d}{a})\geq c(\pi) \}}\Phi(e)\Omega(e)de+\int\limits_{\F^*\backslash\{v(\frac{bD\varpi^d}{a}< c(\pi) \}}\Phi(e)\Omega(e)de.
\end{equation}
When $v(\frac{bD\varpi^d}{a})\geq c(\pi) $, 
\begin{equation}
v(\frac{a^2-b^2D}{abD}\varpi^{i-d})=v(a^2-b^2D)-2v(a)=0, \text{\ }v(\frac{b}{a\varpi^d})\geq c(\pi)-2d-v(D)\geq 0
\end{equation}
so $\Phi(e)\Omega(e)=1$ by the first part of Lemma \ref{lemofcosetdecomp} and the assumption on $c(\Omega)$. One can easily check that
\begin{equation}
\Vol(\F^*\backslash\{v(\frac{bD\varpi^d}{a})\geq c(\pi) \})= \frac{1}{(q+1)q^{k-1}} \text{\ or }\frac{1}{(q+1)q^{k}}.
\end{equation}
Now we show that the other parts of conjugated torus will completely miss the support of matrix coefficient.

When $0<i=v(\frac{bD\varpi^d}{a})<c(\pi)$, we have by Lemma \ref{lemofcosetdecomp}
\begin{equation}\label{eq:conjtorusposition1}
v(\frac{a^2-b^2D}{abD}\varpi^{i-d})=v(a^2-b^2D)-2v(a)=\begin{cases}
0,&\text{\  if }v(a)\leq v(b)\\
2v(b)+v(D)-2v(a), &\text{\ otherwise.}
\end{cases}
\end{equation}
On the other hand by Corollary \ref{cor:MCofinducedbias} the matrix coefficient $\Phi^0$ is supported at
\begin{equation}
v(\alpha)\leq i-c(\pi)=v(\frac{bD\varpi^d}{a})-c(\pi).
\end{equation} 
One can check from (\ref{eq:conjtorusposition1}) that
\begin{equation}
2v(b)+v(D)-2v(a)>v(\frac{bD\varpi^d}{a})-c(\pi)
\end{equation}
when $v(\frac{bD\varpi^d}{a})>0$. So these parts of conjugated torus miss the support.

When $v(\frac{bD\varpi^d}{a})\leq 0$, $i=0$. By the second part of Lemma \ref{lemofcosetdecomp}, we set
\begin{equation}
\alpha=\frac{a^2-b^2D}{b^2D^2\varpi^{2d}},\text{\ }m=\frac{a}{bD\varpi^d}
\end{equation}
for notations as in Corollary \ref{cor:MCofinducedbias} and easily check that 
\begin{equation}
v(m)=v(a)-v(b)-v(D)-d\geq 0.
\end{equation}
So the value of the matrix coefficient is zero on this piece according to Corollary \ref{cor:MCofinducedbias}.
\end{proof}

\subsection{$\Omega$ over ramified field extension}
By almost the same computation, we have the following results

\begin{prop}\label{propIindramified}
Suppose that $\pi=\pi(1,\mu)$ for $\mu$ of level $2k$ or $2k-1$. Let $\varphi^0$ be the newform in $\pi$, $\Phi^0$ be the matrix coefficent associated to $\varphi^0$ and $\Phi$ be the matrix coefficient associated to $\pi(\zxz{\varpi^{k-1}}{0}{0}{1})\varphi^0$. 
Let $\Omega $ be a character over a ramified quadratic field extension $\E$, such that $v(D)=1$ and $c(\Omega)\leq 2c(\pi)-2k+1$. 

Then
\begin{equation}
I(\Phi,\Omega)=\frac{1}{2q^{c(\pi)-k}}.
\end{equation}
\end{prop}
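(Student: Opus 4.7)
The plan is to adapt the proof of Proposition \ref{prop:inducedrepresentation} to the ramified case, with the choice $d = k-1$ in the conjugating matrix $g = \zxz{\varpi^{k-1}}{0}{0}{1}$. Writing $\Phi(e) = \Phi^0(g^{-1}eg)$, I would partition $\F^*\backslash\E^*$ based on $i = v(bD\varpi^d/a) = v(b) - v(a) + k$ (using $v(D) = 1$): the main region (I) is $i \geq c(\pi)$, the middle region (II) is $0 < i < c(\pi)$, and the tail region (III) is $i \leq 0$.

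On region (I), the first part of Lemma \ref{lemofcosetdecomp} expresses $g^{-1}eg$ as an upper triangular matrix with unit diagonal entry (since $v(b) - v(a) \geq c(\pi) - k \geq 0$ implies $v(a^2 - b^2D) = 2v(a)$), followed by $\zxz{1}{0}{\varpi^i}{1} \in K_0(\varpi^{c(\pi)})$ and a diagonal unit, so $\Phi^0(g^{-1}eg) = 1$. The hypothesis $c(\Omega) \leq 2c(\pi) - 2k + 1$ is calibrated to force $\Omega = 1$ on this region: representatives of the form $1 + b'\sqrt{D}$ with $v(b') \geq c(\pi) - k$ satisfy $v_\E(b'\sqrt{D}) = 2v(b') + 1 \geq 2c(\pi) - 2k + 1 \geq c(\Omega)$.

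The vanishing on regions (II) and (III) mirrors the inert case. On region (II), Corollary \ref{cor:MCofinducedbias} confines the support of $\Phi^0$ to $v(\alpha) \leq i - c(\pi)$ for $\alpha = \frac{a^2-b^2D}{abD}\varpi^{i-d}$. The parity mismatch between $v(a^2) = 2v(a)$ (even) and $v(b^2D) = 2v(b)+1$ (odd) yields $v(a^2-b^2D) = \min\{2v(a), 2v(b)+1\}$; a brief case split on the sign of $v(b) - v(a)$ then shows $v(\alpha) > i - c(\pi)$ for both $c(\pi) = 2k$ and $c(\pi) = 2k - 1$, forcing $\Phi^0 = 0$. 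On region (III), where $v(a) \geq v(b) + k$ and so $v(a/(bD\varpi^d)) \geq 0$, the second part of Lemma \ref{lemofcosetdecomp} applies; after absorbing the rightmost factor $\zxz{1}{-1 + a/(bD\varpi^d)}{0}{1}$ into $K_0(\varpi^{c(\pi)})$, the remaining matrix matches the second statement of Corollary \ref{cor:MCofinducedbias}, with $v(m) = v(a) - v(b) - k \geq 0$ and $v(\alpha) + c(\pi) = 1 - 2k + c(\pi) \in \{0, 1\}$, so $\min\{0, v(\alpha) + c(\pi)\} = 0 \leq v(m)$ and $\Phi^0 = 0$.

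Thus only region (I) contributes, and the integral reduces to its volume. Under $\Vol(\F^*\backslash\E^*) = 1$, the even-$v_\E$ half (representatives $1 + b'\sqrt{D}$, $b' \in O_\F$) carries measure $\frac{1}{2}$, and the odd-$v_\E$ half (representatives $a' + \sqrt{D}$, $a' \in \varpi O_\F$) is disjoint from region (I) since there $i = k - v(a') \leq k - 1 < c(\pi)$. Inside the even half, the condition $v(b') \geq c(\pi) - k$ contributes a further factor $q^{-(c(\pi)-k)}$, giving $I(\Phi,\Omega) = \frac{1}{2q^{c(\pi)-k}}$ as asserted. The main technical obstacle is the region (II) case analysis when $v(a) > v(b)$, where keeping careful track of the odd valuation contribution from $v(D) = 1$ is essential to establish $v(\alpha) > i - c(\pi)$ uniformly in the two parity cases of $c(\pi)$.
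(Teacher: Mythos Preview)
Your proposal is correct and follows exactly the route the paper intends: the paper gives no separate argument for Proposition~\ref{propIindramified} beyond ``by almost the same computation'' as Proposition~\ref{prop:inducedrepresentation}, and you have carried out precisely that adaptation, correctly tracking the shifts coming from $v(D)=1$ and $d=k-1$. One cosmetic point: at the boundary $i=0$ you invoke the second decomposition in Lemma~\ref{lemofcosetdecomp}, which is stated only for $v(bD\varpi^d/a)<0$; the first decomposition applies there, but (as in the paper's own inert-case proof) it rearranges into the same form $\zxz{\alpha}{m-\alpha}{0}{1}\zxz{1}{0}{1}{1}$ with the same $\alpha,m$, so Corollary~\ref{cor:MCofinducedbias} still gives vanishing and nothing changes.
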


\begin{cor}\label{cor:ramifiedInd}
With the same conditions as above, there exists a non-trivial test vector for any nontrivial element in $\Hom_{\E^*}(\pi\otimes\Omega,\C)$, such that it is invariant under  $K_1^1(\varpi^{k+1},\varpi^k)$ or $K_1^1(\varpi^{k},\varpi^{k-1})$, and $\E^*$ acts on it by the character $\Omega^{-1}$.
\end{cor}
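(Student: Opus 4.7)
\medskip

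\noindent\textbf{Proof proposal for Corollary \ref{cor:ramifiedInd}.}
The plan is to mirror the arguments used for Corollaries \ref{corinertsc}, \ref{cor:ramifiedsc} and \ref{cor:inertInd}: start from the explicit test vector produced by Proposition \ref{propIindramified}, form a weighted average against $\Omega$ over $\F^*\backslash \E^*$ to manufacture a vector on which $\E^*$ acts by $\Omega^{-1}$, and then invoke Lemma \ref{lem:normalsubK} to certify that the relevant principal congruence invariance survives the averaging. Non-triviality of the resulting vector will follow from the fact that the local Waldspurger integral has already been shown to be non-zero in Proposition \ref{propIindramified}.

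Concretely, let $g=\zxz{\varpi^{k-1}}{0}{0}{1}$ and set $v=\pi(g)\varphi^0$, where $\varphi^0$ is the newform of $\pi=\pi(1,\mu)$. Since $\varphi^0$ is right $K_0(\varpi^{c(\pi)})$-invariant, the vector $v$ is invariant under $gK_0(\varpi^{c(\pi)})g^{-1}$. A direct conjugation shows that this group contains $K_1^1(\varpi^{k+1},\varpi^k)$ when $c(\pi)=2k$ and $K_1^1(\varpi^k,\varpi^{k-1})$ when $c(\pi)=2k-1$; this is where the particular choice of the diagonal shift $g$ matches the congruence subgroups appearing in the statement. Denote the relevant congruence subgroup by $K$ in both cases.

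Next, define
\begin{equation*}
\varphi=\int\limits_{\F^*\backslash \E^*}\Omega(e)\,\pi(e)v\,de.
\end{equation*}
The integrand is genuinely $\F^*$-invariant because $w_\pi\cdot \Omega|_{\F^*}=1$, so the integral makes sense. For any $e_0\in \E^*$ a change of variables gives $\pi(e_0)\varphi=\Omega^{-1}(e_0)\varphi$, proving the $\E^*$-equivariance. For any $h\in K$ we compute
\begin{equation*}
\pi(h)\varphi=\int\limits_{\F^*\backslash \E^*}\Omega(e)\,\pi(e)\,\pi(e^{-1}he)\,v\,de.
\end{equation*}
The ramified case of Lemma \ref{lem:normalsubK} (applied with the appropriate value of $k$) gives $e^{-1}he\in K$, and since $v$ is $K$-invariant we conclude $\pi(h)\varphi=\varphi$. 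Finally, non-triviality: if $w$ is the vector dual to $v$ realising the matrix coefficient $\Phi$ of Proposition \ref{propIindramified}, then $\langle \varphi,w\rangle$ is precisely the local integral $I(\Phi,\Omega)$, which is non-zero by that proposition; hence $\varphi\neq 0$.

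The only non-routine step is the invariance under $K$ after averaging, and this is handled cleanly by Lemma \ref{lem:normalsubK}; the slight bookkeeping asymmetry between the parities $c(\pi)=2k$ and $c(\pi)=2k-1$ (reflected in the two possible congruence subgroups in the statement) is the only thing one must track carefully. Everything else reduces to unwinding definitions and quoting Proposition \ref{propIindramified}.
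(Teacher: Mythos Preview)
Your proposal is correct and follows exactly the approach the paper indicates (but does not spell out) before Corollary~\ref{corinertsc}: take the explicit vector from Proposition~\ref{propIindramified}, form the $\Omega$-weighted average over $\F^*\backslash\E^*$, and use the ramified case of Lemma~\ref{lem:normalsubK} to propagate the principal congruence invariance through the averaging. Your verification that $gK_0(\varpi^{c(\pi)})g^{-1}$ contains the appropriate $K_1^1$ group, and your observation that $\langle\varphi,v\rangle$ equals (a nonzero multiple of) $I(\Phi,\Omega)$---which in turn shows that $\varphi$ is nonzero on the one-dimensional space $\Hom_{\E^*}(\pi\otimes\Omega,\C)$---fill in precisely the details the paper leaves to the reader.
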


\begin{rem}\label{rem:principaltestvector}
Essentially same calculations hold when we pick smaller $d$. In that case we will get smaller value for the local integral while we can allow larger $c(\Omega)$. In particular we can provide test vector in this way for arbitrary setting of $c(\pi)$ and $c(\Omega)$ in case of principal series representation.
\end{rem}

\section{Application to mass equidistribution on nonsplit torus}
In this section we give a quick application of the results on local integral to mass equidistribution of a family of cusp forms on nonsplit torus. We shall go back to global notations.
\begin{theo}\label{them:massequi}
Let $f$ be an automorphic unitary cuspidal newform of finite conductor $N=q^c$ on $\GL_2$, with $L^2$ norm being $1$ and bounded archimedean components. Let $\E^*$ be a fixed nonsplit torus of $\GL_2$. \deleted{We assume a technical condition that locally $\E$ is not split at $p$.}  Then the mass measure associated to $f$ is equidistributed on $\E^*\backslash \A_\E^*$ as $c\rightarrow \infty$.
\end{theo}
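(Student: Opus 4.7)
The plan follows the sketch in the introduction. By Stone--Weierstrass and Weyl's equidistribution criterion, it suffices to show that for every unitary Hecke character $\Omega$ of $\E^*\backslash \A_\E^*$,
\begin{equation*}
\int\limits_{[\E^*]}|f|^2(e)\Omega(e)\,de \;=\; \langle |f|^2,1\rangle\int\limits_{[\E^*]}\Omega(e)\,de \;+\; o(1)
\end{equation*}
as the level $c\to\infty$, uniformly in compact families of $\Omega$. Spectrally decomposing $|f|^2$ on $\GL_2(\A)$ produces a constant term (the main term, equal to $1$ when $\Omega$ is trivial and irrelevant otherwise since the right-hand side then vanishes), a cuspidal sum, and a continuous (Eisenstein) integral. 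The task becomes producing a power saving for the remaining two pieces.

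For the cuspidal contribution, I would first apply the vanishing result Proposition \ref{prop:vanishinglocalint} to the inner spectral sum, after choosing as orthonormal basis of each $\pi$ a set of Kirillov-model vectors of the shape $\pi_v(\zxz{\varpi_v^d}{0}{0}{1})\varphi^0_v$ together with twisted newforms (at places where $\pi_v$ is supercuspidal these form a basis, and at the remaining places one complements appropriately). This forces $\int_{[\E^*]}\varphi(e)\Omega(e)\,de=0$ except when $d$ equals the distinguished value $k$ singled out in Sections 4--5, so that the $\varphi$-sum collapses to a small explicit family of translates of the newform per $\pi$. Combined with the constraint $\tfrac{2}{e}c(\Omega_v)<c(\pi_v)$ built into Proposition \ref{prop:vanishinglocalint}, the outer sum over $\pi$ is effectively truncated to cuspidal $\pi$ with $c(\pi_v)\ll c(\Omega_v)$ at the finite place $v$. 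On each surviving term there are two independent sources of decay: Theorem \ref{WaldsMCversion} combined with the local evaluations in Sections 4--5 produces a power saving of size roughly $q^{-c(\pi_v)/2}$ for the period $\int_{[\E^*]}\varphi(e)\Omega(e)\,de$ beyond the convexity bound for $L(\Pi\otimes\Omega,1/2)$; and for those $\pi$ with $c(\pi_v)$ bounded, the triple-product integral attached to $\langle |f|^2,\varphi\rangle$ already enjoys a power saving in $c$ by \cite{HMN16} (which is precisely the kind of decay quantified at the unramified and special-unramified places in Proposition \ref{prop:decaylocalint}). Multiplying the two bounds and summing over the short family of $\pi$ yields the required decay.

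The continuous spectrum is handled in parallel via the Plancherel decomposition into Eisenstein series $E(\cdot,s,\chi)$: the local vanishing and the decay in Proposition \ref{prop:decaylocalint} apply verbatim to Eisenstein vectors built from newforms, truncating the integral over $\chi$ to a compact range, and the $t$-integral along the critical line converges absolutely by standard bounds on Eisenstein coefficients.

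The main obstacle will be the global bookkeeping that converts the pointwise local decays into a net $o(1)$. Waldspurger's formula replaces the square of the period by a ratio involving $L(\Pi\otimes\Omega,1/2)$, $L(\pi,\mathrm{Ad},1)$, and $\langle \varphi,\varphi\rangle$, and one must invoke the convexity bound for the former together with standard lower bounds on $L(\pi,\mathrm{Ad},1)$ so that the $q^{-\delta c}$ and $q^{-c(\pi_v)/2}$ savings survive after renormalization. Because the vanishing has cut the sum to $c(\pi_v)\ll c(\Omega_v)$, the balancing is manageable and in spirit identical to that of \cite{HMN16}; the work lies in verifying that the three mechanisms (vanishing, Waldspurger local decay, and triple-product local decay) combine coherently across all places and that the tail at large archimedean parameter of the cuspidal and Eisenstein data decays fast enough to preserve the $o(1)$.
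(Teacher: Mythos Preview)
Your overall architecture is correct and matches the paper's, and you correctly arrive at the truncation $c(\pi_v)\ll c(\Omega_v)$, but the mechanism after that point is misassembled in a way that leaves a genuine gap. First, Proposition \ref{prop:vanishinglocalint} does not collapse the $\varphi$-sum inside a given $\pi$ to the single translate $d=k$. The basis $\mathcal{B}(\pi,c)$ one actually uses consists of $\pi\bigl(\zxz{\varpi^{-n}}{0}{0}{1}\bigr)\varphi^0$ with $0\le n\le c-c(\pi)$, so the translation parameter is $d=-n\le 0$, never the positive value $k$. What Proposition \ref{prop:vanishinglocalint} really says is that for every $\pi$ in its range (in particular $\tfrac{2}{e}c(\Omega_v)<c(\pi_v)$), \emph{all} of these basis vectors have vanishing Waldspurger local integral, so the entire $\pi$-contribution is zero. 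That is why the outer $\pi$-sum truncates; but for each surviving $\pi$ (bounded conductor), the inner $\varphi$-sum is not collapsed at all---it still has length $c-c(\pi)+1\asymp c$.

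Second, and more seriously, your source (a) of decay is illusory here: once $c(\pi_v)$ is bounded, the $q^{-c(\pi_v)/2}$ from Sections 4--5 is a bounded constant and contributes nothing toward $o(1)$ in $c$. The paper instead splits the remaining $\varphi$-sum on the translate parameter $n$. For $n$ large one invokes Proposition \ref{prop:decaylocalint}, which is a decay statement for the \emph{Waldspurger} local integral (not for the triple product, as your parenthetical suggests), giving $I(\Phi,\Omega)\ll q^{-\delta n}$; combined with the convexity bound in Theorem \ref{WaldsMCversion} this makes $\int_{[\E^*]}\varphi(e)\Omega(e)\,de$ small. For $n$ bounded it is the triple product $\langle |f|^2,\varphi\rangle$ that carries the saving in $c$, via \cite{HMN16}. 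Without this dichotomy on $n$ your argument does not close: you have $\asymp c$ terms with no mechanism producing decay in $c$ uniformly across them.
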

\begin{proof}
Let $\Omega$ be an fixed eigen test function (that is, a character) on $\E^*\backslash \A_\E^*$. Then the proof of the theorem amounts to show that
\begin{equation}
\int\limits_{[\E^*]}|f|^2(e)\Omega(e)de\rightarrow \int\limits_{[\E^*]}\Omega(e)de.
\end{equation}
We do a spectrum decomposition for $|f|^2$ and get
\begin{align}
&\int\limits_{[\E^*]}|f|^2(e)\Omega(e)de\\
=&\sum\limits_{\text{cusp forms $\varphi$}}<|f|^2,\varphi> \int\limits_{[\E^*]}\varphi(e)\Omega(e)de+<|f|^2,1> \int\limits_{[\E^*]}\Omega(e)de+\int\limits_{E}<|f|^2,E> \int\limits_{[\E^*]}E(e)\Omega(e)de.\notag
\end{align}
Note that $\varphi$ and $E$ must have trivial central characters.
The main term is the constant term 
$$<|f|^2,1> \int\limits_{[\E^*]}\Omega(e)de=\int\limits_{[\E^*]}\Omega(e)de$$
by normalization, which is exactly what we want. So we need to prove power saving in level aspect for both discrete spectrum and continuous spectrum. 
We only show the proof for discrete spectrum, while the proof for continuous spectrum is similar and easier. 

We can further organize the sum in $\varphi$ as follows
\begin{equation}
T_1=\sum\limits_{\pi, c(\pi)\leq c}\sum\limits_{\varphi\in \mathcal{B}(\pi,c)}<|f|^2,\varphi>\int\limits_{[\E^*]}\varphi(e)\Omega(e)de.
\end{equation}
Here 
$\mathcal{B}(\pi,N)$ is a basis of elements of $\pi$ of level up to $c$. In particular we can pick the basis to consist of a newform $\varphi^0$ and diagonal translates $\pi(\zxz{\varpi^{-n}}{0}{0}{1})\varphi^0$ for $ 0< n\leq c-c(\pi)$. 

Using Proposition \ref{prop:vanishinglocalint} and Remark \ref{rem:vanishinglocalint}, we can first reduce the sum in $\pi$ to those such that $c(\pi)\ll c(\Omega)$, since when $c(\pi)$ is large enough, the local integral is zero for test vectors in $\mathcal{B}(\pi,N)$. As a result, the sum in $\pi$ is short and it suffices to prove power saving in the sum over $\varphi$ for each fixed $\pi$.

Note that the sum in $\varphi$ has length at most $c+1\ll q^{c\epsilon}$. For each individual term, the power saving comes either from the triple product period integral $<|f|^2,\varphi>$ or Waldspurger's period integral $\int\limits_{[\E^*]}\varphi(e)\Omega(e)de$.

 In particular if $\varphi=\pi(\zxz{\varpi^{-n}}{0}{0}{1})\varphi^0$ for $n$ large enough, we can get power saving for $\int\limits_{[\E^*]}\varphi(e)\Omega(e)de$  by using the convexity bound for the L-functions in  Theorem \ref{WaldsMCversion} and power saving or vanishing results for the local integrals by Proposition \ref{prop:decaylocalint} and \ref{prop:vanishinglocalint}.
 
On the other hand if $\varphi=\pi(\zxz{\varpi^{-n}}{0}{0}{1})\varphi^0$, for $n$ small, we get power saving for $<|f|^2,\varphi>$ using the work in \cite{HMN16}.

\end{proof}

\end{document}